\newtheorem*{theorem*}{Theorem}
\newtheorem*{proposition*}{Proposition}
\newtheorem*{corollary*}{Corollary}
\newtheorem{theorem}{Theorem}[section]
\newtheorem{lemma}[theorem]{Lemma}
\newtheorem{proposition}[theorem]{Proposition}
\newtheorem{corollary}{Corollary}[theorem]
\theoremstyle{definition}
\newtheorem{definition}{Definition}
\newtheorem{example}{Example}
\theoremstyle{remark}
\newtheorem{remark}{Remark}
\newcommand \vp{\varphi}
\newcommand \id{\text{Id}}
\newcommand \diff{\mathrm{d}}
\numberwithin{equation}{section}
\newcommand \mc{\mathcal}
\newcommand \mb{\mathbb}
\DeclareMathOperator{\Hom}{Hom}
\DeclareMathOperator{\End}{End}
\DeclareMathOperator{\Pic}{Pic}
\DeclareMathOperator{\Spec}{Spec}
\DeclareMathOperator{\rank}{rank}
\DeclareMathOperator{\supp}{supp}
\DeclareMathOperator{\Aut}{Aut}
\DeclareMathOperator{\Ext}{Ext}
\DeclareMathOperator{\coker}{coker}
\DeclareMathOperator{\tr}{Tr}
\DeclareMathOperator{\chern}{ch}
\DeclareMathOperator{\Image}{im}
\title{(Co)-Higgs Bundles on Non-K\"ahler Elliptic Surfaces\footnote{MSC 2020: 32G13, 32L05, 32J15, 53D17, 53D18}}
\author{Eric Boulter\footnote{Department of Mathematics and Statistics, University of Saskatchewan, Saskatoon, Canada.  Email: eric.boulter@usask.ca. This author was partially supported by NSERC Canada Graduate Student Doctoral Award [CGSD3-534983-2019]} and Ruxandra Moraru\footnote{Department of Pure Mathematics, University of Waterloo, Waterloo, Canada. Email: moraru@uwaterloo.ca. This author was supported by NSERC Discovery Grant [DG50503-10798]}}
\begin{document}

\maketitle

\begin{abstract}
In this paper, we study Higgs and co-Higgs bundles on non-K\"ahler elliptic surfaces. We show, in particular, that non-trivial stable Higgs bundles only exist when the base of the elliptic fibration has genus at least two and use this existence result to give explicit topological conditions ensuring the smoothness of moduli spaces of stable rank-2 sheaves on such surfaces. We also show that non-trivial stable co-Higgs bundles only exist when the base of the elliptic fibration has genus 0, in which case the non-K\"ahler elliptic surface is a Hopf surface. We then given a complete description of non-trivial co-Higgs bundles in the rank 2 case; these non-trivial rank-2 co-Higgs bundles are examples of non-trivial holomorphic Poisson structures on $\mathbb{P}^1$-bundles over Hopf surfaces.
\end{abstract}

\section{Introduction}\label{sec1}

Let $X$ be a compact $n$-dimensional complex manifold endowed with a Gauduchon metric $g$ whose fundamental form is $\omega$, and let $V$ be a fixed holomorphic vector bundle on $X$. Consider a pair $(E,\phi)$ consisting of a holomorphic vector bundle $E$ on $X$ and a holomorphic section $\phi \in H^0(X,\End E \otimes V)$ such that $\phi \wedge \phi = 0$. The section $\phi$ is  called a {\em Higgs field}. The pair $(E,\phi)$ is then said to be {\em $g$-stable} if for any proper subsheaf $\mathcal{S} \subset E$ such that $\phi(\mathcal{S}) \subset \mathcal{S} \otimes V$ (that is, $\mc{S}$ is {\em $\phi$-invariant}), we have 
 \[ \frac{\deg_g(\mc{S})}{\rank \mc{S}} < \frac{\deg_g(E)}{\rank E},\]
 in which case $\phi$ is a {\em $g$-stable Higgs field}. Note that the degree of a torsion-free coherent sheaf $\mc{F}$ on a Gauduchon manifold $(X,g)$ is defined as
 \[ \deg_g(\mc{F}) = \frac{i}{2\pi}\int_X F_h \wedge \omega^{n-1},\]
 where $F_h$ is the curvature of the Chern connection of any Hermitian metric $h$ on $\det \mc{F}$ \cite{LuTe}.
 Moreover, the endomorphism bundle $\End E$ of $E$ naturally splits as $\End E \simeq  \mc{O}_X \oplus \End_0 E$,
 where the sections of $\mc{O}_X$ correspond to multiples of $\id_E$ and the sections of $\End_0 E$ are the trace-free endomorphisms of $E$. We thus have
 \[ H^0(X,\End E \otimes V) = H^0(X,V) \oplus H^0(X,\End_0 E \otimes V).\]
 Given that sections of $H^0(X,V)$ can easily be computed, we focus our analysis on the non-zero trace-free Higgs fields, that is, sections $\phi \in H^0(X,\End_0 E \otimes V)$ with $\phi \neq 0$ and $\phi \wedge \phi = 0$, which we call {\em non-trivial}. Furthermore, if $E$ is a line bundle, then $\End E = \mathcal{O}_X$ and $H^0(X,\End E \otimes V) = H^0(X,V)$ so that $H^0(X,\End_0 E \otimes V) = \{ 0 \}$. We therefore only study non-trivial {\em $V$-pairs} $(E,\phi: E \rightarrow E \otimes V)$ for holomorphic vector bundles $E$ of rank $\geq 2$.
 
In this paper, we are interested in the case $V = \mc{T}_X$, $\mc{T}^*_X$ or $K_X$, where $\mc{T}_X$ and $\mc{T}^*_X$ are, respectively, the holomorphic tangent and cotangent bundles of $X$, and $K_X$ is its canonical bundle. If $V = \mc{T}_X$, the pairs $(E,\phi)$ are called {\em co-Higgs bundles} and correspond to generalised holomorphic vector bundles over the complex manifold $X$ \cite{Gualtieri,Hit-B,Rayan}. Co-Higgs bundles also play an important role in holomorphic Poisson geometry as they provide higher-dimensional examples of compact holomorphic Poisson manifolds. To be precise, given a rank-2 co-Higgs bundle $(E,\phi)$ on $X$, if the Higgs field $\phi$ is non-zero and trace-free, it induces a non-trivial holomorphic Poisson structure on $\mathbb{P}(E)$  \cite{Polishchuk, Rayan}. More generally, if $E$ has rank $r 
 \geq 3$ and $\phi$ satisfies an integrability condition stronger than $\phi \wedge \phi = 0$, then $\phi$ also induces a holomorphic structure on $\mathbb{P}(E)$ \cite{Matviichuk}. Stable rank-2 co-Higgs bundles have been studied on curves \cite{Rayan, Rayan-P1} and on certain projective surfaces \cite{Rayan-P2, Vicente, VicenteColmenares_Moduli, Biswas-Rayan}. Whereas examples of strongly integrable co-Higgs bundles of rank $\geq 3$ exist on projective space \cite{Matviichuk}.  

On the other hand, if $V = \mc{T}^*_X$ or $K_X$, the pairs $(E,\phi)$ are called {\em Higgs bundles} or {\em Vafa-Witten pairs}, respectively. Higgs bundles were first introduced by Hitchin \cite{Hit-Higgs} on algebraic curves and generalised to higher dimensional varieties by Simpson \cite{Simpson}. They also correspond to a special class of solutions of the Kapustin-Witten equations on compact K\"ahler surfaces \cite{Tanaka}. Although Higgs bundles over curves have been extensively studied over the past thirty-five years, less is known in the higher dimensional case. Some results have nonetheless been obtained in higher dimensions by Biswas, Bottacin and Biswas-Mj-Verbitsky \cite{Biswas2, Biswas, Bottacin-Higgs,BiswasVerbitsky}, among others. Moreover, stable Vafa-Witten pairs correspond to solutions of the Vafa-Witten equations \cite{VafaWitten} via the Kobayashi-Hitchin correspondence \cite{LinThesis, GarciaPrada, Tanaka}. Their moduli have been studied over projective K3 surfaces \cite{TanakaThomasI, TanakaThomasII, Kool}, and explicit examples have been constructed over projective K3 surfaces \cite{TanakaThomasI} and properly elliptic surfaces \cite{MarchesanoMoraruSavelli}.
 
 In this article, we consider trace-free co-Higgs and Higgs bundles over non-K\"ahler elliptic surfaces, which correspond to non-trivial principal bundles over a complex curve $B$ with structure group an elliptic curve. We first prove that non-trivial stable co-Higgs bundles only exist if the genus of $B$ is zero, in which case $X$ is a Hopf surface, and non-trivial stable Higgs bundles only exist if the genus of $B$ is greater than or equal to 2. This is done in sections \ref{Existence} and \ref{elliptic}.
 Note that this is consistent with the case of curves: it is known that on a curve $C$ of genus $g(C)$, non-trivial stable Higgs bundles only exist if $g(C) \geq 2$ \cite{Hit-Higgs}, whereas non-trivial stable co-Higgs bundles only exist if $g(C)=0$ \cite{Rayan}. The result for non-K\"ahler elliptic surfaces is, however, a consequence of the following more general result:
 
 \begin{proposition*}
 Let $(X,g)$ be a Gauduchon manifold such that $\mc{T}_X$ (resp. $\mc{T}_X^*$) is given by an extension of vector bundles $$0 \rightarrow V_1 \stackrel{i}{\rightarrow}  \mc{T}_X \stackrel{p}{\rightarrow}  V_2 \rightarrow 0.
$$
If there are no non-trivial $g$-stable pairs $(E, \vp_1:E \to E\otimes V_1)$ or $(E, \vp_2:E \to E\otimes V_2)$, then $X$ does not admit non-trivial $g$-stable co-Higgs bundles (resp. Higgs bundles).
 \end{proposition*}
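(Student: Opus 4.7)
The plan is to argue by contradiction. Suppose $(E,\phi)$ is a non-trivial $g$-stable co-Higgs bundle on $X$; I aim to construct either a non-trivial $g$-stable $V_1$-pair or a non-trivial $g$-stable $V_2$-pair, contradicting the hypothesis. The key auxiliary object is the composite
\[ \phi_2 := (\id_E \otimes p) \circ \phi \colon E \longrightarrow E \otimes V_2, \]
which is trace-free because the trace map on $\End E \otimes \mc{T}_X$ intertwines with the bundle map $\id_E \otimes p$. I then split into two cases according to whether $\phi_2$ vanishes.

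If $\phi_2 = 0$, then $\phi$ factors uniquely as $\phi = (\id_E \otimes i) \circ \phi_1$ for some $\phi_1 \colon E \to E \otimes V_1$. Because $i$ is injective, $\phi_1$ is non-zero and trace-free. Moreover, applying $\id_E \otimes i$ to the containment $\phi_1(\mc S) \subset \mc S \otimes V_1$ shows that any $\phi_1$-invariant subsheaf $\mc S \subset E$ is automatically $\phi$-invariant, so the slope inequality from the $g$-stability of $(E,\phi)$ transfers verbatim to $(E,\phi_1)$. This yields a non-trivial $g$-stable $V_1$-pair, contradicting the hypothesis.

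If $\phi_2 \neq 0$, then $(E,\phi_2)$ is a non-trivial $V_2$-pair, and the aim is to show that it is $g$-stable---which would again contradict the hypothesis. The main obstacle here is that $\phi_2$-invariance of $\mc S \subset E$, namely $\phi(\mc S) \subset \mc S \otimes \mc T_X + E \otimes V_1$, is strictly weaker than $\phi$-invariance, so $g$-stability does not transfer automatically. Suppose, for contradiction, that $\mc S \subsetneq E$ is a saturated $\phi_2$-invariant subsheaf with $\deg_g(\mc S)/\rank \mc S \geq \deg_g(E)/\rank E$. By the $g$-stability of $(E,\phi)$, $\mc S$ cannot be $\phi$-invariant, so the composition $\mc S \hookrightarrow E \xrightarrow{\phi} E \otimes \mc T_X \twoheadrightarrow (E/\mc S) \otimes \mc T_X$ is non-zero and, by the $\phi_2$-invariance, factors as a non-zero morphism $\beta \colon \mc S \to (E/\mc S) \otimes V_1$. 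The plan is to exploit this off-diagonal map $\beta$---together with the integrability relation $\phi \wedge \phi = 0$ applied to the block-triangular form of $\phi$ relative to $\mc S$---to manufacture a non-trivial $g$-stable $V_1$-pair on a suitable subquotient of $E$, again contradicting the hypothesis. The Higgs case follows by the identical argument with $\mc T_X^*$ in place of $\mc T_X$.
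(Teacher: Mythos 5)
Your handling of the case $\phi_2=0$ is complete and is exactly the paper's argument: $\phi$ factors through $\id_E\otimes i$, and $\phi_1$-invariance of a subsheaf implies $\phi$-invariance, so stability transfers and you obtain a non-trivial stable $V_1$-pair. The problem is the case $\phi_2\neq 0$, which is where the content of the proposition lives, and there your text is a plan rather than a proof. You never exhibit the object that is supposed to contradict the hypothesis: $\beta\colon\mc{S}\to(E/\mc{S})\otimes V_1$ is a morphism between two different sheaves, not a $V_1$-valued endomorphism of anything, so ``a non-trivial $g$-stable $V_1$-pair on a suitable subquotient of $E$'' names neither the subquotient, nor the Higgs field on it, nor the reason it would be stable; and the role of $\phi\wedge\phi=0$ is never made concrete. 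As written, the hardest case is unproven.

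For comparison, the paper settles this case in one line: it asserts that any $\vp_2$-invariant subsheaf $P$ satisfies $\phi(P)\subseteq P\otimes V$, i.e.\ is $\phi$-invariant, so that $\mu(P)<\mu(E)$ follows from stability of $(E,\phi)$; hence $(E,\vp_2)$ is stable, the hypothesis forces $\vp_2=0$, and one is reduced to your first case. You are right that this implication is not formal: $\vp_2(P)\subseteq P\otimes V_2$ only gives $\phi(P)\subseteq P\otimes V+E\otimes V_1$, and the failure of $\phi$-invariance is measured precisely by your $\beta$. So you have put your finger on a genuine subtlety that the paper's proof passes over silently, rather than missing an easy step. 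But identifying the difficulty is not resolving it: to finish along your lines you would need either to show that $\beta=0$ for a saturated destabilizing $\mc{S}$, or to actually extract a non-trivial stable $V_1$- or $V_2$-pair from $\beta$ and the integrability relation. Neither is carried out, so the proposal does not yet prove the proposition.
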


 In section \ref{Hopf}, we classify all non-trivial rank-2 co-Higgs bundles on Hopf surfaces. The results we obtain can be summarised as follows:
 
\begin{theorem*}
Let $X$ be a Hopf surface. 
\begin{enumerate}
    \item 
    For any integer $c_2\geq 0$, there is a non-trivial stable rank-2 co-Higgs bundle $(E,\phi)$ on $X$ such that $c_2(E)=c_2$, in which case $E$ is a filtrable bundle.
    \item 
    The moduli space $\mathcal{M}^{st}_{coH,0}(X)$ of non-trivial stable rank-2 co-Higgs bundles  on $X$ with second Chern class $c_2=0$ is a 5-dimensional variety with two disjoint components $\mathcal{Z}_1$ and $\mathcal{Z}_2$ given by
\[ \mathcal{Z}_1 \simeq \Pic(X) \times \Spec \left(\mathbb{C}[s,z, v,w]/(w(4zv - s^2) - 1)\right) \times \mathbb{P}^1\]
and
\[ \mathcal{Z}_2 \simeq \Pic(X) \times \mathbb{C}^3 \times \mathbb{P}^1.\]
Moreover, $\mathcal{M}^{st}_{coH,0}(X)$ is a codimension-1 subvariety of the moduli space of co-Higgs bundles on $\mathbb{P}^1$.
\end{enumerate}
\end{theorem*}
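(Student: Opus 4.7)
The plan is to exploit the elliptic fibration $\pi: X \to \mathbb{P}^1$ of the Hopf surface and the induced exact sequence
\begin{equation*}
0 \to \mc{O}_X \to \mc{T}_X \to \pi^* \mc{O}_{\mathbb{P}^1}(2) \to 0
\end{equation*}
to reduce the classification of co-Higgs fields on $X$ to data on $\mathbb{P}^1$ (the horizontal part) together with sections of line bundles on $X$ (the vertical part). Combined with the fact that every rank-$2$ holomorphic bundle on a Hopf surface is filtrable, this will produce an explicit description of the moduli.

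\emph{Part 1}. A filtrable $E$ with $c_2(E) = c_2$ fits into an extension
\begin{equation*}
0 \to L_1 \to E \to L_2 \otimes \mc{I}_Z \to 0
\end{equation*}
with $L_1, L_2 \in \Pic(X)$ and $Z$ a zero-dimensional subscheme of length $c_2$. To produce stable examples for arbitrary $c_2 \geq 0$, I would choose $L_1, L_2$ making $E$ Gauduchon-stable as a bundle (so the pair is automatically stable for every $\phi$), and then produce a non-zero Higgs field $\phi$ by lifting a stable rank-$2$ co-Higgs field from $\mathbb{P}^1$ (known to exist in the genus-$0$ case) through the surjection $\mc{T}_X \twoheadrightarrow \pi^* \mc{O}(2)$.

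\emph{Part 2}. With $c_2 = 0$, the extension reduces to $0 \to L_1 \to E \to L_2 \to 0$, so after splitting off the $\Pic(X)$-factor recording $\det E$ it remains to classify traceless Higgs fields $\phi \in H^0(X, \End_0 E \otimes \mc{T}_X)$. Applying $H^0(\End_0 E \otimes -)$ to the tangent sequence yields
\begin{equation*}
0 \to H^0(\End_0 E) \to H^0(\End_0 E \otimes \mc{T}_X) \xrightarrow{p_*} H^0(\End_0 E \otimes \pi^* \mc{O}(2)),
\end{equation*}
and the two components arise from the dichotomy $p_*\phi \neq 0$ versus $p_*\phi = 0$. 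In $\mathcal{Z}_1$, descending $p_*\phi$ to $\mathbb{P}^1$ via $\pi_*$ presents it (up to conjugation) as a traceless matrix $\bigl(\begin{smallmatrix} s & z \\ v & -s \end{smallmatrix}\bigr)$ of sections of $\mc{O}_{\mathbb{P}^1}(2)$; stability forces the discriminant $4zv - s^2$ to be nowhere vanishing, which after inverting by $w$ produces the coordinate ring $\mathbb{C}[s, z, v, w]/(w(4zv - s^2) - 1)$. The extra $\mathbb{P}^1$-factor records the projective class of lifts of $p_*\phi$ through the kernel $\mc{O}_X \hookrightarrow \mc{T}_X$. In $\mathcal{Z}_2$, vanishing of the horizontal part forces $\phi$ to come from $\End_0 E$ itself, which is non-zero only for special $L_1, L_2$ (identifying $E$ up to finitely many parameters) and yields the remaining $\mathbb{C}^3 \times \mathbb{P}^1$ factors.

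The codimension-$1$ claim comes from the map $\phi \mapsto p_*\phi$, which embeds $\mathcal{M}^{st}_{coH,0}(X)$ into the moduli of rank-$2$ co-Higgs bundles on $\mathbb{P}^1$ as precisely those whose lifting obstruction to $\mc{T}_X$ vanishes --- a single hypersurface condition. The hardest step will be carefully matching the stability analysis on $X$ with the algebraic condition $4zv - s^2 \neq 0$: showing that the existence of a destabilising $\phi$-invariant line subsheaf is equivalent to the degeneracy of this quadratic, while simultaneously handling the integrability condition $\phi \wedge \phi = 0$, which on a surface is a non-trivial commutation relation between the two matrix components of $\phi$.
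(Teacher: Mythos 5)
There are genuine gaps in your proposal, and two of them are fatal to the argument as written. First, your reduction rests on the claim that ``every rank-$2$ holomorphic bundle on a Hopf surface is filtrable.'' This is false: on non-algebraic surfaces (and Hopf surfaces in particular) non-filtrable rank-$2$ bundles exist for every $c_2>0$, and the clause ``in which case $E$ is a filtrable bundle'' in Part 1 is precisely a statement that has to be \emph{proved}, not assumed. The paper does this (Proposition \ref{non-filt-co-Higgs}) by showing that for a non-filtrable $E$ the sheaf $N=\pi_*(\End_0 E)$ is a line bundle on $\mathbb{P}^1$ of degree $-4\Delta(E)=-2c_2<0$, so $h^0(X,\End_0E(T))=h^0(\mathbb{P}^1,N(1))=0$; nothing in your outline replaces this step. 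Second, your description of the two components is wrong. You propose that $\mathcal{Z}_2$ consists of pairs with vanishing horizontal part $p_*\phi=0$, i.e.\ $\phi\in H^0(X,\End_0E)$; but by Proposition \ref{endo} a $g$-stable pair with $\phi\in H^0(X,\End E)$ has $\phi=\lambda\,\id_E$, so a trace-free stable $\phi$ with $p_*\phi=0$ is zero. In the paper the two components correspond instead to the two possible underlying bundles $E=L\oplus L$ and $E=L\oplus L(-T)$, and establishing that these are the \emph{only} bundles with $c_2=0$ admitting a stable trace-free Higgs field (Corollary \ref{hopf-filt-not-stable}, via the $\phi$-invariance of maximal destabilising bundles in all other cases) is a substantial step absent from your outline.

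Beyond these two points, the step you yourself flag as hardest --- the integrability condition $\phi\wedge\phi=0$ --- is exactly where the paper's key lemma lives, and without it neither the $\mathbb{P}^1$ factors nor the codimension-$1$ claim can be obtained. Since $\mathcal{T}_X\simeq\mathcal{O}_X(T)\oplus\mathcal{O}_X(T)$ splits, a local computation (Lemma \ref{hopf-integrability}) shows that any integrable trace-free $\phi$ has the form $(a\phi_0,b\phi_0)$ with $\phi_0\in H^0(X,\End_0E(T))$, i.e.\ its two components are \emph{proportional}; the $\mathbb{P}^1$ in each component of the moduli space is the ratio $[a:b]$, and the codimension-$1$ statement is the drop from the full space of pairs of matrices to the proportional ones. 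Your $\mathbb{P}^1$ (``projective class of lifts through $\mathcal{O}_X\hookrightarrow\mathcal{T}_X$'') is not this, and your reading of $4zv-s^2$ as a nowhere-vanishing discriminant of a quadratic in sections of $\mathcal{O}_{\mathbb{P}^1}(2)$ conflates the moduli on $\mathbb{P}^1$ with the moduli on $X$: in the paper $\phi_0=A_1x+A_2y$ is a matrix of sections of $\mathcal{O}_{\mathbb{P}^1}(1)$, and $4zv-s^2$ is the single number $\det([A_1,A_2])$, whose non-vanishing is equivalent to $A_1,A_2$ having no common eigenvector, i.e.\ to stability.
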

As a direct consequence of this theorem, we also have:
\begin{corollary*}
There exist non-trivial holomorphic Poisson structures on $\mathbb{P}(E)$ for filtrable rank-2 bundles $E$ on Hopf surfaces.
\end{corollary*}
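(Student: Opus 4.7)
The plan is to observe that this corollary is essentially an immediate consequence of part (1) of the Theorem combined with the Polishchuk--Rayan correspondence between trace-free rank-2 co-Higgs bundles and Poisson structures on their projectivisations, which was already recalled in the Introduction.

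More precisely, I would proceed as follows. First, invoke part (1) of the Theorem to produce, for any fixed $c_2 \geq 0$, a non-trivial stable rank-2 co-Higgs bundle $(E,\phi)$ on the Hopf surface $X$ such that $E$ is filtrable. By the convention fixed at the start of the Introduction, ``non-trivial'' means that $\phi \in H^0(X, \End_0 E \otimes \mc{T}_X)$ is non-zero and trace-free, with $\phi \wedge \phi = 0$. Second, apply the result of Polishchuk and Rayan (\cite{Polishchuk, Rayan}) cited in the Introduction, which asserts that any non-zero trace-free rank-2 co-Higgs field $\phi$ on $X$ induces a non-trivial holomorphic Poisson structure on $\mathbb{P}(E)$. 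This yields the desired Poisson structure on $\mathbb{P}(E)$ for a filtrable rank-2 bundle $E$ on $X$.

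There is essentially no obstacle in this argument beyond what has already been established: the existence of a suitable $(E,\phi)$ with $E$ filtrable is precisely the content of part (1) of the Theorem, and the passage from $(E,\phi)$ to a Poisson structure is a known construction. The only point worth underlining is that the Poisson structure obtained is non-trivial because the Higgs field $\phi$ itself is non-zero and trace-free, which is again guaranteed by the non-triviality assumption built into the Theorem. Hence no further computation is required.
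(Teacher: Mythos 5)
Your argument is exactly the paper's: the corollary is stated there as a direct consequence of part (1) of the Theorem (which supplies a non-trivial stable rank-2 co-Higgs bundle with filtrable underlying bundle for every $c_2\geq 0$) together with the Polishchuk--Rayan fact that a non-zero trace-free rank-2 Higgs field induces a non-trivial holomorphic Poisson structure on $\mathbb{P}(E)$. No gaps; nothing further is needed.
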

A complete classification of which projective bundles $\mathbb{P}(E)$ admit holomorphic Poisson structures is given in Proposition \ref{hopf-reg}. These are examples of three-dimensional holomorphic Poisson manifolds.
 
Finally, we study rank-2 Higgs bundles on non-K\"ahler elliptic surfaces in section \ref{geq2}. All of the results in this section also correspond to results on Vafa-Witten pairs because, for these surfaces, there is a one-to-one correspondence between Higgs bundles and Vafa-Witten pairs. We show, in particular, that when the genus of the base curve is at least two and $\delta \in \Pic(X)$ and $c_2 \in \mathbb{Z}$ are chosen to be in the filtrable range, there always exist non-trivial stable rank-2 Higgs pairs $(E,\phi)$ with $\det(E)=\delta$ and $c_2(E)=c_2$. In the non-filtrable range, we relate the existence of non-trivial Higgs fields on a bundle $E$ to a line bundle determined by the ramification data of the spectral curve of $E$. Using this result, we obtain bounds on invariants of $E$ such that the existence of non-trivial Higgs pairs is guaranteed. 

By standard deformation theory results, there is a correspondence between non-trivial Vafa-Witten pairs $(E,\phi)$ with $E$ stable and the obstruction class $H^2(X,\End_0(E))$ to deformations of $E$  on a complex surface $X$, so we use our results from Section \ref{geq2} to make conclusions about the smoothness of moduli spaces of stable bundles on non-K\"ahler elliptic surfaces. Note that moduli spaces of stable bundles on non-K\"ahler elliptic surfaces were studied in \cite{BrMor2}, where the authors showed that the moduli spaces are always smooth when the base curve of the elliptic fibration has genus 0 or 1 \cite[Proposition 4.1]{BrMor2}. They also showed that the locus of regular bundles is smooth for a base curve of genus $\geq 2$ when $\Delta(2, \delta, c_2)>(g-1)/2$ \cite[Proposition 4.2]{BrMor2}. By also considering the invariant $e_\delta$ associated to the determinant $\delta$ of $E$ defined in Section \ref{Line bundles}, we can improve these results to the following:
\begin{theorem*}
    Let $\pi:X\to B$ be a non-K\"ahler principal elliptic surface with $g(B)=g\geq 2$ and let $\mc{M}_{2,\delta,c_2}(X)$ be the moduli space of stable rank-2 vector bundles on $X$ with determinant $\delta$ and second Chern class $c_2$.
    \begin{enumerate}
        \item If $\Delta(2,\delta,c_2)\geq m(2,\delta)$, then there are non-trivial stable Vafa-Witten pairs $(E,\phi)$ with $\rank(E)=2$, $\det(E)=\delta$, and $c_2(E)=c_2$, so $\mc{M}_{2,\delta,c_2}(X)$ is not smooth as a ringed space.
        \item If $4\Delta(2,\delta,c_2)=-e_\delta<g-1$, then there are non-trivial stable Vafa-Witten pairs $(E,\phi)$ with $\rank(E)=2$, $\det(E)=\delta$, and $c_2(E)=c_2$, so $\mc{M}_{2,\delta,c_2}(X)$ is not smooth as a ringed space.
        \item If $g\geq 3$ and $e_\delta>2-g$, then there are non-trivial stable Vafa-Witten pairs $(E,\phi)$ with $\rank(E)=2$, $\det(E)=\delta$, and $c_2(E)=c_2$, so $\mc{M}_{2,\delta,c_2}(X)$ is not smooth as a ringed space.
        \item If $g=2$, $e_\delta=-2$, and $1/2<\Delta(2,\delta,c_2)<m(2,\delta)$, then there are no non-trivial stable Vafa-Witten pairs $(E,\phi)$ with $\rank(E)=2$, $\det(E)=\delta$, and $c_2(E)=c_2$, so $\mc{M}_{2,\delta,c_2}(X)$ is smooth.
    \end{enumerate}
    (Note that there is overlap in the hypotheses of points 1-3.)
\end{theorem*}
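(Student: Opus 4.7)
The plan is to reduce the smoothness question to the existence or non-existence of non-trivial stable Vafa-Witten pairs, and then invoke the results of Section \ref{geq2} flagged in the introduction. By standard Kuranishi deformation theory, the moduli space $\mc{M}_{2,\delta,c_2}(X)$ is smooth as a ringed space if and only if at every stable $[E]\in \mc{M}_{2,\delta,c_2}(X)$ the obstruction space $H^2(X,\End_0 E)$ vanishes. Since $X$ is a complex surface, Serre duality identifies this obstruction with $H^0(X,\End_0 E\otimes K_X)^*$, whose elements are exactly the trace-free Vafa-Witten Higgs fields on $E$; the integrability condition $\phi\wedge\phi=0$ is automatic because $V=K_X$ is a line bundle. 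Hence smoothness at $[E]$ is equivalent to the absence of a non-trivial Vafa-Witten pair with underlying bundle $E$, and the four statements translate into existence or non-existence of non-trivial stable Vafa-Witten pairs with the prescribed invariants $\delta$ and $c_2$.

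With this reduction, each part of the theorem is obtained from the corresponding result of Section \ref{geq2}. Part (1) is the filtrable range $\Delta(2,\delta,c_2)\geq m(2,\delta)$: existence of non-trivial stable rank-$2$ Higgs pairs is produced there by an explicit extension construction along the elliptic fibration, and this transports to Vafa-Witten pairs via the Higgs/Vafa-Witten correspondence available on non-K\"ahler principal elliptic surfaces, so non-smoothness follows from the deformation-theoretic reduction. Parts (2) and (3) lie in the non-filtrable range. Here Section \ref{geq2} identifies non-trivial Higgs fields on $E$ with non-zero sections of a line bundle on the spectral cover of $E$, whose degree is controlled by $e_\delta$, $\Delta(2,\delta,c_2)$ and $g$; the inequalities $4\Delta=-e_\delta<g-1$ in (2) and $e_\delta>2-g$ with $g\geq 3$ in (3) are precisely the bounds that force this line bundle to be effective, producing the desired pair and again the obstruction via Serre duality.

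Part (4) runs in the opposite direction: in the narrow window $g=2$, $e_\delta=-2$, $1/2<\Delta(2,\delta,c_2)<m(2,\delta)$, the numerical analysis of Section \ref{geq2} shows that the line bundle on the spectral cover encoding ramification data has non-positive degree on every relevant component, so it admits no non-zero section. Equivalently $H^0(X,\End_0 E\otimes K_X)=0$ for every stable $E$ with these invariants, the obstruction vanishes, and $\mc{M}_{2,\delta,c_2}(X)$ is smooth. The main obstacle, inherited from Section \ref{geq2}, is the non-filtrable case: one must turn the relative spectral correspondence over $\pi\colon X\to B$ into sharp numerical control of the degree of the ramification line bundle in terms of $e_\delta$ and $\Delta$, and then verify that the cutoffs between regimes (2)-(3) and the boundary case (4) are handled consistently so that the tetrachotomy above exhausts the possibilities up to the noted overlap.
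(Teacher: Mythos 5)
Your proposal follows essentially the same route as the paper: the deformation-theoretic reduction via Serre duality ($H^2(X,\End_0 E)\cong H^0(X,\End_0 E\otimes K_X)^*$, with integrability automatic since $K_X$ is a line bundle) is exactly the paper's argument for Theorem \ref{not-smooth}; part (1) is the paper's explicit filtrable construction (Proposition \ref{filt} and Corollary \ref{cor-filt}); parts (2)--(3) are the degree bounds on $N=\pi_*(\End_0 E)$ --- which, to be precise, is a line bundle on the base $B$ with $N^2\simeq\mc{O}_B(-R)$ determined by the ramification of the spectral cover, not a line bundle on the spectral cover itself --- as in Propositions \ref{non-filt-reg} and \ref{non-filt}; and part (4) is the paper's genus-2, $e_\delta=-2$ case analysis. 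The only small correction: for part (4) you need $\deg(N\otimes K_B)$ \emph{strictly} negative (which the paper's computation does give for every $\Delta>1/2$, with or without jumps), since ``non-positive degree'' alone would not rule out sections of a trivial degree-zero bundle.
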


We finish the paper by describing an algorithm to compute all Higgs bundles in the non-filtrable range on an elliptic surface with base of genus two, given an explicit description of the surface.

\section{A Necessary condition for trace-free Higgs fields}\label{Existence} 

Let $X$ be a compact $n$-dimensional complex manifold with Gauduchon metric $g$ and $V$ be a fixed holomorphic vector bundle on $X$. 

\subsection{Degree and stability}
The degree of a holomorphic vector bundle can be defined on any compact complex manifold $X$. Recall that a Hermitian metric $g$ on $X$ is called {\em Gauduchon} if its fundamental form $\omega$ satisfies $\partial \Bar{\partial} \omega^{n-1} = 0$, where $n = \dim_{\mathbb{C}} X$ \cite{Gauduchon}. Such metrics always exist on a compact complex manifold. Let $L$ be a holomorphic line bundle on $X$. The {\em degree of $L$ with respect to $g$} is defined by
\[ \deg_g L := \frac{i}{2\pi}\int_X F \wedge \omega^{n-1},\]
where $F$ is the curvature of a Hermitian connection on $L$ compatible with its holomorphic structure. Any two such forms differ by a form that is $\partial \Bar{\partial}$-exact. Nonetheless, since $\omega$ satisfies $\partial \Bar{\partial} \omega^{n-1} = 0$, the degree is independent of the choice of connection and is therefore well-defined. We should point out that although this definition agrees with the usual topological degree when the metric $g$ is K\"ahler (so that $d\omega = 0$), this degree is not in general a topological degree because it can take values in a continuum. This is indeed the case when $X$ is a non-K\"ahler principal elliptic bundle (see section \ref{Line bundles}).

More generally, we define the {\em degree} of a torsion-free coherent sheaf $\mathcal{E}$ on $X$ by
\[ \deg_g(\mathcal{E}) := \deg_g(\det \mathcal{E}),\]
where $\det \mathcal{E}$ is the determinant line bundle of $\mathcal{E}$, and the {\em slope of $\mathcal{E}$} by 
\[ \mu_g(E) := \deg_g(\mathcal{E})/\rank(\mathcal{E}).\]
We therefore have a notion of slope-stability for sheaves on $X$ with respect to the Gauduchon metric $g$:
\begin{definition}
A torsion-free coherent sheaf $\mathcal{E}$ on $X$ is {\em $g$-(semi)-stable} if and only if $\mu_g(\mathcal{S}) < (\leq) \mu_g(\mathcal{E})$ for every coherent subsheaf $\mathcal{S} \subset \mathcal{E}$ with $0 < \rank(\mathcal{S}) < \rank(\mathcal{E})$.
\end{definition}

\begin{remark}
All line bundles are stable. Moreover, for rank-2 vector bundles on surfaces, it is enough to check stability with respect to line bundles. Finally, stable vector bundles are {\em simple}, that is, $H^0(M,\End(E)) = \mathbb{C}$.
\end{remark}

%%%%%%%%% Eric's thesis
We also define slope stability for {\em $V$-pairs}, which consist of pairs $(E,\phi)$ made up of a holomorphic vector bundle $E$ on $X$ and a holomorphic section $\phi \in H^0(X,\End E \otimes V)$, called a \emph{Higgs field}, such that $\phi \wedge \phi \in H^0(X,\End E \otimes \wedge^2 V)$ vanishes identically. 
In the special cases where $V=\mc{T}_X$, $V=K_X$, or $V=\mc{T}_X^*$, the $V$-pairs are called \emph{co-Higgs bundles}, \emph{Vafa--Witten pairs}, or \emph{Higgs bundles}, respectively. 
\begin{definition}
Let $(E, \phi)$ be a $V$-pair on $X$. 
A subsheaf $\mc{F}\subseteq E$ is called \emph{$\phi$-invariant} if $\phi(\mc{F})\subseteq \mc{F}\otimes V$. Moreover, the $V$-pair $(E,\phi)$ is said to be \emph{$g$-(semi)-stable} if for any $\phi$-invariant subsheaf $\mc{F}$ of $E$ with $0<\rank{\mc{F}}<\rank{E},$
\[\mu_g(\mc{F})<(\leq) \mu_g(E).\]
We equivalently say that $\phi$ is a \emph{$g$-(semi)-stable Higgs field} for $E$.
\end{definition}
\begin{example}
    If $E$ is a $g$-(semi)-stable bundle on $X$, then any Higgs field $\phi:E \to E\otimes V$ is automatically $g$-(semi)-stable.
\end{example}
Note that if $(E,\phi)$ is a $V$-pair and $\phi=\id_E\otimes \sigma$ for some holomorphic section $\sigma$ of $V$, then $(E,\phi)$ is $g$-(semi)-stable if and only if $E$ is. 
Since $V$-pairs of this type are not interesting from a classification point of view, we typically restrict to $V$-pairs that are trace-free; 
here, we are defining the trace by 
\[\tr\left( \sum\limits_{k=1}^m A_k\otimes \sigma_k \right) :=\sum\limits_{k=1}^m\tr{A_k}\otimes \sigma_k,\] 
where $A_k$ and $\sigma_k$ are local sections of $\End(E)$ and $V$, respectively. Moreover, when $V = T_X$, trace-free Higgs fields give rise to holomorphic Poisson structures on $\mathbb{P}(E)$ when they are {\em strongly integrable} \cite{Polishchuk,Rayan,Matviichuk} (where the strong integrability condition boils down to $\phi \wedge \phi = 0$ when $E$ has rank 2 so that Higgs fields on $E$ correspond to holomorphic Poisson structures on $\mathbb{P}(E)$).
\begin{example}
Let $V$ be any positive-degree $g$-stable holomorphic vector bundle on $X$. 
Although $\mc{O}_X\oplus V$ is not a $g$-stable (or even $g$-semi-stable) vector bundle, 
\[\left(\mc{O}_X\oplus V, \phi:=\begin{bmatrix}0 & 1\\ \alpha & 0\end{bmatrix}:\mc{O}_X\oplus V\to V\oplus (V\otimes V)\right)\]
is a $g$-stable trace-free $V$-pair for any $\alpha \in H^0(X, V\otimes V)$, as any non-trivial $\phi$-invariant subsheaf contains $\mc{O}_X\oplus \mathrm{Im}(\alpha)$ and thus has slope strictly smaller than $\mu_g(\mc{O}_X\oplus V)$.
\end{example}
%%%%%%%%%%%%%%%%%%%%%%%%%%%%%%%%%%%%%%

\subsection{Stable pairs}

In this section, we derive some facts about $V$-pairs $(E,\phi)$ on $X$, where $E$ is a holomorphic vector bundle on $X$ and $\phi \in H^0(X,\End E \otimes V)$. We first consider the case where $V$ is a line bundle.

\begin{proposition}\label{tracefree}
Let $(X,g)$ be a compact Gauduchon manifold. If $V$ is a line bundle on $X$ and $(E,\phi: E \rightarrow E \otimes V)$ is a $g$-stable pair with $\phi \neq 0$, then $\deg V \geq 0$.
\end{proposition}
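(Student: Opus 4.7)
The plan is to produce from $\phi$ two $\phi$-invariant subsheaves of $E$, apply the stability hypothesis, and handle the degenerate case where $\phi$ is injective by a separate determinant computation.

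Write $r=\rank E$, set $K:=\ker\phi\subset E$ and $I:=\phi(E)\subset E\otimes V$, which sit in a short exact sequence $0\to K\to E\to I\to 0$. The kernel $K$ is tautologically $\phi$-invariant. Next, $\widetilde I := I\otimes V^{-1}$ embeds as a subsheaf of $E$ via the canonical identification $E\otimes V\otimes V^{-1}\cong E$; on any local trivialization of the line bundle $V$, the Higgs field becomes a genuine endomorphism of $E$ whose image is $\widetilde I$, so $\widetilde I$ is automatically $\phi$-invariant as well.

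Suppose first that $K=0$. Then $\phi$ is injective as a morphism of sheaves, so $\det\phi \colon \det E \to \det E \otimes V^r$ is a nonzero map of line bundles, equivalently a nonzero global section of $V^r$. On a compact Gauduchon manifold, any line bundle admitting a nonzero holomorphic section has nonnegative $g$-degree (the zero locus is an effective divisor, whose pairing with $\omega^{n-1}$ is nonnegative), so $r\deg_g V\ge 0$ and hence $\deg_g V\ge 0$.

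Now suppose $K\neq 0$. Then $K$ is a proper nonzero $\phi$-invariant subsheaf — properness because $K=E$ would force $\phi=0$ — and stability of the pair yields $\mu_g(K)<\mu_g(E)$. Since $\mu_g(E)$ is the rank-weighted average of $\mu_g(K)$ and $\mu_g(I)$ coming from the short exact sequence, this forces $\mu_g(I)>\mu_g(E)$. At the same time, $\widetilde I$ has rank $r-\rank K\in\{1,\dots,r-1\}$, so it too is a proper nonzero $\phi$-invariant subsheaf, and stability yields $\mu_g(\widetilde I)=\mu_g(I)-\deg_g V<\mu_g(E)$. Combining the two inequalities gives $\deg_g V > \mu_g(I)-\mu_g(E) > 0$, which is even stronger than what is needed.

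The only slightly delicate point is the bookkeeping that shows $\widetilde I$ is a $\phi$-invariant subsheaf of $E$ through the tensor twists; once that is in hand, the proof is a two-step slope comparison applied to $K$ and $\widetilde I$, with the determinant argument needed only to cover the case where $\phi$ is injective and stability produces no proper subsheaf to work with.
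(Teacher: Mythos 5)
Your argument is correct, and its core coincides with the paper's: both proofs extract the two invariant subsheaves $\ker\phi$ and $\operatorname{im}\phi$ and play the resulting slope inequalities against each other. The differences are worth noting. First, you keep everything inside $E$ by twisting the image back, $\widetilde{I}=\operatorname{im}\phi\otimes V^{-1}\subset E$, whereas the paper works with $\operatorname{im}\phi$ as a subsheaf of $E\otimes V$ invariant under $\phi'=\phi\otimes\operatorname{id}_V$ and first checks that $(E,\phi)$ is stable iff $(E\otimes V,\phi')$ is; since $V$ is a line bundle these are the same manoeuvre, and your verification that $\phi(\widetilde{I})\subseteq\phi(E)=\widetilde{I}\otimes V$ is exactly the needed bookkeeping. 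Second, and more substantively, you handle the injective case ($\ker\phi=0$, where neither invariant subsheaf has rank $<\operatorname{rank}E$ and stability gives nothing) via $\det\phi\in H^0(X,V^{r})\setminus\{0\}$ and the fact that a line bundle with a nonzero section has nonnegative Gauduchon degree; the paper instead splits on whether $\mu(\operatorname{im}\phi)=\mu(E\otimes V)$ and, in the full-rank case, invokes the inequality $\mu(\mathcal F)\le\mu(\mathcal G)$ for a full-rank subsheaf. Both devices ultimately rest on the nonnegativity of $\int_D\omega^{n-1}$ for effective divisors $D$, but your determinant argument is more self-contained and yields the slightly finer conclusion that $\deg_g V=0$ can only occur when $\phi$ is injective with $\det\phi$ nowhere vanishing. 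One small point to tighten: in the non-injective case you should note explicitly that $\operatorname{rank}(\ker\phi)<\operatorname{rank}E$ (not merely $\ker\phi\ne E$) because otherwise $\operatorname{im}\phi$ would be a nonzero torsion subsheaf of the torsion-free sheaf $E\otimes V$; the paper spells this out, and your later claim that $\operatorname{rank}\widetilde{I}\in\{1,\dots,r-1\}$ silently relies on it.
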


\begin{proof}
We first note that the Higgs field $\phi: E \rightarrow E \otimes V$ on $E$ induces the Higgs field $\phi' := \phi \otimes \id_V: E \otimes V \rightarrow (E \otimes 
V) \otimes V$ on $E \otimes V$. Moreover, since $V$ is a line bundle, $P$ is a $\phi$-invariant subsheaf of $E$ if and only if $P \otimes V$ is a $\phi'$-invariant subsheaf of $E \otimes V$. Therefore, $(E,\phi)$ is stable if and only if $(E \otimes V,\phi')$ is stable. Also note that $\ker \phi$ is a $\phi$-invariant subsheaf of $E$ and $\Image \phi$ is a $\phi'$-invariant subsheaf of $E \otimes V$. In addition, $\rank(E) = \rank(E \otimes V)$ so that 
\[ \mu(E \otimes V) = \mu(E) + \deg(V).\]
Suppose that $\mu(\Image \phi) = \mu(E \otimes V)$. By stability of $(E \otimes V,\phi')$, we must then have 
\[\rank(\Image \phi) = \rank(E \otimes V) = \rank(E).\]
However, $\Image \phi \simeq E / \ker \phi$ and $\ker \phi$ is a torsion-free subsheaf of $E$ if $\phi$ is not injective, in which case $\rank(\Image \phi) < \rank(E)$. Hence, $\ker \phi = 0$, implying that $\Image \phi \simeq E$. Thus, $\mu(E) = \mu(\Image \phi)$ and $\deg V = 0$ in this case. 

Let us now assume that $\mu(\Image \phi) \neq \mu(E \otimes V)$. Then, $\Image \phi$ is a nonzero torsion-free subsheaf of $E \otimes V$ since $\Image \phi \neq 0$ by assumption. If $\rank(\Image \phi) = \rank(E \otimes V)$, then $\mu(\Image \phi) \leq \mu(E \otimes V)$ so that $\mu(\Image \phi) < \mu(E \otimes V)$. Otherwise, $\Image \phi$ is a $\phi'$-invariant proper subsheaf of $E \otimes V$, which means that $\mu(\Image \phi) < \mu(E \otimes V)$ by stability of $(E \otimes V,\phi')$. Thus, $\mu(\Image \phi) < \mu(E \otimes V)$ in both cases. Consider the exact sequence
\[ 0 \rightarrow \ker \phi \rightarrow E \rightarrow E/\ker \phi \simeq \Image \phi \rightarrow 0.\]
If $\ker \phi = 0$, then $E \simeq \Image \phi$ and $\mu(E) = \mu(\Image \phi)$. Hence,
\[ \mu(E) = \mu(\Image \phi) < \mu(E \otimes V) = \mu(E) + \deg V, \] 
implying that $\deg V > 0$. If instead $\ker \phi \neq 0$, then it is a proper $\phi$-invariant subsheaf of $E$ with $\rank(\ker \phi) < \rank(E)$. Indeed, $\ker \phi \neq E$ and $\Image \phi \neq 0$ since $\phi \neq 0$. And if $\rank(\ker \phi) = \rank E$, then $\Image \phi \simeq E/\ker \phi$ is a non-zero torsion subsheaf of $E$, which is impossible. Thus, $\rank(\ker \phi) < \rank(E)$ and $\mu(\ker \phi) < \mu(E)$ by stability of $(E,\phi)$. Moreover, given the above exact sequence, we obtain $\mu(E)<\mu(\Image \phi)$, implying again that $\mu(E) < \mu(E \otimes V)$ and $\deg V > 0$. Putting all cases together gives $\deg V \geq 0$.
\end{proof}

\begin{remark}
This is a result similar to what is known about (co)-Higgs bundles on curves: if $X$ is a curve, then stable (co)-Higgs bundles with non-zero Higgs field exist on $X$ if and only if $\mc{T}_X^*$ (resp. $\mc{T}_X$) has non-negative degree \cite{Hit-Higgs, Rayan}. Furthermore, the case where $V = K_X$ and $X$ is any compact complex manifold was proven in \cite{MarchesanoMoraruSavelli}.
\end{remark}

In the case where $V$ is the trivial line bundle on $X$, we can say more about $g$-stable pairs:

\begin{proposition}\label{endo}
Let $(X,g)$ be a compact Gauduchon manifold and $(E,\phi)$ be a $g$-stable pair with $\phi \in H^0(X,\End E)$. Then, $\phi = \lambda \id_E$ for some $\lambda \in \mathbb{C}$.
\end{proposition}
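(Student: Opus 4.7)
The plan is to use the triviality of $V = \mc{O}_X$ to control the eigenvalues of $\phi$ globally, and then derive a contradiction from $g$-stability. First, I would observe that since $\phi \in H^0(X, \End E)$, the coefficients of its characteristic polynomial are global holomorphic functions on $X$, i.e.\ elements of $H^0(X, \mc{O}_X)$. Because $X$ is a compact (connected) complex manifold, the maximum principle forces these coefficients to be constant, so the eigenvalues of $\phi$ are constant complex numbers.

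Next, I would fix any eigenvalue $\lambda$ of $\phi$ and study the commuting endomorphism $\psi := \phi - \lambda\,\id_E$. Since $\det \psi \equiv 0$ on $X$, the pointwise kernel is non-zero everywhere, so $\mc{K} := \ker \psi$ is a non-zero coherent subsheaf of $E$. Because $\phi$ commutes with $\psi$, standard bookkeeping ($\phi \psi = \psi \phi$ preserves kernel and image) shows that both $\mc{K}$ and $\Image\,\psi$ are $\phi$-invariant subsheaves of $E$.

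Now I would split into cases. If $\mc{K} = E$, then $\psi \equiv 0$ and $\phi = \lambda\,\id_E$, as desired. Otherwise $0 < \rank \mc{K} < \rank E$, and hence $0 < \rank(\Image\,\psi) = \rank E - \rank \mc{K} < \rank E$ as well, so both $\mc{K}$ and $\Image\,\psi$ are proper $\phi$-invariant subsheaves of positive rank. Applying $g$-stability of $(E, \phi)$ to each gives $\mu_g(\mc{K}) < \mu_g(E)$ and $\mu_g(\Image\,\psi) < \mu_g(E)$. Combining these strict inequalities with the additivity $\deg_g(E) = \deg_g(\mc{K}) + \deg_g(\Image\,\psi)$ coming from the short exact sequence
\[0 \to \mc{K} \to E \to \Image\,\psi \to 0\]
(via $\det E \cong \det \mc{K} \otimes \det(\Image\,\psi)$) and the corresponding rank relation produces the contradiction $\deg_g(E) < \mu_g(E)\,\rank E = \deg_g(E)$. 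I do not expect any genuine obstacle here: the argument is essentially a Schur-type fact adapted to the stability framework, relying only on the maximum principle, additivity of $\deg_g$ on short exact sequences of torsion-free sheaves, and the invariance of $\ker \psi$ and $\Image\,\psi$ under $\phi$.
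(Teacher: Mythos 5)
Your proof is correct, but it takes a genuinely different route from the paper's. The paper does not globalize the eigenvalue through the characteristic polynomial. Instead, it first proves, by essentially the same kernel/image slope argument you use at the end, that any non-zero stable $\phi \in H^0(X,\End E)$ must have $\ker \phi = 0$ and hence be a bundle automorphism. It then picks an eigenvalue $\lambda$ of $\phi$ at a \emph{single point} $p$, so that $\coker(\phi - \lambda\id_E)$ has non-empty support, and invokes the Chern character identity $\chern(\ker(\phi-\lambda\id_E)) = \chern(\coker(\phi-\lambda\id_E)) \neq 0$ to conclude that $\ker(\phi - \lambda\id_E) \neq 0$, which forces $\phi - \lambda\id_E = 0$ by the first step. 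Your replacement of this step --- observing that the coefficients of the characteristic polynomial are global holomorphic functions on a compact connected manifold, hence constant, so that $\det(\phi - \lambda\id_E) \equiv 0$ for a fixed root $\lambda$ and the kernel sheaf of $\phi-\lambda\id_E$ is non-zero --- is the classical ``stable implies simple'' argument and is arguably cleaner: it avoids the claim that a non-zero coherent sheaf with non-empty support has non-vanishing Chern character, which requires a little care, and it bypasses the paper's intermediate assertion that an injective sheaf endomorphism of $E$ is automatically an automorphism. The core stability contradiction is the same in both proofs; you phrase it via additivity of $\deg_g$ on the short exact sequence $0 \to \ker\psi \to E \to \Image\psi \to 0$, while the paper uses the equivalent seesaw form ($\mu(\ker) < \mu(E)$ forces $\mu(E) < \mu(\Image)$, contradicting $\mu(\Image) \leq \mu(E)$).
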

\begin{remark}
This proposition gives a slight generalisation of \cite[Theorem 2.1]{Biswas} and clarifies the proof of \cite[Lemma 3.2]{MarchesanoMoraruSavelli}.
\end{remark}
\begin{proof}
Suppose that $\phi \neq 0$ so that $\Image \phi \neq 0$. If $\ker \phi = 0$, then $E \simeq \Image \phi$ and $\Image \phi$ is a subbundle of $E$ with $\rank(\Image \phi) = \rank(E)$, implying that $\phi$ is an automorphism. Let us assume instead that $\ker \phi \neq 0$ so that $\ker \phi$ is a torsion-free subsheaf of $E$ and $\rank(\Image \phi) = \rank(E/\ker \phi) < \rank(E)$. Moreover, since $\Image \phi \neq 0$, it is a torsion-free subsheaf of $E$, implying that $\rank(\ker \phi) < \rank(E)$. Hence, $\ker \phi$ is a $\phi$-invariant proper subsheaf of $E$ that fits into the exact sequence  
\[ 0 \rightarrow \ker \phi \rightarrow E \rightarrow E/\ker \phi \simeq \Image \phi \rightarrow 0.\]
By stability of $(E,\phi)$, we have $\mu(\ker \phi) < \mu(E)$, and so $\mu(E)<\mu(\Image \phi)$. On the other hand, $\Image \phi$ is a non-zero $\phi$-invariant subsheaf of $E$. Moreover, if $\rank(\Image \phi) = \rank(E)$, then $\mu(\Image \phi) \leq \mu(E)$; and if $\rank(\Image \phi) < \rank(E)$, then $\Image \phi$ is a proper subsheaf of $E$, implying that $\mu(\Image \phi) < \mu(E)$ by stability of $(E,\phi)$. Hence, $\mu(E) < \mu(\Image \phi) \leq \mu(E)$, leading to a contradiction. Thus, $\ker \phi = 0$ and $\phi$ is a vector bundle automorphism of $E$.

Let $p\in X$ be any point. Then $\phi\vert_{E_p}$ is a finite-dimensional operator over $\mathbb{C}$, so it has an eigenvalue $\lambda \in \mathbb{C}^*$. Note that $(E,\phi)$ is stable if and only if $(E, \phi'=\phi-\lambda\id_E)$ is (because any subsheaf $P$ of $E$ is $\phi$-invariant if and only if it is $\phi'$-invariant), and $p$ is contained in the support of $\mathrm{coker}(\phi')$. Since $\mathrm{coker}(\phi')$ has non-empty support and $\phi'$ is a map from $E$ to itself, $\chern(\ker(\phi'))=\chern(\mathrm{coker}(\phi'))\neq 0$, meaning that $\ker(\phi')\neq 0$. As we have already eliminated the case where $\ker(\phi')\neq 0$ and $\phi'\neq 0$, we must have $\phi=\lambda\id_E$.
\end{proof}

Finally, if $V$ is an extension of vector bundles, we have the following:
\begin{proposition}\label{extension}
Let $(X,g)$ be a compact Gauduchon manifold. Suppose that $V$ is an extension of holomorphic vector bundles $$0 \rightarrow V_1 \stackrel{i}{\rightarrow} V \stackrel{p}{\rightarrow} V_2 \rightarrow 0$$
on $X$. If $X$ has no non-trivial $g$-stable pairs of the form $(E, \vp_1:E \to E\otimes V_1)$ or $(E, \vp_2:E \to E\otimes V_2)$, then it has no non-trivial $g$-stable pairs of the form $(E,\phi: E \to E\otimes V)$.

\end{proposition}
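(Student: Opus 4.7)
The plan is to argue by contrapositive: assuming a non-trivial $g$-stable $V$-pair $(E,\phi)$ exists on $X$, I would exhibit a non-trivial $g$-stable $V_1$- or $V_2$-pair, contradicting the hypothesis. The natural object to consider is the composition $\phi_2 := (\id_E \otimes p) \circ \phi : E \to E \otimes V_2$, and I would split into two cases according to whether $\phi_2$ vanishes.

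In the first case, $\phi_2 = 0$, so $\phi$ factors uniquely through the inclusion $\id_E \otimes i : E \otimes V_1 \hookrightarrow E \otimes V$, yielding a map $\phi_1 : E \to E \otimes V_1$ with $(\id_E \otimes i) \circ \phi_1 = \phi$. I would then check that $(E,\phi_1)$ is a non-trivial $g$-stable $V_1$-pair. Non-vanishing is immediate since $i$ is an inclusion of bundles; trace-freeness and the integrability $\phi_1 \wedge \phi_1 = 0$ descend from the corresponding properties of $\phi$ using the fact that $i$ and $\wedge^2 i : \wedge^2 V_1 \hookrightarrow \wedge^2 V$ are injective. For $g$-stability, any $\phi_1$-invariant subsheaf $\mc{F} \subseteq E$ satisfies $\phi(\mc{F}) = (\id_E \otimes i)(\phi_1(\mc{F})) \subseteq \mc{F} \otimes V$, so is automatically $\phi$-invariant, and $\mu_g(\mc{F}) < \mu_g(E)$ follows from $g$-stability of $(E,\phi)$.

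In the second case, $\phi_2 \neq 0$, and I would claim that $(E,\phi_2)$ is itself a non-trivial $g$-stable $V_2$-pair. The easy checks are all immediate: non-vanishing is assumed, $\tr(\phi_2) = (\id_E \otimes p)(\tr \phi) = 0$, and $\phi_2 \wedge \phi_2 = (p \wedge p)(\phi \wedge \phi) = 0$. The substantive content is $g$-stability, and this is where the main obstacle lies. If $\mc{F} \subsetneq E$ is a $\phi_2$-invariant subsheaf, the condition $(\id_E \otimes p)(\phi(\mc{F})) \subseteq \mc{F} \otimes V_2$ forces only $\phi(\mc{F}) \subseteq \mc{F} \otimes V + E \otimes V_1$ rather than the stronger $\phi(\mc{F}) \subseteq \mc{F} \otimes V$, so $\phi_2$-invariance does not automatically upgrade to $\phi$-invariance. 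The obstruction is captured by the induced morphism $\psi_{\mc{F}} : \mc{F} \to (E/\mc{F}) \otimes V_1$ obtained from $\phi|_{\mc{F}}$ modulo $\mc{F} \otimes V$: when $\psi_{\mc{F}} = 0$, $\mc{F}$ is genuinely $\phi$-invariant and the slope inequality is immediate from $g$-stability of $(E,\phi)$. When $\psi_{\mc{F}} \neq 0$, my approach would be to iteratively enlarge $\mc{F}$ by adjoining preimages in $E$ of the subsheaves of $E/\mc{F}$ cut out by $\mathrm{Image}(\psi_{\mc{F}})$, producing an ascending chain of $\phi_2$-invariant subsheaves whose stabilization is $\phi$-invariant, and then deduce the slope bound on $\mc{F}$ from the bound on its $\phi$-invariant enlargement by controlling the degree and rank contributions at each step.

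The main obstacle is the stability verification in Case B when $\psi_{\mc{F}} \neq 0$: bridging the gap between $\phi_2$-invariance and $\phi$-invariance requires exploiting the extension structure $0 \to V_1 \to V \to V_2 \to 0$ in a quantitative way. Case A, by contrast, is straightforward because the inclusion $i$ makes $\phi_1$-invariance automatically transfer to $\phi$-invariance, whereas the quotient map $p$ creates no such direct transfer from $\phi_2$ to $\phi$.
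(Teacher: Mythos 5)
Your overall strategy---passing to $\phi_2:=(\id_E\otimes p)\circ\phi$ and splitting on whether it vanishes---is exactly the paper's, and your Case A is complete and agrees with the second half of the paper's argument: when $\phi_2=0$ the field factors as $\phi=(\id_E\otimes i)\circ\phi_1$, and since $i$ is an inclusion, every $\phi_1$-invariant subsheaf is automatically $\phi$-invariant, so stability transfers. The problem is Case B. You correctly isolate the crux: $\phi_2$-invariance of $\mc{F}$ only yields $\phi(\mc{F})\subseteq \mc{F}\otimes V+E\otimes V_1$, so it does not upgrade to $\phi$-invariance, and stability of $(E,\phi_2)$ therefore does not follow formally from stability of $(E,\phi)$. (For what it is worth, the paper's own proof asserts precisely this implication---``$\vp_2(P_2)\subset P_2\otimes V_2$, so $\phi(P_2)\subset P_2\otimes V$''---without further justification, so you have put your finger on the one step of the published argument that is not self-evident.) However, your proposed repair does not close the gap. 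The iterative enlargement produces an ascending chain $\mc{F}=\mc{F}_0\subseteq\mc{F}_1\subseteq\cdots$ stabilising at a $\phi$-invariant subsheaf $\mc{F}_\infty$, and stability of $(E,\phi)$ then bounds $\mu_g(\mc{F}_\infty)$ (when $\mc{F}_\infty\neq E$); but an upper bound on the slope of a sheaf \emph{containing} $\mc{F}$ gives no upper bound on $\mu_g(\mc{F})$ itself (already $\mc{F}=\mc{O}(1)\subset\mc{O}(1)\oplus\mc{O}(-1)$ shows a subsheaf can have strictly larger slope than its enlargement), and nothing in the construction controls the degrees or ranks of the intermediate subsheaves you adjoin. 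So the slope inequality for $\mc{F}$ is not established, and Case B remains unproved.

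The missing input, which neither your sketch nor the paper invokes at this point, is most plausibly the integrability condition $\phi\wedge\phi=0$: its component in the graded piece $\End E\otimes(V_1\otimes V_2)$ of $\End E\otimes\wedge^2V$ forces a commutation relation between the $V_1$- and $V_2$-components of $\phi$ in any local splitting, and commuting endomorphisms preserve one another's invariant subsheaves in the situations the paper actually uses (rank-$2$ bundles $E$ with $V_1,V_2$ line bundles, where a $\phi_2$-invariant rank-one subsheaf is generically an eigensheaf of $\phi_2$ wherever $\phi_2\neq 0$). Without some such argument, both your Case B and the corresponding paragraph of the paper are incomplete at the same spot; your write-up has the merit of naming the obstruction, but the chain-of-enlargements strategy as described does not resolve it.
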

\begin{proof}
Suppose that every trace-free $g$-stable pair $(E, \vp_i:E \to E\otimes V_i)$ has $\vp_i=0$, where $i=1,2$. Let $\phi:E \to E\otimes V$ be a trace-free stable pair, and set $\vp_2:=(\id_E\otimes p)\circ \phi$. Suppose $P_2$ is a $\vp_2$-invariant proper subsheaf of $E$. Then $\vp_2(P_2)\subset P_2\otimes V_2$, so $\phi(P_2)\subset P_2\otimes V$, meaning that $P_2$ is also $\phi$-invariant. By stability of $(E,\phi)$, we have $\mu(P_2)<\mu(E)$ so that $(E,\vp_2)$ is also stable. Since $\vp_2$ is clearly trace-free, $\vp_2=0$, implying that $\phi=(\id_E\otimes i)\circ\vp_1$ for some trace-free $\vp_1:E \to E\otimes V_1$. Suppose $P_1$ is a $\vp_1$-invariant proper subsheaf of $E$. Then $\vp_1(P_1)\subset P_1\otimes V_1\subset P_1\otimes V$, so $\mu(P_1)<\mu(E)$ by stability of $(E,\phi)$. This implies that $(E,\vp_1)$ is stable, so $\phi=(\id_E\otimes i)\circ\vp_1=0$.
\end{proof}
%%%%%%%%%%%%%%%%%%%%%%%%%%%
%%%%%%%%%%%%%%%%%%%%%%%%%%%

\section{Bundles on principal elliptic surfaces}\label{elliptic}
In this section, we present for the convenience of the reader some facts about holomorphic vector bundles on principal elliptic fibrations that can be found in \cite{Teleman,MoraruThesis,BrMor,BrMor2,BrMoFM,Boulter}. We also prove, in section \ref{Vpairs}, some general results for $V$-pairs on non-K\"ahler principal elliptic surfaces.

Recall that every principal elliptic surface can be described in the following way \cite{Teleman}:
Let $B$ be a compact Riemann surface and $\Theta\in \Pic^d(B)$. We denote the total space of the holomorphic frame bundle corresponding to $\Theta$ by $\Theta^*$. (As a complex manifold, $\Theta^*$ is the complement of the zero section in the total space of $\Theta$.) If $\tau$ is a complex number with $\lvert\tau\rvert >1$, then $X:=\Theta^*/(\tau)$ is a principal elliptic surface with projection onto $B$ induced from the projection of $\Theta$, where $(\tau)$ acts as a subgroup of the structure group $\mathbb{C}^*$ of $\Theta^*$. We denote by $T$ the structure group $\mathbb{C}^*/(\tau)$ of the elliptic surface.

The above description uniquely determines the biholomorphism class of the resulting elliptic surface up to swapping $\Theta$ and $\Theta^\vee$ and pulling back by an isomorphism of the base. Furthermore, $X$ is K\"ahler if and only if $d:=c_1(\Theta)=0$. (Up to homeomorphism, the elliptic surface is uniquely determined by $\lvert d\rvert$ and the genus of $B$.)

\subsection{Line bundles}\label{Line bundles}
Let $\pi:X \to B$ be a principal elliptic surface. 
In order to understand the Picard group of $X$, we break up the problem into studying $\Pic^0(X)$ and the Néron--Severi group $NS(X):=\Pic(X)/\Pic^0(X)$. 

Any line bundle in $\Pic^0(X)$ decomposes uniquely as the product of the pullback of a degree zero line bundle on the base and a bundle with \emph{constant factor of automorphy} \cite[Proposition 1.6]{Teleman}. 
The line bundle $L_a$ with constant factor of automorphy $a$ is the quotient of the trivial line bundle $\Theta^*\times \mathbb{C}$ on $\Theta^*$ by the $\mathbb{Z}$-action \begin{align*}\Theta^*\times \mathbb{C}\times \mathbb{Z}&\to \Theta^*\times \mathbb{C},\\
(z,t,n)&\mapsto(\tau^nz,a^nt).\end{align*}
Using these facts, one can also show that $\pi^*\Theta\simeq L_{\tau^{-1}}$ \cite[Proposition 1.7]{Teleman}, demonstrating that the first Chern class of a pullback bundle is torsion. We can see from the above computations that $\Pic^\mathrm{Tors}(X)\simeq \Pic(B)\times \mathbb{C}^*/(\pi^*\Theta, L_\tau)$, where $\Pic^\mathrm{Tors}(X)$ is the subgroup of $\Pic(X)$ consisting of line bundles with torsion first Chern class.

Given that $\pi:X \to B$ is an elliptic fibration, we can understand the quotient $\Pic(X)/\pi^*\Pic(B)$ as a group corresponding to families of line bundles on the elliptic fibre $T$ parameterised by $B$. 
Since the line bundles with constant factor of automorphy contain a cyclic subgroup of $\pi^*\Pic(B)$, such families can be described by maps from $B$ to $T^*:=\Pic^0(T)$. 
Up to fixing base points, we obtain the group $\Pic(X)/\pi^*\Pic(B)\simeq T^*\times \Hom(J(B), T^*)$, where $J(B)$ is the Jacobian variety of $B$. 
(For a more detailed proof of this result, see \cite[Section 3.2]{Brin}.)

There is a one-to-one correspondence between $T^*\times \Hom(J(B), T^*)$ and maps from $B$ to $T^*$, as the Albanese map of $B$ is $b\mapsto \mc{O}_B(b-b_0)$ for a choice of base point $b_0 \in B$. 
This fact allows us to parameterise $\Pic(X)/\pi^*\Pic(B)$ by the set of sections of the \emph{relative Jacobian} $\pi_J:J(X)\to B$ when $\pi:X\to B$ is principal. (Note that the relative Jacobian of a non-Kähler elliptic surface $\pi:X\to B$ with general fibre $T$ is $J(X)=B\times T^*$, with projection onto the first factor being the associated map to $B$.) 
To any line bundle $\delta \in \Pic(X)$, we associate the section \[S_\delta=\{(b,\lambda)\in J(X): \delta\vert_{\pi^{-1}(b)}\simeq \lambda\},\]
called the \emph{spectral curve} of $\delta$.
%%%%%%%%%%%%%%%%%%%%%%%%%%%%%%%%
%%%%%%%%%%%%
Let $\pi: X \rightarrow B$ be a non-K\"ahler principal bundle over a curve $B$ of genus $g$. The degree of torsion line bundles can be computed explicitly in this case (for details, see \cite{Teleman}). Indeed, every torsion line bundle $L \in \Pic^\tau(X)$ decomposes uniquely as $L = H \otimes L_a$ for $H \in \cup_{i=0}^{d-1}\Pic(B)$ and $a \in \mathbb{C}^*$, where $d$ is the degree of the positive line bundle on $B$ used to construct $X$. The degree of $L$ is given, in terms of this decomposition, by
\[ \deg_g L = c_1(H) - \frac{d}{\ln \vert \tau \vert} \ln \vert a \vert.\]
In particular, $\deg(\pi^* H) = \deg H$ for all $H \in \Pic(B)$. Moreover, note that $\deg_g: \Pic^\tau(X) \rightarrow \mathbb{R}$ is surjective since, for any $c \in \mathbb{R}$, we have $c = \deg_g L_a$ with $a = \tau e^{-c/d}$.

Associated to any line bundle $\delta \in \Pic(X)$, there is also a ruled surface $\mathbb{F}_\delta$ given by the quotient of $J(X)$ by the $\mathbb{Z}/2\mathbb{Z}$-action sending $(b,\lambda)$ to $(b,\delta\vert_{\pi^{-1}(b)}\otimes \lambda^{-1})$.
The $e$-invariant 
\begin{equation}\label{ruled}e_\delta:=\max\{-\sigma^2:\sigma \text{ is a section of }\mb{F}_\delta\}\end{equation}
of $\mb{F}_\delta,$ where $\sigma^2$ is the self-intersection of the curve $\sigma$, appears in the existence criteria for rank-$2$ vector bundles on $X$ with determinant $\delta$, as shown in \cite[Theorem 4.5]{BrMor}. In particular, there is a rank-2 bundle $E$ on $X$ with determinant $\delta$ and second Chern class $c_2$ if and only if $\Delta(2,\delta,c_2)\geq -e_\delta/4$. Note that the e-invariant lies in the range $-g\leq e_\delta \leq 0$. 
%%%%%%%%%%%%

\subsection{The holomorphic (co)tangent bundle}

In this section, we describe the holomorphic tangent and cotangent bundles of principal elliptic surfaces.
\begin{proposition}\label{cotangent}
Let $\pi:X\to B$ be a principal elliptic surface. The holomorphic cotangent bundle $\mathcal{T}_X^*$ is then given by $\pi^*(\mathcal{A}_B)$, where $\mathcal{A}_B$ is an extension $$0 \rightarrow K_B \rightarrow \mc{A}_B \rightarrow \mc{O}_B \rightarrow 0.$$ If $X$ is K\"ahler, the extension splits; otherwise, $\mathcal{A}_B$ is the unique non-split extension of $\mathcal{O}_B$ by $K_B$.
\end{proposition}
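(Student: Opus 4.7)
My plan is to derive the description of $\mathcal{T}_X^*$ from the relative cotangent sequence of the fibration $\pi:X\to B$ and then show that the resulting extension descends to $B$.

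First, I would use the principal $T$-bundle structure. The infinitesimal action of the structure group $T=\mathbb{C}^*/(\tau)$, which has one-dimensional Lie algebra, furnishes a global nowhere-vanishing vertical vector field on $X$, so that $\mathcal{T}_{X/B}\simeq \mathcal{O}_X$. Dualising the relative tangent sequence then yields
\[
0\to \pi^*K_B\to \mathcal{T}_X^*\to \mathcal{O}_X\to 0,
\]
whose extension class lies in $H^1(X,\pi^*K_B)$.

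Next, the Leray spectral sequence combined with $\pi_*\mathcal{O}_X=\mathcal{O}_B$ and the projection formula produces a short exact sequence
\[
0\to H^1(B,K_B)\to H^1(X,\pi^*K_B)\to H^0(B,K_B\otimes R^1\pi_*\mathcal{O}_X)\to 0,
\]
where the potential obstruction $E_2^{2,0}=H^2(B,K_B)$ vanishes since $B$ is a curve. The crux of the argument is to show that the extension class of $\mathcal{T}_X^*$ lies in the subspace $H^1(B,K_B)$, which is equivalent to the vanishing of the associated Kodaira--Spencer class. This is morally clear because every fibre of $\pi$ is biholomorphic to the fixed elliptic curve $T$, so there is no variation of complex structure. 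To make it rigorous I would compute a \v{C}ech cocycle directly: on an open cover $\{U_\alpha\}$ of $B$ over which $\Theta$ trivialises with transition functions $g_{\alpha\beta}$, the surface $X$ is locally $U_\alpha\times T$ with fibre coordinates satisfying $t_\beta=g_{\alpha\beta}\,t_\alpha$, and the local $1$-forms $dt_\alpha/t_\alpha$ give splittings of the sequence over each $U_\alpha$. On overlaps,
\[
\frac{dt_\beta}{t_\beta}-\frac{dt_\alpha}{t_\alpha}=d\log g_{\alpha\beta}
\]
is visibly pulled back from $B$, and under the standard identification of the Atiyah class with the first Chern class it represents $c_1(\Theta)\in H^1(B,K_B)$.

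Defining $\mathcal{A}_B$ to be the extension of $\mathcal{O}_B$ by $K_B$ determined by this class then gives $\mathcal{T}_X^*\simeq \pi^*\mathcal{A}_B$. The class vanishes precisely when $c_1(\Theta)=0$, i.e.\ when $X$ is K\"ahler, so $\mathcal{A}_B$ splits exactly in that case. Since $H^1(B,K_B)\cong \mathbb{C}$ is one-dimensional, any two non-zero classes differ by a non-zero scalar and determine isomorphic extensions, which gives uniqueness in the non-K\"ahler case. The step I expect to require the most care is the identification of the \v{C}ech cocycle $\{d\log g_{\alpha\beta}\}$ with $c_1(\Theta)$, which is what ties the vanishing of the extension class to the K\"ahler condition.
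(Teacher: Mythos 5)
Your proof is correct and follows essentially the same route as the paper's: both arguments reduce to computing the \v{C}ech cocycle $\{d\log f_{\alpha\beta}\}$ built from the transition functions of $\Theta$ (the paper reads it off as the off-diagonal entry of the transition matrices of $\mathcal{T}_X^*$, you as the difference of local splittings of the dualised relative tangent sequence), and both conclude uniqueness from $\dim\Ext^1(\mc{O}_B,K_B)=1$. Your explicit identification of this cocycle with $c_1(\Theta)$ via the Atiyah class is a nice refinement, as it cleanly justifies non-splitness when $c_1(\Theta)\neq 0$, a point the paper states rather tersely.
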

\begin{proof}
Let $\{U_\alpha, \varphi_\alpha\}$ be an atlas of $B$ such that $\{U_\alpha, \psi_\alpha\}$ is a local trivialization for $\Theta^*$ and set $g_{\alpha\beta}:\varphi_\beta(U_\alpha\cap U_\beta)\to \varphi_\alpha(U_\alpha\cap U_\beta)$ to be the corresponding transition functions. We also set $V_0=\{[w]\in \mathbb{C}^*/(\tau):1<\lvert w \rvert <\lvert\tau\rvert\}$ and $V_1=\{[w]\in \mathbb{C}^*/(\tau):\lvert\tau\rvert^{-1/2}<\lvert w\rvert<\lvert\tau\rvert^{1/2}\}$. Let $f_{\alpha\beta}$ be transition maps for $\Theta$. Then $\{\psi_\alpha^{-1}(\vp_\alpha(U_\alpha)\times V_i)\}$ gives an atlas for $X$ with transition functions $\eta_{ij,\alpha\beta}(z,w)=(g_{\alpha\beta}(z), f_{\alpha\beta}(z)\tau^k w)$, where $z$ is a local coordinate in $B$, $w$ is a coordinate of $\mathbb{C}^*$, and $$k=\begin{cases}
1 & i=0,j=1,\lvert w\rvert<1\\
-1 & i=1,j=0, \lvert w\rvert>\lvert \tau\rvert^{1/2}\\
0 & \text{otherwise}
\end{cases}.$$ 
Note that $\diff z$ and $\frac{1}{w}\diff w$ give a local frame for the open set $\psi_\alpha^{-1}(\varphi_\alpha(U_\alpha)\times V_i).$ If we now compute the pullbacks $\eta_{ij,\alpha\beta}^*(\diff \tilde{z})$ and $\eta_{ij,\alpha\beta}^*(\frac{1}{\tilde{w}}\diff \tilde{w})$, we get
\begin{align*}
    \eta_{ij,\alpha\beta}^*(\diff \tilde{z})&=\diff(g_{\alpha\beta}(z))\\
    &=g_{\alpha\beta}' \diff z,\\
    \eta_{ij,\alpha\beta}^*(\frac{1}{\tilde{w}}\diff \tilde{w})&=\frac{1}{f_{\alpha\beta}\tau^k w}\diff(f_{\alpha\beta}\tau^k w)\\
    &=\frac{1}{w}\diff w+ \frac{f_{\alpha\beta}'}{f_{\alpha\beta}}\diff z,
\end{align*}
so $T^*_X$ has transition maps $\begin{bmatrix}g_{\alpha\beta}' & \frac{f_{\alpha\beta}'}{f_{\alpha\beta}}\\ 0 & 1\end{bmatrix}.$ Since the transition maps of $\mathcal{T}^*_X$ depend only on $z$ and since $g_{\alpha\beta}'$ are the transition maps of $\pi^*(\mathcal{T}^*_B)=K_X$, we find that $\mathcal{T}^*_X$ is the pullback of an extension in $\Ext^1(\mc{O}_B, K_B)$. If $c_1(\Theta)=0$, then without loss of generality $f_{\alpha\beta}$ may be chosen to be locally constant, so the extension splits. If instead $c_1(\Theta)\neq 0$, it is a non-split extension. By Serre duality, $$\Ext^1(\mc{O}_B, K_B)=\Hom(K_B, \mc{O}_B\otimes K_B)^*=\Hom(K_B,K_B)^*\cong \mathbb{C},$$
so this extension is unique.
\end{proof}
\begin{remark}
In the case where $\pi:X\to \mb{P}^1$ is a Hopf surface, $\mc{A}_B\cong \mc{O}_{\mb{P}^1}(-1)\oplus \mc{O}_{\mb{P}^1}(-1)$. If $\pi:X\to B$ is a Kodaira surface, then $\mc{A}_B$ is the Atiyah bundle on $B$.
\end{remark}

The above proposition also determines the holomorphic tangent bundle $\mathcal{T}_X$, as $\mathcal{T}_X\cong \mathcal{T}_X^*\otimes K_X^\vee=\mathcal{T}_X^*\otimes\pi^*(K_B^\vee)$.

If we use Propositions \ref{tracefree}, \ref{endo}, and \ref{extension} with this description of the (co)-tangent bundle, we get the following:

%%%%%%%%%%%%%%%%%%%%%%%%%%%%%%%%%%%%%
%%%%%%%%%%%%%%%%%%%%%%%%%%%%%%%%%%%%%

\begin{proposition}
Let $\pi:X\to B$ be a principal elliptic surface. If $X$ admits a non-zero stable trace-free (co)-Higgs bundle, then so does $B$. In other words, if $X$ has a non-zero stable trace-free Higgs bundle, then $B$ has genus at least 2, and if $X$ has a non-zero stable trace-free co-Higgs bundle, then $B$ has genus 0. 
\end{proposition}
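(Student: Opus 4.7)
The plan is to use Proposition \ref{cotangent} to decompose $\mc{T}_X^*$ (respectively $\mc{T}_X$) into line-bundle pieces pulled back from $B$, then apply Propositions \ref{extension}, \ref{tracefree}, and \ref{endo} to bound the genus of $B$, and finally invoke the classical curve results of Hitchin and Rayan to produce the desired (co)-Higgs bundle on $B$ itself.

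Concretely, Proposition \ref{cotangent} realizes $\mc{T}_X^* \cong \pi^*\mc{A}_B$ as the pulled-back extension
\[ 0 \to \pi^*K_B \to \mc{T}_X^* \to \mc{O}_X \to 0,\]
and, since $K_X \cong \pi^*K_B$, tensoring the dual sequence by $\pi^*K_B^{-1}$ gives the corresponding tangent sequence
\[ 0 \to \mc{O}_X \to \mc{T}_X \to \pi^*K_B^{-1} \to 0.\]
For the Higgs case I would apply Proposition \ref{extension} with $V_1 = \pi^*K_B$ and $V_2 = \mc{O}_X$. Non-trivial $g$-stable pairs with values in $V_2 = \mc{O}_X$ are excluded by Proposition \ref{endo}, since any stable trace-free $\phi \in H^0(X,\End E)$ must be a scalar multiple of $\id_E$ and hence vanish. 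A non-trivial $g$-stable pair with values in $V_1 = \pi^*K_B$ requires, by Proposition \ref{tracefree}, that $\deg_g(\pi^*K_B) \geq 0$; but $\deg_g(\pi^*K_B) = \deg K_B = 2g(B)-2$ by Section \ref{Line bundles}, so $g(B) \geq 1$. The co-Higgs case is entirely symmetric, yielding $g(B) \leq 1$ from $\deg_g(\pi^*K_B^{-1}) \geq 0$.

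The main subtle point is the boundary case $g(B)=1$, where the degree inequality from Proposition \ref{tracefree} is non-strict. Here $K_B$ is trivial, so $\pi^*K_B \cong \mc{O}_X \cong \pi^*K_B^{-1}$, and the relevant pair degenerates into an endomorphism-valued pair on $X$; Proposition \ref{endo} then forces it to vanish, contradicting non-triviality. Thus $g(B) = 1$ is excluded in both scenarios, leaving $g(B) \geq 2$ in the Higgs case and $g(B) = 0$ in the co-Higgs case. To finish, Hitchin's existence result for stable Higgs bundles on curves of genus $\geq 2$ and Rayan's existence result for stable co-Higgs bundles on $\mathbb{P}^1$ (both cited in the introduction) immediately produce a non-trivial stable trace-free (co)-Higgs bundle on $B$, completing the proof.
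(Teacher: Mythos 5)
Your proof is correct and follows essentially the same route as the paper's: decompose $\mc{T}_X^*$ (resp.\ $\mc{T}_X$) via Proposition \ref{cotangent}, rule out the $\mc{O}_X$-valued factor with Proposition \ref{endo}, and apply Propositions \ref{extension} and \ref{tracefree} to constrain $\deg K_B$. If anything, you are slightly more careful than the paper, which leaves the exclusion of the boundary case $g(B)=1$ (where $K_X\cong\mc{O}_X$ and Proposition \ref{endo} must be invoked a second time) and the final appeal to Hitchin's and Rayan's existence results on $B$ implicit.
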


\begin{proof}
Suppose first that $X$ has a non-trivial stable co-Higgs bundle. Then $\mc{T}_X\cong (\pi^*\mc{A}_B)^\vee$ by Proposition \ref{cotangent}, so it is an extension of the anti-canonical bundle by $\mc{O}_X$. As shown in Proposition \ref{endo}, there are no non-zero stable pairs $(E,\phi)$ with $\phi \in \End_0(E)$, so we conclude that $X$ admits a non-zero trace-free stable pair $(E,\phi:E \to E\otimes K_X^{-1})$ by Proposition \ref{extension}. This is only possible if $B$ has genus $0$.
If we now assume that that $X$ has a non-trivial stable Higgs bundle, a similar argument shows that $B$ has genus at least $2$.
\end{proof}

\begin{remark}
If $\pi:X\to B$ is a principal elliptic surface, any Gauduchon metric $g$ on $X$ can be normalised so that $\deg_g(\pi^*(H))=\deg(H)$ for all $H \in \Pic(B)$ \cite{LuTe}.
\end{remark}

\subsection{Bundles on non-K\"ahler elliptic surfaces}

We now restrict ourselves to the non-K\"ahler case. Let $\pi: X \rightarrow B$ be a non-K\"ahler principal elliptic surface, with $B$ a smooth compact connected curve. Let $T$ be the fibre of $\pi$, which is an elliptic curve, and denote its dual $T^*$ (we fix a non-canonical identification $T^* := \Pic^0(T)$). In this case, the relative Jacobian of $\pi: X \rightarrow B$ is simply $J(X) = B \times T^* \stackrel{p_1}{\rightarrow} B$.

\subsubsection{Filtrability and destabilising bundles}
Rank-2 holomorphic vector bundles on compact non-algebraic complex surfaces break up into two categories: those which are filtrable, and those which are non-filtrable, where filtrability is defined as follows:
\begin{definition}
  A rank-2 sheaf $\mc{E}$ on a complex surface is \emph{filtrable} if it admits a rank-1 subsheaf.
  A rank-2 sheaf is \emph{non-filtrable} if every non-zero subsheaf has rank two.
\end{definition}
\begin{remark}
This distinction is unique to non-algebraic complex surfaces;
a rank-two coherent sheaf defined over an algebraic surface is always filtrable.
\end{remark}

To study the existence of filtrable bundles, we introduce the invariant \[m(2,\delta):=-\frac{1}{2} \max\left\{\left(\frac{c_1(\delta)}{2}-\mu\right)^2: \mu\in NS(X)\right\}.\]
(The invariant $m(r,\delta)$ generally depends on the rank $r$, but has a simpler formula when $r=2$.) A result of B\u{a}nic\u{a} and Le Potier \cite[Th\'{e}or\`{e}me 2.3]{BanicaLePotier} states that there is a rank-2 filtrable bundle on a compact non-K\"ahler surface $X$ with determinant $\delta$ and second Chern class $c_2$ if and only if $\Delta(2,\delta,c_2)\geq m(2,\delta)$. Note that if $X$ is a non-K\"ahler principal elliptic surface with genus 1 fibre $T$ and  base $B$, and $\delta$ is chosen so that $c_1(\delta)$ corresponds to a finite map $\gamma:B\to T$ of minimal degree, then \[m(2,\delta)=-\frac{c_1(\delta)^2}{8}=\frac{\deg(\gamma)}{4}.\]
\begin{remark}\label{m-invariant}
For $B$ of genus 1, the curves $B$ and $T$ can be chosen to have $\deg(\gamma)$ be any positive integer by standard elliptic curve theory. And for $B$ of genus 2, the curves $B$ and $T$ can be chosen to have the degree of the minimal map $\gamma$ be any integer strictly greater than $1$ \cite[Theorem 3]{Kani}. Since $m(2,\delta)=\deg(\gamma)/4$ in such cases, this means that $\delta$ can be chosen to make $m(2,\delta)$ arbitrarily large.
\end{remark}

Let $\mc{E}$ be a rank-2 sheaf with determinant $\delta$ and second Chern class $c_2$. We say that $\mc{E}$ (or alternatively its determinant and second Chern class) is \emph{in the non-filtrable range} if \[-\frac{e_\delta}{4}\leq \Delta(2,\delta,c_2)<m(2,\delta).\] The first of the two inequalities ensures that bundles with these invariants exist, and the second inequality forces any such bundle to be non-filtrable.
\begin{remark}
    In the genus-2 case, we can always choose $B$, $T$, and $\delta$ so that the non-filtrable range is non-empty by Remark \ref{m-invariant}.
\end{remark}

Clearly, if a rank-2 bundle is non-filtrable, then it is automatically stable with respect to any Gauduchon metric.
Stability of reducible bundles can be investigated using their maximal destabilising bundles.
\begin{definition}
    Let $X$ be a compact complex surface with Gauduchon metric $g$, and let $E$ be a rank-2 vector bundle on $X$. A locally-free subsheaf $F\subseteq E$ is a \emph{maximal destabilising bundle} for $E$ with respect to $g$ if $\rank{F}=1$, $E/F$ is torsion-free, and for any positive-degree line bundle $L$, we have $H^0(X,\Hom(F\otimes L, E))=0$.
\end{definition}

Maximal destabilising bundles satisfy the following properties:
\begin{proposition}[Friedman \cite{Fried}]
Let $E$ be a rank-2 vector bundle on $X$ and $\mc{F}\subseteq E$ be a subsheaf with $0<\rank{\mc{F}}<\rank{E}$. Then: 
\begin{enumerate}
\item
There exists an effective divisor $D$ such that $\mc{F}^{\vee\vee}\otimes \mc{O}_X(D)$ is a maximal destabilising bundle of $E$.
\item
For any maximal destabilising bundle $L$ of $E$, there is a finite analytic subspace $Z\subset X$ such that 
\[0 \rightarrow L \rightarrow E \rightarrow \det(E)\otimes L^{-1}\otimes \mc{I}_Z \rightarrow 0\]
is an exact sequence, where $\mc{I}_Z$ is the ideal sheaf of $Z$.
\end{enumerate}
\end{proposition}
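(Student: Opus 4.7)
The plan is to treat the two parts of the proposition separately, with part~(1) being the substantive claim and part~(2) following from the structure of rank-$1$ torsion-free sheaves on a smooth surface.

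For part~(1), I would first pass from $\mc{F}$ to its reflexive hull $\mc{F}^{\vee\vee}$, which is a line bundle because reflexive rank-$1$ sheaves on a smooth surface are automatically invertible. Using that $E$ is locally free, the inclusion $\mc{F}\hookrightarrow E$ extends uniquely to an injection $\mc{F}^{\vee\vee}\hookrightarrow E$; concretely, $\mc{F} \simeq \mc{F}^{\vee\vee}\otimes \mc{I}_W$ for some zero-dimensional $W\subset X$, and a map from $\mc{I}_W$ into the locally free sheaf $E\otimes(\mc{F}^{\vee\vee})^{-1}$ extends uniquely to $\mc{O}_X$. To produce the divisor $D$, I would then select a line subbundle $L\subseteq E$ containing $\mc{F}^{\vee\vee}$ of maximal $\deg_g L$. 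Such an $L$ exists by an a priori boundedness argument on degrees of line subbundles of $E$, and any such $L$ has the form $L = \mc{F}^{\vee\vee}(D)$ for some effective divisor $D$, because the quotient $L/\mc{F}^{\vee\vee}$ is a cyclic torsion sheaf of line bundles, hence isomorphic to $\mc{O}_D$ for some effective $D$. Finally, I would verify that $L = \mc{F}^{\vee\vee}(D)$ satisfies the defining properties of a maximal destabilising bundle: maximality of $\deg_g L$ forces $E/L$ to be torsion-free (otherwise the divisorial part of the torsion would enlarge $L$), and forces $H^0(X, \Hom(L\otimes N, E)) = 0$ for any positive-degree line bundle $N$ (any nonzero such map would, after saturating its image, produce a line subbundle of $E$ of strictly greater degree than $L$, contradicting maximality).

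For part~(2), given a maximal destabilising bundle $L\hookrightarrow E$, the quotient $E/L$ is torsion-free of rank~$1$ on the smooth surface $X$, hence isomorphic to $M\otimes \mc{I}_Z$ with $M := (E/L)^{\vee\vee}$ a line bundle and $Z$ the finite analytic subspace where $E/L$ fails to be locally free. Taking determinants of $0\to L\to E\to M\otimes \mc{I}_Z \to 0$ and using $\det(M\otimes \mc{I}_Z) = M$ yields $\det E = L\otimes M$, whence $M = \det(E)\otimes L^{-1}$, giving the desired sequence.

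The main obstacle lies in the boundedness step in part~(1). On a non-K\"ahler Gauduchon surface, $\deg_g$ takes a continuum of values on $\Pic(X)$, so one must both bound the degrees of line subbundles of $E$ from above and ensure the supremum is attained. The upper bound follows from a Gauduchon estimate, since a line subbundle $L'$ of too large a degree would give too negative a degree for $E\otimes (L')^{-1}$, contradicting the existence of a nonzero global section; attainment of the supremum then uses the boundedness of the family of rank-$1$ subsheaves of $E$ with degree in a compact interval, as established in the B\u{a}nic\u{a}--Le Potier results cited earlier in this section.
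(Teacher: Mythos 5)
The paper offers no proof of this proposition at all --- it is quoted from Friedman with a citation --- so your argument has to stand on its own. Your part~(2) is fine: on a smooth surface the torsion-free rank-$1$ quotient $E/L$ is $M\otimes\mc{I}_Z$ with $M=(E/L)^{\vee\vee}$ invertible and $Z$ finite, and multiplicativity of determinants gives $M=\det(E)\otimes L^{-1}$. In part~(1), the passage from $\mc{F}$ to a genuine sub-line bundle $\mc{F}^{\vee\vee}\otimes\mc{O}_X(D)\hookrightarrow E$ with torsion-free quotient is also correct, and in fact requires no maximisation at all: $D$ is simply the divisorial part of the support of the torsion of $E/\mc{F}^{\vee\vee}$, so the boundedness discussion at the end is not where the difficulty lies.

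The genuine gap is in your final verification. You maximise $\deg_g$ only over line subbundles of $E$ \emph{containing} $\mc{F}^{\vee\vee}$, whereas the third condition in the paper's definition of a maximal destabilising bundle quantifies over \emph{all} nonzero maps $L\otimes N\to E$ with $\deg_g N>0$. Such a map is automatically injective, and its saturation is a line subbundle of degree strictly greater than $\deg_g L$ --- but that subbundle has no reason to contain $\mc{F}^{\vee\vee}$, so it does not contradict your restricted maximality. The gap is not cosmetic. Take $X$ non-K\"ahler elliptic, $N_0$ a line bundle with $\deg_g N_0>0$ and $H^0(X,N_0)=0$ (a suitable $L_a$ works), $E=\mc{O}_X\oplus N_0$ and $\mc{F}=\mc{O}_X$ the first factor. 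The only sub-line bundle of $E$ containing $\mc{O}_X$ is $\mc{O}_X$ itself (extending the inclusion to $\mc{O}_X(D)\to E$ forces the first component to be a section of $\mc{O}_X(-D)$ equal to $1$ off $D$, whence $D=0$), so your procedure outputs $L=\mc{O}_X$; yet $H^0(X,\Hom(\mc{O}_X\otimes N_0,E))\supseteq\Hom(N_0,N_0)\neq 0$ with $\deg_g N_0>0$, so $\mc{O}_X$ violates the stated condition, and no effective twist $\mc{O}_X(D)$ can coincide with the degree-maximal subbundle $N_0$, since $N_0$ is not effective. So either ``positive-degree line bundle'' in the definition must be read as ``$\mc{O}_X(D')$ for a nonzero effective $D'$'' --- in which case the proposition reduces to the saturation statement you did prove and the maximisation is superfluous --- or the claim needs an argument your proof does not supply. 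At minimum you should flag this mismatch between the object you construct and the condition you are asked to verify.
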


For certain classes of surfaces, one can compute the destabilising bundles, which simplifies the process for checking stability.
In the case of non-K\"ahler elliptic surfaces, we have the following:
\begin{proposition}[Brînz\u{a}nescu--Moraru \cite{BrMor2}, Proposition 3.3 and Theorem 3.5]
    If $\pi:X\rightarrow B$ is a non-Kähler elliptic surface and $E$ is a rank-2 vector bundle on $X$, then $E$ has at most two maximal destabilising bundles, and a maximal destabilising bundle $K$ for $E$ is unique if and only if $\det(E)\otimes K^{-2}\in \pi^*\Pic(B)$. If $E$ is an extension of line bundles with two distinct destabilising bundles $K_1, K_2$, they satisfy the relation \[K_1\otimes K_2 \otimes \det(E)^{-1}\simeq \mc{O}_X\left(\sum\limits_{F \in A(E)} -F\right),\] where $A(E)$ is the set of fibres $F$ of $\pi$ such that $E\vert_F$ is not split. When $E$ is not an extension of line bundles, there is an allowable elementary modification of $E$ with the same maximal destabilising bundles.
\end{proposition}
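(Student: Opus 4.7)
The plan is to analyse maximal destabilising bundles fibrewise via a spectral-curve construction for $E$. On a generic fibre $F=\pi^{-1}(b)$, the bundle $E|_F$ is a rank-$2$ vector bundle on an elliptic curve and thus splits as a sum of two line bundles $L_1(b)\oplus L_2(b)$ of maximal destabilising degree. Assembling these data gives a spectral curve $S_E\subset J(X)=B\times T^*$ which projects to $B$ as a cover of generic degree at most two (extending the construction of $S_\delta$ recalled in Section~\ref{Line bundles} from line bundles to rank-$2$ bundles). A maximal destabilising bundle $K\subseteq E$ must then restrict on each generic fibre to one of $L_1(b), L_2(b)$, so its class modulo $\pi^*\Pic(B)$ determines a section of $S_E\to B$, and the rigidity condition $H^0(X,\Hom(F\otimes L, E))=0$ for positive-degree $L$ pins $K$ down within its $\pi^*\Pic(B)$-orbit.

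With this correspondence, parts (1) and (2) follow quickly: since $S_E\to B$ has generic degree at most two, there are at most two such sections, hence at most two maximal destabilising bundles; and $K$ is unique precisely when $S_E$ is a single section of $J(X)\to B$, i.e.\ when the fibrewise line subbundles $K|_F$ and $(\det(E)\otimes K^{-1})|_F$ agree for every $F$, which translates into $\det(E)\otimes K^{-2}\in\pi^*\Pic(B)$.

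For part (3), with $K_1, K_2$ distinct maximal destabilising bundles, I would examine the natural morphism $\alpha\colon K_1\oplus K_2\to E$ induced by the two inclusions. Fibrewise, $\alpha|_F$ is an isomorphism whenever $E|_F$ splits as a sum of two distinct line bundles, and drops rank when $E|_F$ is a non-split self-extension of a single line bundle, i.e.\ when $F\in A(E)$. A local computation near such an $F$, using that on an Atiyah-type extension the two "sections" $K_1$ and $K_2$ meet transversally to first order along $F$, shows that the determinant of $\alpha$ vanishes to order exactly one along each $F\in A(E)$. Globally, this produces an isomorphism $\det(\alpha)\colon K_1\otimes K_2\stackrel{\sim}{\to} \det(E)\otimes\mc{O}_X\bigl(-\sum_{F\in A(E)} F\bigr)$, giving the desired relation. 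The hardest step will be this local analysis, since controlling the precise order of vanishing requires a careful calculation in the infinitesimal neighbourhood of each $F\in A(E)$ from the extension data of $E$ there.

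Finally, for part (4), when $E$ is not an extension of line bundles the reflexive hull of a maximal rank-$1$ subsheaf fails to be a subbundle at a finite set of points; performing an allowable elementary modification of $E$ at those points produces a new bundle $\tilde{E}$ that is an extension of line bundles. The key observation is that both the spectral curve $S_\cdot$ and the set $A(\cdot)$ are determined by the generic fibrewise behaviour and are invariant under such modifications, so $\tilde{E}$ shares the maximal destabilising bundles of $E$ and the earlier cases apply.
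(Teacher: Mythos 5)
First, a point of comparison: the paper does not prove this statement at all --- it is imported verbatim from Br\^inz\u{a}nescu--Moraru \cite{BrMor2} (Proposition 3.3 and Theorem 3.5) and used as a black box. So there is no in-paper argument to measure you against, and your proposal has to stand on its own. Its overall architecture (restrict to fibres, read off the spectral bisection of $J(X)\to B$, identify maximal destabilising bundles with liftings of its sections, and compute $\det\alpha$ for $\alpha:K_1\oplus K_2\to E$) is indeed the standard Br\^inz\u{a}nescu--Moraru strategy, and parts (1)--(3) of your sketch are pointed in the right direction. Two substantive gaps remain there. For the ``only if'' half of the uniqueness criterion you assert that $K$ is unique ``precisely when'' $S_E$ is a single section, but the hard direction is existence: when the two fibrewise factors are generically distinct you must actually produce a second sub-line bundle of $E$ realising the other section (e.g.\ by splitting the extension $0\to K\to E\to \det(E)\otimes K^{-1}\otimes\mc{I}_Z\to 0$ on the generic fibre, where $\mathrm{Ext}^1$ vanishes, and saturating); without this, ``at most two'' is proved but ``exactly two unless $\det(E)\otimes K^{-2}\in\pi^*\Pic(B)$'' is not. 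In part (3) you correctly identify that $\mathrm{div}(\det\alpha)$ is a sum of fibres supported on $A(E)$ (note this uses that an extension of line bundles has no unstable fibres, otherwise jump fibres would also contribute), but the multiplicity-one claim is exactly the content of the cited theorem and cannot be waved at; you should also rule out contributions from fibres with $E\vert_F\simeq\lambda\oplus\lambda$, which are split yet could a priori have $K_1\vert_F=K_2\vert_F$ inside $E\vert_F$.

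Part (4) is where the proposal goes wrong rather than merely being incomplete. In this paper an \emph{allowable elementary modification} is by definition performed along an entire fibre $\pi^{-1}(b)$ at a \emph{jump} of $E$, by a negative-degree line bundle on that fibre --- not ``at the finite set of points'' where the reflexive hull of a rank-one subsheaf fails to be a subbundle. The correct route is: if $E$ is not an extension of line bundles, the quotient $\det(E)\otimes K^{-1}\otimes\mc{I}_Z$ has $Z\neq\emptyset$, so saturating $K\vert_F$ on a fibre through a point of $Z$ produces a positive-degree sub-line bundle of $E\vert_F$; hence $E$ has a jump and an allowable elementary modification $E'\subset E$ exists. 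One then checks that every degree-zero sub-line bundle $K\subset E$ factors through $E'$ (because $\Hom(K\vert_F,\lambda)=0$ for $\deg\lambda<0$) and that maximality is preserved both ways. Your stated justification --- that the spectral curve and $A(\cdot)$ are ``invariant under such modifications'' --- is false: an elementary modification along $\pi^{-1}(b)$ changes the spectral curve precisely over $b$ and multiplies $\det(E)$ by $\pi^*\mc{O}_B(-b)$. As written, part (4) of the proposal would not compile into a proof.
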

\noindent
 Elementary modifications will be discussed in detail in Section \ref{elementary}.

\subsubsection{The spectral construction}\label{Spectral}
To study bundles on $X$, one of our main tools is restriction to the fibres of $\pi: X \rightarrow B$. It is important to point out that since $X$ is non-K\"ahler, the restriction of {\em any} vector bundle on $X$ to a fibre of $\pi$ {\em always} has trivial first Chern class. A holomorphic vector bundle $E$ on $X$ is then semistable on the generic fibre of $\pi$. In fact, the restriction of $E$ to a fibre $\pi^{-1}(b)$ is unstable on at most an isolated set of points $b \in B$; these isolated points are called the {\em jumps} of the bundle. Furthermore, there exists a divisor $S_E$ in the relative Jacobian $J(X)$ of $X$, called the {\em spectral curve} or {\em cover} of the vector bundle $E$, that encodes the isomorphism class of $E$ over the fibres of $\pi$. 

The spectral curve can be described more concretely as 
\[S_E=\{(b,\lambda) \in B\times T^*: H^1(T, E\vert_{\pi^{-1}(b)}\otimes \lambda)\neq 0\},\] 
with the multiplicity of $(b,\lambda)$ given by $h^1(T, E\vert_{\pi^{-1}(b)}\otimes \lambda)$. 
Since the restriction morphism 
\[\iota^*:H^2(X,\mathbb{Z})\to H^2(T,\mathbb{Z})\]
is zero \cite{Teleman}, we have that $E\vert_{\pi^{-1}(b)}\otimes \lambda$ is a degree zero bundle for all $(b,\lambda) \in B\times T^*$. 
This fact together with the classification of vector bundles on genus 1 curves gives that $S_E\cap \{b\}\times \Pic^0(T)$ contains at most $\rank(E)$ points if and only if $E\vert_{\pi^{-1}(b)}$ is semistable. 
The spectral curve thus has the form 
\[S_E=\overline{C}+\sum\limits_{b \in Z_E}\ell_b(\{b\}\times T^*)\]
for some positive integers $\ell_b$, where $\overline{C}$ is an $r$-section of $J(X)\to B$ and 
\[Z_E:=\{b \in B: E\vert_{\pi^{-1}(b)} \text{ is unstable}\}.\]

\begin{remark}
    The spectral curve of a holomorphic vector bundle $E$ on $X$ can also be constructed using a twisted Fourier--Mukai transform between the category of coherent sheaves on $X$ and the category of twisted sheaves on $J(X)$ as described in \cite{BrMoFM}.
\end{remark}

\subsubsection{Spectral curves and elementary modifications}\label{elementary}
In order to understand vector bundles with jumps, the main method is to study their \emph{elementary modifications}. 
Given a rank-2 vector bundle $E$ on a complex manifold $X$, a smooth effective divisor $D$, a line bundle $\lambda$ on $D$, and a surjective sheaf map $g:E\vert_D\to \lambda$, the \emph{elementary modification $E'$ of $E$ by $(D,\lambda)$} is the unique vector bundle satisfying the exact sequence 
\[ 0 \rightarrow  E' \rightarrow E \rightarrow \iota_*\lambda \rightarrow 0, \]
where $\iota:D\to X$ is the inclusion map. 
The invariants of an elementary modification are given by 
\begin{align*}\det(E')=\det(E)\otimes \mc{O}_X(-D), && c_2(E')=c_2(E)+c_1(E).[D]+\iota_*c_1(\lambda).\end{align*}
In the case that $\pi:X\to B$ is a non-K\"ahler elliptic surface and $D$ is a prime divisor, the divisor $D$ is of the form $\pi^{-1}(b)$ for some $b \in B$. 
Since $D$ has torsion first Chern class, the determinant and second Chern class are related by 
\begin{align*}
	\det(E')=\det(E)\otimes \pi^*(\mc{O}_B(-b)), && c_2(E')=c_2(E)+\deg(\lambda),
\end{align*}
and the elementary modification $E'$ has $\ell(E',b)=\ell(E,b)+\deg(\lambda)$ in this case.

If a vector bundle $E$ has a jump at $b$, there is a unique elementary modification of $E$ along $\pi^{-1}(b)$ by a negative degree bundle, called the \emph{allowable elementary modification} of $E$ at $b$ \cite[Section 4.1.2]{MorHopf}; 
in particular, since $E\vert_{\pi^{-1}(b)}$ is unstable, it is of the form $\lambda \oplus (\lambda^*\otimes \det(E)\vert_{\pi^{-1}(b)})$ for some $\lambda \in \Pic^{-h}(T^*)$ with $h>0$, with the map $g:E\vert_{\pi^{-1}(b)}\to \lambda$ given by projection onto the first coordinate. The positive integer $h$ is called the {\em height} of the jump. Moreover, we can associate to $E$ a finite sequence $\{ \bar{E}_1,\bar{E}_2, \dots, \bar{E}_l \}$ of allowable elementary modifications such that $\bar{E}_l$ is the only element in the sequence that does not have a jump at $T$. The integer $l$ is called the {\em length} of the jump at $T$. Note that if $h_0= h$ is the height of $E$ and $h_i > 0$ is the height of the elementary modification $\bar{E}_i$ for $1 \leq i \leq l-1$, then 
\[ h_0 \geq h_1 \geq h_2 \geq \cdots \geq h_{l-1}\]
and 
\[ \mu = \sum_{i=0}^{l-1} h_i.\]
The integer $\mu$ is called the {\em multiplicity} of the jump. Note that the heights $h_0, \cdots , h_{l-1}$ give a partition of the multiplicity $\mu$ into $l$ parts. We denote by $s$ be the number of distinct sizes of these parts. For example, if $h_0 = \dots = h_{l-1}$, then $s=1$. Whereas, if $l=4$ and $h_0 = 4$, $h_1 = h_2 = 2$, $h_3 = 1$, then $s=3$.

The above discussion tells us that any jump can be removed by performing a finite number of allowable elementary modifications. Consequently, we have:

\begin{proposition}[Br\^inz\u{a}nescu--Moraru \cite{BrMor}]\label{FiniteUnstable}
	If $E$ is a rank-$2$ vector bundle on $X$, then $E$ has finitely many jumps, and 
 \[\sum\limits_{b\in Z_E} \ell(E, b)\leq 2\Delta(E).\]
\end{proposition}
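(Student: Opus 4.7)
The plan is to use the allowable elementary modification procedure to simultaneously control $|Z_E|$ and $\sum_b \ell(E,b)$ via the discriminant $\Delta$. The proof rests on a single discriminant computation combined with the universal lower bound $\Delta(F) \geq -e_{\det F}/4 \geq 0$ for any rank-$2$ bundle $F$ on $X$, guaranteed by Theorem 4.5 of \cite{BrMor}.

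Given a jump $b \in Z_E$, consider an allowable elementary modification $E \rightsquigarrow E'$ at $b$ with destabilising quotient $\lambda$ of degree $-h$. Writing $F = \pi^{-1}(b)$, we have $F^2 = 0$ because $F$ is a fibre, and $c_1(E)\cdot F = 0$ because the restriction map $H^2(X,\mathbb{Z}) \to H^2(T,\mathbb{Z})$ vanishes. Combined with the invariant formulas $\det(E') = \det(E) \otimes \pi^*\mc{O}_B(-b)$ and $c_2(E') = c_2(E) - h$ recalled in Section \ref{elementary}, this gives $c_1(E')^2 = c_1(E)^2$ and hence $\Delta(E') = \Delta(E) - h/2$ in the normalisation of $\Delta$ for which the filtrability criterion $\Delta \geq m(2,\delta)$ is sharp. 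Simultaneously $\ell(E',b) = \ell(E,b) - h$, while $E$ and $E'$ agree off $F$.

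Next, at each $b \in Z_E$ a chain of $l_b$ successive allowable modifications with heights $h_0^{(b)} \geq \cdots \geq h_{l_b-1}^{(b)}$ removes the jump entirely, and telescoping the $\ell$-recursion gives $\sum_i h_i^{(b)} = \ell(E,b)$. Modifications at distinct fibres commute, since each alters $E$ only along a single fibre---formally, the composite of such modifications is the kernel of the surjection $E \to \bigoplus_b \iota_{b,*}\lambda_b$, which is symmetric in $b$. Thus for any finite subset $\{b_1,\ldots,b_n\} \subseteq Z_E$, carrying out the corresponding chains produces a rank-$2$ bundle of discriminant
\[
\Delta(E) - \tfrac{1}{2}\sum_{k=1}^n \ell(E,b_k) \;\geq\; 0.
\]
Since $\ell(E,b_k) \geq 1$ for each $k$, this forces $n \leq 2\Delta(E)$, so $Z_E$ is finite; and taking $\{b_1,\ldots,b_n\} = Z_E$ in the same inequality yields $\sum_{b \in Z_E} \ell(E,b) \leq 2\Delta(E)$.

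The main obstacle is really only bookkeeping: confirming that the discriminant decrement at a single modification is exactly $h/2$ in this normalisation (which is precisely what produces the constant $2$ in the conclusion), and verifying that modifications at distinct fibres genuinely commute so that the discriminant changes accumulate additively as one processes $Z_E$.
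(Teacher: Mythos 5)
Your argument is correct and is essentially a careful write-up of the reasoning the paper only sketches: the proposition is stated here without proof (it is cited to Br\^inz\u{a}nescu--Moraru), and the surrounding text merely says that jumps can be removed by finitely many allowable elementary modifications. Your bookkeeping is right: since $c_1(\pi^*\mc{O}_B(-b))$ is torsion, an allowable modification of height $h$ leaves $c_1^2$ unchanged and drops $c_2$ by $h$, hence drops $\Delta$ by $h/2$ and $\ell(\cdot,b)$ by $h$; telescoping over the chain at each fibre and summing over any finite set of jumps gives $\Delta(E)-\tfrac12\sum_k\ell(E,b_k)\ge 0$, and $\ell(E,b)\ge 1$ then bounds the number of jumps, so passing through finite subsets legitimately yields finiteness without assuming it. The one point you should tighten is the source of the inequality $\Delta(F)\ge 0$: you invoke \cite[Theorem 4.5]{BrMor}, but in that reference the existence criterion $\Delta\ge -e_\delta/4$ is established downstream of (and using) the finiteness of jumps, so quoting it here risks circularity if the goal is an independent proof. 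The clean input is the general fact that the discriminant of any torsion-free sheaf on a non-algebraic compact surface is non-negative (B\u{a}nic\u{a}--Le Potier), which requires no spectral-curve machinery; with that substitution your proof stands on its own. The remark about commutativity of modifications at distinct fibres is harmless but unnecessary --- you can simply process the fibres sequentially, since each modification changes the bundle only along one fibre and the discriminant decrements add up regardless of order.
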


By contrast, elementary modifications by positive-degree line bundles are highly non-unique; in fact, if the restriction of a vector bundle $E$ to a fibre $\pi^{-1}(b)$, $b \in B$, is such that 
\[ E\vert_{\pi^{-1}(b)}\cong L\otimes (L^*\otimes \det(E)\vert_{\pi^{-1}(b)})\] with $L \in \Pic^h(T^*)$, $h\geq 0$, $L\not\cong L^*\otimes \det(E)\vert_{\pi^{-1}(b)}$, 
then $E$ has an elementary modification at $b$ by $\lambda$ for every $\lambda \in \Pic^r(T^*)$ with $r\geq h$ \cite[Section 4.1.3]{MorHopf}. 
Finally, for the case of an elementary modification by a degree zero line bundle, the behaviour of the elementary modification depends on whether the initial bundle is regular.
\begin{definition}\label{regular-def}
	A rank-2 vector bundle $E$ on $X$ is \emph{regular at $b$} for some $b \in B$ if $E\vert_{\pi^{-1}(b)}$ is semi-stable and not isomorphic to $\lambda \oplus \lambda$ for any $\lambda \in \Pic(T)$. Moreover, $E$ is \emph{regular} if it is regular at $b$ for every $b \in B$.
\end{definition}

We finish with the following facts about non-filtrable bundles:
\begin{proposition}\label{smooth-spectral-curve}
	Let $E$ be a rank-2 vector bundle. If $E$ is in the non-filtrable range, then $E$ is regular away from jumps. Moreover, if the spectral curve $S_E$ contains no jumps, then $S_E$ is smooth and the ramification divisor $R$ of $\rho:S_E\to B$ satisfies $\deg{R}=8\Delta(E)$.
\end{proposition}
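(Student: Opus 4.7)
The plan is to prove the three assertions in turn, using the spectral curve machinery developed in Section \ref{Spectral}.

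For the first assertion, assume $E$ lies in the non-filtrable range, so $E$ admits no rank-$1$ subsheaf. Let $b \in B$ be a non-jump point; then $E\vert_{\pi^{-1}(b)}$ is semistable of degree zero, and it suffices to rule out the case $E\vert_{\pi^{-1}(b)} \cong \lambda \oplus \lambda$ for some $\lambda \in \Pic^0(T)$. The strategy is to argue by contradiction via the Fourier--Mukai interpretation of the spectral curve: such a fibrewise double splitting forces the $2$-section component $\bar C$ of $S_E$ to carry a non-reduced structure at $(b, -\lambda)$. The spectral sheaf, whose support is $S_E$, then acquires a horizontal rank-$1$ sub-object near that point, which via the inverse Fourier--Mukai transform produces a rank-$1$ subsheaf of $E$ on $X$, contradicting non-filtrability.

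For the second assertion, suppose $S_E$ contains no jumps, so $S_E = \bar C$ is a pure $2$-section of $p_1 \colon J(X) = B \times T^* \to B$. By the first assertion, $E$ is regular at every $b \in B$, so for each $b$ either $\bar C$ meets $\{b\} \times T^*$ transversely in two distinct reduced points (when $E\vert_{\pi^{-1}(b)} \cong \lambda_1 \oplus \lambda_2$ with $\lambda_1 \neq \lambda_2$) or tangentially at a single reduced point (when $E\vert_{\pi^{-1}(b)}$ is the non-split semistable self-extension of some $\lambda$). A local analysis of the defining condition $h^1(T, E\vert_{\pi^{-1}(b)} \otimes \nu) \geq 1$ in a neighbourhood of each such intersection shows that the resulting local defining equation has non-vanishing differential, so $\bar C$ is smooth.

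For the third assertion, apply Riemann--Hurwitz to the degree-$2$ map $\rho \colon S_E \to B$:
\[
2 g(S_E) - 2 = 2\bigl(2 g(B) - 2\bigr) + \deg R.
\]
Compute $g(S_E)$ by adjunction on the surface $J(X) = B \times T^*$. Write $[S_E] = 2 s + m f$, where $s = [B \times \{\mathrm{pt}\}]$ and $f = [\{\mathrm{pt}\} \times T^*]$ satisfy $s^2 = f^2 = 0$ and $s \cdot f = 1$; since $T^*$ is elliptic, $K_{J(X)} \equiv (2 g(B) - 2)\, f$. Then $[S_E]^2 = 4m$ and $[S_E] \cdot K_{J(X)} = 2(2g(B) - 2)$, so $2g(S_E) - 2 = 4m + 2(2g(B) - 2)$ and therefore $\deg R = 4m$. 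Finally, the coefficient $m = s \cdot [S_E]$ is identified with $2 \Delta(E)$ by a direct Chern-class computation via the spectral / Fourier--Mukai correspondence, yielding $\deg R = 8 \Delta(E)$.

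The main obstacle is the first assertion: pinning down precisely how the fibrewise splitting $E\vert_{\pi^{-1}(b)} \cong \lambda \oplus \lambda$ produces a global rank-$1$ subsheaf of $E$. Filtrability is a global condition, not a pointwise one, so the argument cannot be purely local --- the Fourier--Mukai machinery (or an equivalent use of the relative moduli space of line bundles on fibres) is needed to translate the fibrewise obstruction into the desired global statement.
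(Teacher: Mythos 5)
First, for context: the paper does not prove this proposition internally --- its proof is the single line ``See \cite[Propositions 3.16 and 3.17]{Boulter}'' --- so your attempt is necessarily a reconstruction from scratch, and it must be judged on its own merits.

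The reconstruction has a genuine gap, and it sits exactly where you flag the ``main obstacle.'' Your argument for the first assertion uses only the non-filtrability of $E$ and never invokes the numerical hypothesis $-e_\delta/4\leq\Delta(E)<m(2,\delta)$. That cannot suffice: ``in the non-filtrable range'' is a condition on $(\det E,c_2)$ strictly stronger than $E$ being non-filtrable, and outside this range there exist non-filtrable bundles that fail to be regular at a non-jump point (e.g.\ a bundle whose bisection $\bar C$ is irreducible with a node at $(b,\lambda^{-1})$ and whose spectral sheaf is not locally free there restricts to $\lambda\oplus\lambda$ on $\pi^{-1}(b)$). Concretely, the mechanism you propose breaks down as follows: $E\vert_{\pi^{-1}(b)}\cong\lambda\oplus\lambda$ means the presentation matrix of $R^1$ of the twisted Poincar\'e complex has corank $2$ at $(b,\lambda^{-1})$, so its determinant lies in $\mathfrak{m}^2$ --- this produces a \emph{singular point} of $\bar C$, not a non-reduced or reducible structure. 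At a node or cusp of an irreducible reduced bisection the spectral sheaf admits no ``horizontal rank-$1$ sub-object'' that globalises (the branch ideals at a node do not extend to global subsheaves), so the inverse Fourier--Mukai transform yields no rank-$1$ subsheaf of $E$ and no contradiction with non-filtrability. A correct proof must use the numerical input: non-filtrability makes $\bar C$ irreducible and reduced, adjunction computes $p_a(\bar C)$ from $\Delta(E)$, and the degree-$2$ map $\bar C\to B$ together with the definition of $m(2,\delta)$ (a minimum over the N\'eron--Severi lattice, equivalently over degrees of maps $B\to T^*$) bounds the geometric genus of the normalisation from below; $\Delta<m(2,\delta)$ then forces $p_a=g$, hence smoothness, hence regularity. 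Your proposal inverts this logical order and supplies no substitute for the genus count.

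The later steps inherit problems. In the second assertion, ``a local analysis shows the local defining equation has non-vanishing differential'' is asserted rather than proved: at a point where $E\vert_{\pi^{-1}(b)}\cong\lambda\otimes\mathcal{A}$ the bisection meets the fibre with multiplicity $2$ at a single point, and regularity alone does not exclude a singularity of $\bar C$ there (only the two-distinct-points case is automatic, via intersection multiplicity $1$ with the fibre). In the third assertion you work as if $NS(B\times T^*)=\mathbb{Z}s\oplus\mathbb{Z}f$; for $g(B)\geq 1$ there is a mixed component coming from $\Hom(J(B),T^*)$, the class of a bisection is $2s+mf+\xi$ with $\xi^2\leq 0$, and both intermediate identities $[S_E]^2=4m$ and $m=2\Delta(E)$ fail when $\xi\neq 0$ --- which is precisely the situation in which $m(2,\delta)>0$ and the non-filtrable range is non-empty. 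The robust route is $\deg R=2g(S_E)-2-2(2g(B)-2)=[S_E]^2$ by adjunction plus Riemann--Hurwitz, followed by the identity $[S_E]^2=8\Delta(E)$ for jump-free spectral curves; your final formula is correct only because the two errors cancel.
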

\begin{proof}
    See \cite[Propositions 3.16 and 3.17]{Boulter}
\end{proof}

\subsubsection{Higgs fields}
\label{Vpairs}

%%%%%%%%%%%%%%%%%%%%%%%%%%%%%%%%
%%%%%%%%%%%%%%%%%%%%%%%%%%%%%%%%
%%%%%%%%%%%%%%%%%%%%%%%%%%%%%%%%

In this section, we consider Higgs fields $\phi: E \rightarrow E \otimes V$ where $V = \pi^* W$ for some vector bundle $W$ on $B$.

\begin{proposition}\label{fitrable}
Let $E$ be a rank-2 filtrable vector bundle on $X$ with maximal destabilising bundles $K_1$ and $K_2$. Set $H:=\pi_*(\det(E)^{-1}\otimes K_1\otimes K_2)$.
\begin{enumerate}
    \item[(a)] Suppose that $E$ is regular on the generic fibre of $\pi$. Then,
    \[ h^0(X,\End_0(E)\otimes V) = h^0(B, H \otimes W).\]
    In particular, $H^0(X,\End_0 E\otimes V) \simeq H^0(X,\Hom(E,K_i \otimes V))$ for $i=1,2$, implying that $K_1$ and $K_2$ are both $\phi$-invariant for all $\phi \in H^0(X,\End E\otimes V)$. A Higgs field $\phi: E \rightarrow E \otimes V$ is thus stable if and only if $E$ is.
    
    \item[(b)] Suppose that $E$ is not regular on the generic fibre of $\pi$ so that $K = K_1 = K_2$ is its unique maximal destabilising bundle. We have two cases:
    \begin{enumerate}
        \item[(i)] If $E$ is an extension of line bundles, then $E\simeq K \otimes \pi^*(F)$ for some rank-2 vector bundle $F$ on $B$ that is an extension of $H^{-1}$ by $\mathcal{O}_B$. Furthermore, $ H^0(X,\End E\otimes V)\simeq H^0(B, \End F\otimes W)$ so the Higgs fields on $E$ twisted by $V$ are precisely the pullbacks of Higgs fields on $F$ twisted by $W$.
        
        \item[(ii)] If $E$ is not an extension of line bundles, then 
        \[ h^0(X,\End_0(E)\otimes V)\geq h^0(X,\Hom(E,K \otimes V)) \geq h^0(B,H\otimes W).\] 
        In fact, $H^0(X,\End_0 E\otimes V) \simeq H^0(X,\Hom(E,K \otimes V))$ if $h^0(B,\pi_*(K^{-1} \otimes E) \otimes W) = h^0(B,W)$, in which case $K$ is $\phi$-invariant for all Higgs fields $\phi: E \rightarrow E \otimes V$, and Higgs fields are stable if and only if $E$ is. 
    \end{enumerate}
\end{enumerate}
\end{proposition}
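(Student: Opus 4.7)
The plan is to use the projection formula, which applies since $V = \pi^*W$: $H^0(X, \mathcal{F} \otimes V) = H^0(B, \pi_*\mathcal{F} \otimes W)$ for any coherent $\mathcal{F}$ on $X$. The proof then reduces to computing direct images of $\End_0 E$, $\Hom(E, K_i)$, and related sheaves via the short exact sequences $0 \to K_i \to E \to Q_i \to 0$ from the maximal destabilising bundles (where $Q_i := \det(E) \otimes K_i^{-1} \otimes \mathcal{I}_{Z_i}$), together with the Brînz\u{a}nescu--Moraru identity $K_1 \otimes K_2 \otimes \det(E)^{-1} = \mathcal{O}_X(-\sum_{F \in A(E)} F) = \pi^*\mathcal{O}_B(-A)$. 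This makes $H \simeq \mathcal{O}_B(-A)$ a line bundle, where $A \subset B$ is the effective divisor whose support consists of the base points of fibres in $A(E)$.

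For case (a), I begin by proving the $\phi$-invariance of $K_i$ (part (a.3)). Given $\phi \in H^0(X, \End(E) \otimes V)$, the composition $K_i \hookrightarrow E \xrightarrow{\phi} E \otimes V \twoheadrightarrow Q_i \otimes V$ is a section of $K_i^{-2}\det(E) \otimes \mathcal{I}_{Z_i} \otimes V$, which by the projection formula embeds into $H^0(B, \pi_*(K_i^{-2}\det E) \otimes W)$. On a regular fibre $T$, the restriction $K_i^{-2}\det(E)|_T = \lambda_j\lambda_i^{-1}$ ($j \neq i$) is a non-trivial degree-zero line bundle, so $\pi_*(K_i^{-2}\det E) = 0$ and the composition vanishes. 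For (a.1) and (a.2), I combine invariance with the natural injective map $\Hom(E, K_i) \hookrightarrow \End_0(E)$ (sending $\psi$ to $i_i\psi - (\tr(i_i\psi)/2)\id_E$) and the injection $K_1 \otimes K_2 \otimes \det(E)^{-1} \hookrightarrow \End_0(E)$ coming from the symmetric product map $K_1 \otimes K_2 \to \mathrm{Sym}^2 E$ twisted by $\det(E)^{-1}$, then push both forward to identify $\pi_*\End_0(E) \simeq \pi_*\Hom(E, K_i) \simeq H$. Stability (a.4) is immediate: any proper $\phi$-invariant subsheaf satisfies $\mu \leq \max(\mu(K_1), \mu(K_2)) < \mu(E)$ by maximality of the $K_j$.

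For case (b.i), uniqueness of $K$ gives $\det(E) \otimes K^{-2} = \pi^*M$ for some $M \in \Pic(B)$, so $E \otimes K^{-1}$ is an extension of $\pi^*M$ by $\mathcal{O}_X$ classified by an element of $H^1(X, \pi^*M^{-1})$. The Leray sequence yields $0 \to H^1(B, M^{-1}) \to H^1(X, \pi^*M^{-1}) \to H^0(B, M^{-1} \otimes R^1\pi_*\mathcal{O}_X) \to 0$, where the first term corresponds to pulled-back extensions. I would argue that the hypothesis that $E$ is an extension of line bundles forces the extension class into this pulled-back summand, giving $E \otimes K^{-1} \simeq \pi^*F$ with $F$ an extension of $M = H^{-1}$ by $\mathcal{O}_B$; the projection formula then gives the isomorphism $H^0(X, \End(E) \otimes V) \simeq H^0(B, \End(F) \otimes W)$. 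For (b.ii), since $E$ is not an extension of line bundles, every $\psi \in \Hom(E, K)$ must satisfy $\psi \circ i_K = 0$: otherwise $\psi$ would retract $i_K$, forcing a splitting $E \simeq K \oplus (E/K)$ with $E/K$ locally free, contradicting the hypothesis. Hence post-composition with $i_K$ already lands in $\End_0(E)$, proving the first inequality; pushing forward $0 \to K^2\det(E)^{-1} \to \Hom(E, K) \to \mathcal{I}_Z \to 0$ and comparing with $H$ yields the second. Under the extra assumption $h^0(B, \pi_*(K^{-1}\otimes E) \otimes W) = h^0(B, W)$, both inequalities become equalities, and the invariance and stability arguments of case (a) carry over.

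The main obstacle is identifying in (b.i) which classes in $H^1(X, \pi^*M^{-1})$ correspond to $E$ being an extension of line bundles; this requires analysing the Leray decomposition together with the structure of $R^1\pi_*\mathcal{O}_X$ for non-K\"ahler principal elliptic fibrations. A secondary subtlety in case (a) is showing that $\pi_*\End_0(E) \simeq H$ holds as sheaves (not just generically), i.e., that no spurious torsion appears at the non-regular fibres where $h^0(\End_0 E|_T)$ jumps from $1$ to $3$; this should follow because the extra sections on such fibres do not extend to any neighbourhood.
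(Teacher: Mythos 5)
Your overall architecture matches the paper's: the projection formula, the exact sequences coming from the maximal destabilising bundles, and the Leray decomposition of $\Ext^1(\pi^*H^{-1},\mathcal{O}_X)$ in case (b)(i). Parts (a) and (b)(ii) essentially go through. In (a) you reverse the paper's logic in a productive way: the paper first shows $h^0(X,K_1^{-1}\otimes E\otimes V\otimes\mathcal{I}_Z)=h^0(B,W)$ by squeezing it between the multiples of the identity and $\pi_*(K_1^{-1}\otimes E)=\mathcal{O}_B$, and only then deduces invariance; you prove invariance first via the vanishing of $\pi_*(K_i^{-2}\otimes\det E)$ (non-trivial of degree $0$ on the generic fibre, hence zero push-forward), which is correct and, being a sheaf-level vanishing, in fact also disposes of your ``secondary subtlety'' about $\pi_*\End_0(E)\simeq H$: once $\phi$ preserves $K_1$ over every $\pi^{-1}(U)$, subtracting the induced endomorphism of the quotient (a section of $V$ because $\mathcal{H}om(\mathcal{I}_Z,\mathcal{I}_Z)=\mathcal{O}_X$) lands you in $\Hom(E,K_1\otimes V)$, so the two push-forwards agree as sheaves. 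Two minor imprecisions: your stability argument in (a) records only one direction (the converse needs invariance together with $\mu(K_i)\geq\mu(E)$ when $E$ is not stable), and in (b)(ii) the hypothesis $h^0(B,\pi_*(K^{-1}\otimes E)\otimes W)=h^0(B,W)$ forces only the first inequality to become an equality, not both --- which is all the statement claims anyway.

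The genuine gap is in (b)(i), precisely where you flag ``the main obstacle''. You propose that the hypothesis ``$E$ is an extension of line bundles'' forces the class in $H^1(X,\pi^*M^{-1})$ into the pulled-back summand. This cannot work: every class in $\Ext^1(\pi^*M,\mathcal{O}_X)=H^1(X,\pi^*M^{-1})$ is by construction an extension of the line bundle $\pi^*M$ by $\mathcal{O}_X$, so that hypothesis excludes nothing. The hypothesis that does the work is the case-(b) assumption that $E$ is \emph{not regular on the generic fibre}: under the Leray splitting $H^1(X,\pi^*H)\cong H^1(B,H)\oplus H^0(B,H\otimes R^1\pi_*\mathcal{O}_X)\cong H^1(B,H)\oplus H^0(B,H)$, the value of the second component at $b\in B$ computes the extension class of $E|_{\pi^{-1}(b)}$ as a self-extension of $K|_{\pi^{-1}(b)}$, and where it is non-zero the fibre restriction is the Atiyah-type (hence regular) bundle. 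Generic non-regularity therefore kills the $H^0(B,H)$-component, the class lies in $H^1(B,H)$, and so is pulled back from an extension $F$ of $H^{-1}$ by $\mathcal{O}_B$. Without identifying the $H^0(B,H)$-summand with fibrewise regularity, (b)(i) does not close.
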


\begin{remark}
By \cite[Proposition 3.4]{BrMor2}, the bundle $\det(E)^{-1}\otimes K_1\otimes K_2=\pi^*(H)$ for some $H \in \Pic(B)$ whenever $E$ is filtrable. Moreover, $h^0(B,\pi_*(K^{-1} \otimes E) \otimes W) = h^0(B,W)$ when $h^0(X,\pi^* H^{-1} \otimes V) = 0$.
\end{remark}

\begin{proof}
Since $E$ is filtrable with maximal destabilising bundle $K_1$, it fits into an exact sequence of the form
\begin{equation}\label{exact-filtrable-Higgs}
 0 \rightarrow K_1 \stackrel{i}{\rightarrow} E \stackrel{p}{\rightarrow} \delta\otimes K_1^{-1}\otimes \mc{I}_Z \rightarrow 0,   
\end{equation}
where $\delta=\det(E)$ and $Z$ is a zero-dimensional subset of $X$. 

Let us first assume that $E$ is regular on the generic fibre of $\pi$. Then, $\pi_*(K_1^{-1}\otimes E)=\pi_*(K_2^{-1}\otimes E)=\mc{O}_B$.
To determine the dimension of the space of trace-free Higgs fields on $E,$ we tensor the exact sequence \eqref{exact-filtrable-Higgs} by $E^\vee\otimes V=\delta^{-1}\otimes E\otimes V$ and look at cohomology. This gives the left-exact sequence 
\[ 0 \rightarrow H^0(X,\\Hom(E, K_1 \otimes V)) \rightarrow H^0(X,\End(E)\otimes V) \rightarrow H^0(X,K_1^{-1}\otimes E\otimes V \otimes \mc{I}_Z).\]
Note that for any element of $H^0(X,\End(E)\otimes V)$ of the form $\id_E\otimes s$ with $s \in H^0(X,V)$, its image in $H^0(X, K_1^{-1}\otimes E \otimes V \otimes \mc{I}_Z)$ is zero if and only if $s=0$, so 
\[ h^0(X,K_1^{-1}\otimes E\otimes V \otimes \mc{I}_Z)\geq h^0(X,V)  = h^0(B,W).\] 
Since $K_1^{-1}\otimes E\otimes V \otimes \mc{I}_Z$ is a subsheaf of $K_1^{-1}\otimes E\otimes V$, we also have 
\begin{align*}
h^0(X,K_1^{-1}\otimes E\otimes V \otimes \mc{I}_Z) &\leq h^0(X, K_1^{-1}\otimes E\otimes V)\\
&=h^0(B, \pi_*(K_1^{-1}\otimes E)\otimes W) = h^0(B, W).
\end{align*}
Therefore, $H^0(X,V) \simeq  H^0(X, K_1^{-1}\otimes E\otimes V\otimes \mc{I}_Z)$ and $H^0(X,\End_0(E)\otimes V) \simeq H^0(X, \Hom(E, K_1 \otimes V))$. Note that
\begin{align*} 
\Hom(E, K_1 \otimes V) &\simeq E^\vee \otimes K_1 \otimes V \simeq E \otimes \delta^{-1} \otimes K_1 \otimes V \simeq E \otimes K_2^{-1}\otimes \pi^*(H)\otimes V \\
&\simeq (K_2^{-1} \otimes E) \otimes \pi^*(H \otimes W).
\end{align*}
Consequently,
\begin{align*}
H^0(X,\End_0(E)\otimes V)&\simeq H^0(X,(K_2^{-1} \otimes E) \otimes \pi^*(H \otimes W))\\
&= H^0(B, \pi_*(K_2^{-1}\otimes E)\otimes H\otimes W) = H^0(B, H\otimes W).
 \end{align*}
Since every non-trivial Higgs field $\phi$ on $E$ is of the form 
\[ \phi=(\iota\otimes \id_V)\circ \psi-\frac{1}{2}\mathrm{tr}((\iota\otimes \id_V)\circ \psi)\] 
for some $\psi \in \Hom(E, K_1\otimes V)$, this means that $K_1$ is always $\phi$-invariant; a similar argument shows that $K_2$ is also $\phi$-invariant, implying that $(E, \phi)$ is stable if and only if $E$ is. This proves (a).
 
 Let us now assume that $E$ is not regular of the generic fibre of $\pi$ so that $K_1 = K_2$. In other words, $K = K_1 = K_2$ is the unique maximal destabilising bundle of $E$. If $E$ is an extension of line bundles, then $Z = \emptyset$ and the exact sequence \eqref{exact-filtrable-Higgs} becomes
 \begin{equation}\label{extension-O}
 0 \rightarrow \mc{O}_X \rightarrow K^{-1}\otimes E \rightarrow \pi^*H^{-1} \rightarrow 0
 \end{equation}
 after tensoring by $K^{-1}$.
Recall that extensions of the form \eqref{extension-O} are parameterised by $H^1(X,\pi^*H)$, and $H^1(X,\pi^*H)\cong H^0(B, H)\oplus H^1(B, H)$ by the Leray spectral sequence. Furthermore, any extension whose representative has non-zero first factor in this decomposition will be regular on the generic fibre, so those which are not regular on the generic fibre of $\pi$ are parameterised by $H^1(B, H)$. This group also parameterises extensions of $H^{-1}$ by $\mc{O}_B$ on $B$, and the pullback of such an extension is not regular on the generic fibre of $\pi$. Therefore, every extension of $\pi^*H^{-1}$ by $\mc{O}_X$ that is not regular on the generic fibre of $\pi$ is a pullback. Let $F$ be the rank-2 bundle on $B$ such that $K\otimes \pi^*F\cong E$. Then, 
\[ H^0(X,\End E\otimes V)=H^0(B,\pi_*(\End E)\otimes W)=H^0(B, \End F\otimes W),\]  
proving (b) (i).

Finally, let us assume that $E$ is not regular on the generic fibre of $\pi$ and is not an extension of line bundles. Then, $Z \neq \emptyset$ and $\pi_*(K^{-1}\otimes E)$ is a rank 2 vector bundle on $B$ given by an extension of the form 
\begin{equation}\label{non-reg-pushforward}0 \rightarrow \mc{O}_B \rightarrow \pi_*(K^{-1}\otimes E) \rightarrow L\rightarrow 0
\end{equation}
with $L = H^{-1} \otimes \pi_*(\mc{I}_Z) \in \Pic(B)$. By taking the tensor product of the exact sequence \eqref{exact-filtrable-Higgs} with $E^\vee\otimes V=\delta^{-1}\otimes E \otimes V$, we obtain 
\[ 0 \rightarrow \Hom(E, K \otimes V) \stackrel{i}{\rightarrow} \End(E) \otimes V \stackrel{p}{\rightarrow} K^{-1}\otimes E \otimes V \otimes \mc{I}_Z \rightarrow 0,\]
and as in the previous cases, the multiples of the identity in $\End(E)\otimes V$ are mapped injectively by $p$, so the space of trace-free Higgs fields contains a subspace isomorphic to 
\[ H^0(X, \Hom(E,K \otimes V))\simeq H^0(B, \pi_*(K^{-1}\otimes E)\otimes H\otimes W) = h^0(B, H\otimes W)\] 
by \eqref{non-reg-pushforward} because $h^0(B,\pi_*(\mc{I}_Z)) = h^0(X,\mc{I}_Z) = 0$ since $Z \neq \emptyset$. Therefore,
\[ h^0(X,\End_0(E)\otimes V)\geq h^0(B, \Hom(E,K \otimes V)) = h^0(B, H\otimes W).\]
In fact, $H^0(X,\End_0 E\otimes V) \simeq H^0(X,\Hom(E,K \otimes V)$ whenever $h^0(B,\pi_*(K^{-1} \otimes E) \otimes W) = h^0(B,W)$, in which case $K$ is $\phi$-invariant for all Higgs fields $\phi: E \rightarrow E \otimes V$, and Higgs fields are stable if and only if $E$ is. 
\end{proof}

Let us now consider the case where the underlying bundle $E$ in not filtrable. We first treat the case where the bundle has no jumps; we focus on the case where $E$ is regular on every fibre of $\pi$ as it is the only case necessary for our applications in sections \ref{Hopf} and \ref{geq2}. The general case can be analysed using the same techniques.

\begin{proposition}\label{non-filt-reg}
    Let $E$ be a rank-2 non-filtrable vector bundle on $X$ that is regular on every fibre of $\pi: X \rightarrow B$, and let $\rho:C \to B$ be the spectral curve of $E$. If $R$ is the ramification divisor of $\rho$, then $\pi_*(\End_0E)$ is a line bundle $N \in \Pic(B)$ such that $N^2\simeq \mc{O}_B(-R)$ and  $\deg N = -4\Delta(E)$, and $H^0(X,\End_0E\otimes \pi^*(W))\simeq H^0(B,N\otimes W)$ for any vector bundle $W$ on $B$.
\end{proposition}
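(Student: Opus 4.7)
The plan is to identify $\pi_*(\End E)$ with $\rho_*\mc{O}_C$ via the spectral correspondence, and then exploit the fact that $\rho:C\to B$ is a smooth degree-$2$ cover in order to read off the structure of $N$ from the standard trace decomposition of a double cover.

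First I would note that because $E$ is non-filtrable with no jumps (regularity on every fibre rules them out), Proposition \ref{smooth-spectral-curve} gives that the spectral curve $C = S_E$ is smooth and $\rho:C\to B$ is a finite double cover with ramification divisor $R$ of degree $8\Delta(E)$. The next step, which is the real crux of the argument, is to prove the identification
\[ \pi_*(\End E) \;\simeq\; \rho_*\mc{O}_C. \]
The quickest route is via the spectral/Fourier--Mukai correspondence mentioned in Section \ref{Spectral} and worked out in \cite{BrMoFM, Boulter}: regularity of $E$ on every fibre ensures that the (twisted) Fourier--Mukai transform of $E$ is a line bundle $\mc{L}$ on $C$, and functoriality gives $\pi_*\End E = \rho_*\End\mc{L}=\rho_*\mc{O}_C$. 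The same conclusion can be reached directly by a fibrewise computation: regularity implies that for each $b\in B$ the endomorphism algebra of $E|_{\pi^{-1}(b)}$ is $2$-dimensional, isomorphic to $\mathbb{C}\oplus\mathbb{C}$ at unramified $b$ and to $\mathbb{C}[\epsilon]/(\epsilon^2)$ at branch points, exactly matching the fibres of $\rho_*\mc{O}_C$; cohomology-and-base-change then upgrades this agreement of fibres to an isomorphism of sheaves on $B$.

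Granted the identification, I would decompose the rank-$2$ $\mc{O}_B$-algebra $\rho_*\mc{O}_C$ under the involution swapping the two sheets of $\rho$ as
\[ \rho_*\mc{O}_C \;=\; \mc{O}_B \oplus N, \]
where $\mc{O}_B$ is the image of the splitting $\tfrac{1}{2}\tr$ and $N$ is the line bundle of trace-free sections. Under the analogous trace decomposition $\End E = \mc{O}_X\oplus \End_0 E$, pushing forward gives $\pi_*\End_0 E\simeq N$. The multiplication in the algebra $\rho_*\mc{O}_C$ restricts to a symmetric pairing $N\otimes N\to \mc{O}_B$, whose vanishing locus is by construction the branch divisor of $\rho$; since this divisor represents (via $\rho_*$) the class $R$, I get
\[ N^{\otimes 2}\;\simeq\;\mc{O}_B(-R), \]
and in particular $2\deg N = -\deg R = -8\Delta(E)$, giving $\deg N = -4\Delta(E)$.

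For the final cohomological statement, the projection formula together with $\pi_*\End_0 E = N$ yields
\[ \pi_*(\End_0 E\otimes \pi^*W) \;\simeq\; N\otimes W, \]
so $H^0(X,\End_0 E\otimes \pi^*W)\simeq H^0(B,N\otimes W)$ by Leray (noting that $R^1\pi_*$ does not contribute to $H^0$). The main obstacle is the first identification $\pi_*\End E\simeq \rho_*\mc{O}_C$: verifying it fibre by fibre is elementary, but turning that into a sheaf isomorphism cleanly requires either invoking the Fourier--Mukai description of \cite{BrMoFM, Boulter} or carrying out a base-change argument using smoothness of $C$ and regularity of $E$. Everything downstream is then formal.
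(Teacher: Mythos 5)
Your proof is essentially correct but takes a genuinely different route from the paper. The paper never identifies $\pi_*(\End E)$ with $\rho_*\mc{O}_C$: it shows directly that $N:=\pi_*(\End_0E)$ is a rank-$1$ torsion-free sheaf (hence a line bundle on the curve $B$), uses the trivialisation $\pi_*(\End_0E\otimes \pi^*N^{-1})\simeq \mc{O}_B$ to produce a distinguished trace-free map $\phi:E\to E\otimes \pi^*N^{-1}$ that is non-zero on every fibre, observes via regularity that $\phi$ drops rank exactly on the fibres over the ramification points, and then takes determinants in $0\to E\to E\otimes \pi^*N^{-1}\to \coker\phi\to 0$ to get $\pi^*N^{-2}\simeq \mc{O}_X(\pi^{-1}(R))$. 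That argument is self-contained given Proposition \ref{smooth-spectral-curve}, whereas your route buys a cleaner conceptual picture ($N$ as the anti-invariant summand of the double-cover algebra, with $N^{\otimes 2}\to\mc{O}_B$ cutting out the branch divisor) at the price of the identification $\pi_*(\End E)\simeq \rho_*\mc{O}_C$, which the paper deliberately avoids.

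One step you should repair: ``cohomology-and-base-change then upgrades this agreement of fibres to an isomorphism of sheaves'' is not a valid inference. Base change tells you $\pi_*(\End E)$ is locally free of rank $2$ with the expected fibres, but two locally free sheaves with abstractly isomorphic fibres need not be isomorphic; you need an actual morphism to compare them. The honest elementary version of your argument is to note that $\pi_*(\End E)$ is a sheaf of \emph{commutative} rank-$2$ $\mc{O}_B$-algebras (commutativity holding fibrewise by regularity) and to identify $\mathrm{Spec}_B(\pi_*(\End E))$ with the spectral curve $S_E$ --- which is a small but genuine argument --- or else to invoke the twisted Fourier--Mukai description of \cite{BrMoFM, Boulter} in earnest. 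With that identification in place, the rest of your argument (the trace splitting $\rho_*\mc{O}_C=\mc{O}_B\oplus N$ matching $\End E=\mc{O}_X\oplus\End_0E$, the pairing $N\otimes N\to\mc{O}_B$ vanishing to first order on the branch divisor because $C$ is smooth and the ramification simple, and the projection-formula step at the end) is sound and delivers the same conclusions as the paper.
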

\begin{proof}
    Suppose $E$ is a rank-2 non-filtrable bundle on $X$ that is regular on every fibre of $\pi$, and let $W$ be a vector bundle on $B$. This means, in particular, that $E$ is stable and so $h^0(X, \End_0(E))=0$. Moreover, on the general fibre $\pi^{-1}(b)$ of $\pi$, we have
\[ \End_0(E)\rvert_{\pi^{-1}(b)} = \mc{O}_{\pi^{-1}(b)} \oplus \lambda \oplus \lambda^{-1}\] 
for some $\lambda \in \Pic^0(\pi^{-1}(b))$ such that $\lambda^2 \neq \mathcal{O}_{\pi^{-1}(b)}$. Consequently, $\pi_*(\End_0(E))$ is a rank 1 torsion-free sheaf $N$ on $B$.
Moreover, $\pi_*(\End_0E\otimes \pi^*N^{-1})\simeq \mc{O}_B$, so if $\phi:E \to E\otimes N^{-1}$ is a map associated to a section $\tilde{\phi}\in H^0(X,\End_0E\otimes \pi^*N^{-1})$, then $\phi$ has positive rank on all fibres. Since $E$ is regular on all fibres $T$, the restriction $E\vert_T$ is either of the form $\lambda_1\oplus \lambda_2$ with $\lambda_1,\lambda_2 \in \Pic^0(T)$ and $\lambda_1\not\simeq \lambda_2$, or $\lambda\otimes \mc{A}$ with $\lambda \in \Pic^0(T)$ and $\mc{A}$ the Atiyah bundle. The fibres of the second type occur exactly at the ramification points of $\rho$. In both cases, the non-zero trace-free endomorphisms of $E\vert_T$ are uniquely determined up to a constant scaling, and $\phi\vert_F$ has rank 1 on the fibres corresponding to the ramification points of $\rho$ and rank 2 on all other fibres. Consider the exact sequence 
\[0 \rightarrow E \stackrel{\phi}{\rightarrow} E\otimes \pi^*N^{-1} \rightarrow \coker{\phi}\rightarrow 0.\]
Note that $\coker{\phi}$ is supported exactly on the fibres where the rank of $\phi$ is one, so the support of $\coker{\phi}$ is $\pi^{-1}(R)$. However, we also have that 
\[\det(E)\otimes \mc{O}_X(\supp(\coker(\phi)))\simeq \det(E\otimes \pi^*N^{-1})\simeq \det(E)\otimes \pi^* N^{-2},\]
implying that $\pi^* N^{-2}\simeq \mc{O}_X(\pi^{-1}(R))$ or, equivalently, that $N^2\simeq \mc{O}_B(-R)$. Since $\deg R = 8 \Delta(E)$, it follows that $\deg N = -4\Delta(E)$.

To compute $H^0(X,\End_0E\otimes \pi^*W)$, we now note that 
\[H^0(X,\End_0E\otimes \pi^*W)=H^0(B,\pi_*(\End_0E\otimes \pi^*W))=H^0(B,N\otimes W).\]
\end{proof}

Let us now examine non-filtrable bundles with jumps. We first prove the following technical lemma:

\begin{lemma}
 Let $E$ be a rank-2 non-filtrable vector bundle on $X$ that has a jump of multiplicity $\mu$ on the fibre $j: T = \pi^{-1}(b) \hookrightarrow X$ with $b \in B$.  Moreover, let $E'$ be the allowable elementary modification of $E$ along $T$. Then:
 \begin{enumerate}
     \item $\pi_*(\End_0 E) \simeq \pi_*(\End_0 E')$ if $E'\vert_T = E\vert_T$;
     \item $\pi_*(\End_0 E) \simeq \pi_*(\End_0 E')(-b)$ if $E'\vert_T \ne E\vert_T$.
 \end{enumerate}
\end{lemma}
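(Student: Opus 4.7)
The plan is to compare $\pi_*(\End_0 E)$ and $\pi_*(\End_0 E')$ by embedding both into a common subsheaf and tracking the quotients after pushforward. Starting from the defining sequence $0 \to E' \to E \to j_*\lambda \to 0$ of the allowable elementary modification, I first introduce the subsheaf $F \subset \End E$ of endomorphisms of $E$ preserving $E'$; restriction to $E'$ gives an injection $F \hookrightarrow \End E'$. Writing $E|_T \simeq \lambda \oplus \nu$, where $\nu := \lambda^{-1} \otimes \det(E)|_T$ has degree $h$ on the elliptic fibre $T$, a local computation in coordinates adapted to $T$ (using triviality of $N_{T/X}$) identifies the cokernels: $\End E/F \simeq j_*(\nu^{-1} \otimes \lambda)$ and $\End E'/F \simeq j_*(\lambda^{-1} \otimes \nu)$. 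Since the trace is preserved under restriction $F \hookrightarrow \End E'$, passing to trace-free parts $F_0 := F \cap \End_0 E = F \cap \End_0 E'$ yields the short exact sequences
\[
    0 \to F_0 \to \End_0 E \to j_*(\nu^{-1} \otimes \lambda) \to 0, \qquad 0 \to F_0 \to \End_0 E' \to j_*(\lambda^{-1} \otimes \nu) \to 0.
\]

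Next I push these sequences forward by $\pi$. Since $\nu^{-1} \otimes \lambda$ has degree $-2h < 0$ on the elliptic fibre $T$, one has $H^0(T, \nu^{-1} \otimes \lambda) = 0$, and the first sequence gives $\pi_*\End_0 E \simeq \pi_* F_0$. The second sequence, using that $\lambda^{-1} \otimes \nu$ has positive degree $2h$, yields
\[
    0 \to \pi_* \End_0 E \to \pi_* \End_0 E' \to Q \to 0,
\]
where $Q$ is a subsheaf of the skyscraper $H^0(T, \lambda^{-1} \otimes \nu) \otimes \mathbb{C}_b \simeq \mathbb{C}^{2h}_b$. Because $\pi_*\End_0 E'$ is a line bundle on $B$, its stalk at $b$ is free of rank $1$ over $\mc{O}_{B, b}$, and the natural $\mc{O}_{B, b}$-linear map into a module annihilated by $\mathfrak{m}_b$ has image of $\mathbb{C}$-dimension at most $1$. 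Hence $\pi_*\End_0 E' \simeq \pi_*\End_0 E(\ell \cdot b)$ for some $\ell \in \{0, 1\}$, and the lemma reduces to proving $\ell = 0$ in case 1 and $\ell = 1$ in case 2.

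To identify $\ell$, I study which first-order data $\phi_0 \in H^0(T, \End_0(E'|_T))$ lift to a formal section of $\End_0 E'$ on a formal neighbourhood of $T$, via the formal function theorem. The restriction $E'|_T$ fits in the sequence $0 \to \lambda \to E'|_T \to \nu \to 0$ obtained from tensoring the modification sequence with $\mc{O}_T$, and its extension class $\eta \in H^1(T, \nu^{-1} \otimes \lambda)$ vanishes precisely when $E'|_T \simeq E|_T$, i.e.\ in case 1. The key technical step is to relate the obstruction to lifting $\phi_0$ (given by cup-products with the Kodaira--Spencer class of the family $\{E'|_{\pi^{-1}(t)}\}_{t \in B}$ near $b$) to the class $\eta$, and then, via Serre duality on $T$ (the perfect pairing $H^0(T, \lambda^{-1} \otimes \nu) \times H^1(T, \nu^{-1} \otimes \lambda) \to \mathbb{C}$), deduce that the dimension of the image of the map $\pi_*\End_0 E' \to H^0(T, \lambda^{-1} \otimes \nu)$ equals $0$ in case 1 and $1$ in case 2.

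The main technical difficulty is this final step: carrying out the formal-neighbourhood analysis of $\End_0 E'$ along $T$, tracking how the extension class $\eta$ enters the obstruction map on first-order sections, and invoking Serre duality on $T$ to extract the precise count in each of the two cases.
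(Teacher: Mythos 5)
Your reduction is correct, and it is a genuinely different (and rather clean) route from the paper's: the subsheaf $F \subset \End E$ of endomorphisms preserving $E'$, the identifications $\End_0 E/F_0 \simeq j_*(\nu^{-1}\otimes\lambda)$ and $\End_0 E'/F_0 \simeq j_*(\lambda^{-1}\otimes\nu)$, the vanishing of $H^0(T,\nu^{-1}\otimes\lambda)$, and the conclusion that $\pi_*\End_0 E' \simeq (\pi_*\End_0 E)(\ell b)$ with $\ell\in\{0,1\}$ all check out (I verified the cokernels in a local frame adapted to $T$, and they are also consistent with $c_2(\End E')=c_2(\End E)-4h$). The problem is that the determination of $\ell$ in each of the two cases is the entire content of the lemma, and you have not proved it: you have only named a strategy (formal neighbourhoods, Kodaira--Spencer obstructions, Serre duality against the extension class $\eta$ of $0\to\lambda\to E'\vert_T\to\nu\to 0$) and flagged it yourself as the main technical difficulty. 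Moreover, the proposed mechanism is suspect as stated. In case 1 one has $\eta=0$ and $H^0(T,\End_0(E'\vert_T))$ surjects onto the $2h$-dimensional space $H^0(T,\lambda^{-1}\otimes\nu)$, yet the lemma asserts that the image of $\pi_*\End_0 E'$ there is \emph{zero}; so the vanishing you need cannot be produced by a first-order obstruction realized as a pairing against $\eta=0$ --- whatever kills those sections is not visible in the data you propose to pair. A naive reading of your plan would in fact predict the dichotomy backwards.

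For comparison, the paper settles the dichotomy by a global determinant argument that avoids any local or formal analysis. Tensoring $0\to E'\to E\to j_*\lambda\to 0$ by $E^\vee$ and pushing forward gives $0\to\pi_*\Hom(E,E')\to\mathcal{O}_B\oplus N\to\mathcal{O}_b\to 0$ (surjective because $\id_E$ maps onto the generator), whence $\det\pi_*\Hom(E,E')\simeq N(-b)$; combined with $\Hom(E,E')\simeq\Hom(E',E)\otimes\mathcal{O}_X(-T)$ this gives $N\simeq\det\bigl(\pi_*\Hom(E',E)\bigr)(-b)$. Tensoring instead by $(E')^\vee$ reduces everything to $\pi_*\Hom(E',j_*\lambda)\simeq H^0(T,\Hom(E'\vert_T,\lambda))\otimes\mathcal{O}_b$, which is $\mathcal{O}_b$ or $0$ according to whether $0\to\lambda\to E'\vert_T\to\nu\to 0$ splits: since $\Hom(\nu,\lambda)=0$ for degree reasons, a nonzero map $E'\vert_T\to\lambda$ must restrict to a nonzero multiple of $\id_\lambda$ and hence is exactly a splitting. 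This is where the case distinction $E'\vert_T\simeq E\vert_T$ versus $E'\vert_T\not\simeq E\vert_T$ actually enters. If you wish to keep your $F$-based setup, you should replace the formal-neighbourhood step with an argument of this kind (e.g.\ compute $\det\pi_*F_0$ from both of your exact sequences, or pair your sequences with the twist relation above); as written, the proof is incomplete at its crucial point.
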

\begin{proof}
Suppose that $E$ is a rank-2 non-filtrable bundle on $X$ that has a jump of multiplicity $\mu$ on the fibre $j: T = \pi^{-1}(b) \hookrightarrow X$ with $b \in B$. There then exists an exact sequence
\begin{equation}\label{allowable2}
0 \rightarrow E' \stackrel{\iota}{\rightarrow} E \stackrel{p}{\rightarrow} j_*L \rightarrow 0
\end{equation}
for some line bundle $L \in \Pic^d(T)$, $d < 0$. In other words, $E'$ is the allowable elementary modification of $E$. Tensoring \eqref{allowable2} by $E^*$, we obtain
\begin{equation}\label{starred}
0 \rightarrow \Hom(E,E') \stackrel{\iota_*}{\rightarrow} \End E \stackrel{p_*}{\rightarrow} \Hom(E,j_*L) \rightarrow 0.  
\end{equation}
Pushing forward equation \eqref{starred} to $B$ then gives
\[ 0 \rightarrow \pi_*(\Hom(E,E')) \rightarrow \pi_*(\End E) \rightarrow \mathcal{O}_b \rightarrow \dots .\]
Note that $\pi_*(\Hom(E,E'))$ is a rank-2 vector bundle on $B$ because $E$ and $E'$ are both non-filtrable. Similarly, 
\[ \pi_*(\End E) = \mathcal{O}_B \oplus \pi_*(\End_0 E) = \mathcal{O}_B \oplus N\]
with $N := \pi_*(\End_0 E)$ a line bundle on $B$.
Moreover, the factor $\mathcal{O}_B$ in $\pi_*(\End E)$ maps surjectively onto $\mathcal{O}_b \simeq \pi_*(\Hom(E,j_*L))$ by sending $\id_E$ to the projection $p: E \rightarrow j_* L$. We therefore have an exact sequence
\[ 0 \rightarrow \pi_*(\Hom(E,E')) \rightarrow \mathcal{O}_B \oplus N \rightarrow \mathcal{O}_b \rightarrow 0 .\]
This means, in particular, that $\pi_*(\Hom(E,E'))$ is an elementary modification of $\pi_*(\End E) = \mathcal{O}_B \oplus N$ along the divisor $D = b$ (or, equivalently, a Hecke modification of $\pi_*(\End E)$ at $b$), implying that
\[ \det(\pi_*(\Hom(E,E'))) = \det(\pi_*(\End E))(-D) = N(-b).\]
We further note that $\det E' = (\det E)(-T)$ so that
\[ \Hom(E,E') = E^* \otimes E' = E^* \otimes  (E')^* \otimes (\det E)(-T) = \Hom(E',E)(-T) \]
and
\[ \pi_*(\Hom(E,E')) = \pi_*(\Hom(E',E))(-b).\]
Hence,
\[ N = \det(\pi_*(\Hom(E,E')))(b) =  \det(\pi_*(\Hom(E',E)))(-b).\]

Let us now prove that $\det(\pi_*(\Hom(E',E)))$ can be expressed in terms of $\pi_*(\End_0 E')$.
Tensoring \eqref{allowable2} by $(E')^*$, we obtain
and
\begin{equation}\label{starred2}
0 \rightarrow \End(E') \stackrel{\iota_*}{\rightarrow} \Hom(E',E) \stackrel{p_*}{\rightarrow} \Hom(E',j_*L) \rightarrow 0.  
\end{equation}
Pushing down to $B$ gives the exact sequence
\begin{equation}\label{starred3} 
0 \rightarrow \pi_*(\End E') \rightarrow \pi_*(\Hom(E',E)) \rightarrow \pi_*(\Hom(E',j_*L)). 
\end{equation}
Note that 
\[ \pi_*(\End E') = \mathcal{O}_B \oplus \pi_*(\End_0 E') = \mathcal{O}_B \oplus N'\]
with $N' := \pi_*(\End_0 E')$ a line bundle on $B$ since $E'$ is non-filtrable.
Furthermore, $\pi_*(\Hom(E',j_*L)) = 0$ if $E'\vert_T \not\simeq E\vert_T$, in which case
\[ \pi_*(\End E') \simeq \pi_*(\Hom(E',E))\]
and 
\[ N = \det(\pi_*(\Hom(E',E)))(-b) =\det(\pi_*(\End E'))(-b) = N'(-b).\]
Whereas, if $E'\vert_T \simeq E\vert_T$, then $\pi_*(\Hom(E',j_*L)) = \mathcal{O}_b$ and equation \eqref{starred3} becomes
\begin{equation}\label{starred4} 
0 \rightarrow \pi_*(\End E') \rightarrow \pi_*(\Hom(E',E)) \rightarrow \mathcal{O}_b \dots . 
\end{equation}
The map of sheaves $\pi_*(\Hom(E',E)) \rightarrow \mathcal{O}_b$ is, however, surjective because 
\begin{align*} \pi_*(\Hom(E',E))_b &= H^0(T, \End(L' \oplus L)) \\
&= H^0(T, \End L') \oplus H^0(T, \Hom(L, L')) \oplus H^0(T, \End L),
\end{align*}
where $E'\vert_T \simeq L' \oplus L$ with $L' \in \Pic^{-d}(T)$,
and elements of $H^0(T, \Hom(L,L))$ map onto $\mathcal{O}_b \simeq \pi_*(\Hom(E',j_*L))$ under $p_*$. Consequently, equation \eqref{starred3} is the short exact sequence, implying that $\pi_*(\End E')$ is a Hecke transformation of $\pi_*(\Hom(E',E))$ at $b$. Thus,
\[ \det(\pi_*(\Hom(E',E))) = \pi_*(\End E')(b) = N'(b)\]
and 
\[ N = \det(\pi_*(\Hom(E',E)))(-b) = N',\]
proving the lemma.
\end{proof}

As a direct consequence of the lemma, we have:

\begin{proposition}
 Let $E$ be a rank-2 non-filtrable bundle on $X$ that has a jump of multiplicity $\mu$ and length $l$ on the fibre $j: T = \pi^{-1}(b) \hookrightarrow X$ with $b \in B$. Suppose that
 $\{ \bar{E}_1,\bar{E}_2, \dots, \bar{E}_l \}$ is the sequence of allowable elementary modifications associated to $E$, and let 
\[ h_1 \geq h_2 \geq \cdots \geq h_{l-1}\]
be the respective heights of $\bar{E}_1,\dots,\bar{E}_{l-1}$. 
Then, $\Delta(E) = \Delta(\bar{E}_l) + \frac{1}{2}\mu$ and
\[ \pi_*(\End_0 E) \simeq \pi_*(\End_0 \bar{E}_l)(-sb),\]
where $s$ is the number of distinct heights. 
\end{proposition}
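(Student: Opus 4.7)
The plan is to derive both identities by iterating the preceding lemma along the sequence $\bar{E}_0 := E, \bar{E}_1, \dots, \bar{E}_l$, interpreting each step as a single allowable elementary modification along $T = \pi^{-1}(b)$, and tracking the cumulative effect on the invariants. Throughout, I would set $h_l := 0$ so that the full sequence of heights is $h_0\geq h_1\geq \cdots \geq h_{l-1}\geq h_l=0$, and refer to the description of elementary modifications recalled in Section~\ref{elementary}.

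For the discriminant identity, I would apply the elementary modification formulas to each step. The $i$-th modification is by a line bundle of degree $-h_{i-1}$ on $T$, so $c_2$ decreases by $h_{i-1}$, while the determinant is twisted by $\pi^*\mc{O}_B(-b)$. Because the restriction of any holomorphic bundle on $X$ to the fibre $T$ has trivial first Chern class, $c_1(\bar{E}_{i-1})\cdot [T]=0$; combined with $[T]^2=0$, this guarantees that $c_1^2$ is preserved along the whole chain. Telescoping yields $c_2(\bar{E}_l) = c_2(E) - \mu$ and $c_1(\bar{E}_l)^2 = c_1(E)^2$, and the rank-$2$ definition of $\Delta$ then produces $\Delta(E) = \Delta(\bar{E}_l) + \mu/2$.

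For the pushforward identity, I would invoke the preceding lemma at each step $\bar{E}_{i-1}\rightsquigarrow \bar{E}_i$: the sheaf $\pi_*\End_0$ is unchanged when $\bar{E}_{i-1}|_T \simeq \bar{E}_i|_T$ and acquires a twist by $\mc{O}_B(-b)$ otherwise. Cumulatively, one obtains $\pi_*(\End_0 E) \simeq \pi_*(\End_0 \bar{E}_l)(-nb)$, where $n$ is the number of steps at which the restriction to $T$ changes. The key combinatorial observation is that this occurs precisely when the height strictly decreases: equal restrictions force equal heights trivially, and the explicit form of the allowable elementary modification shows conversely that $\bar{E}_{i-1}|_T$ is preserved whenever $h_{i-1}=h_i$. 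The number of strict decreases in a non-increasing sequence equals one less than the number of its distinct values, and since $h_0,\dots,h_{l-1}$ take $s$ distinct positive values and $h_l=0$ contributes one additional value, we obtain $n=s$ and hence $\pi_*(\End_0 E) \simeq \pi_*(\End_0 \bar{E}_l)(-sb)$.

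The one delicate point is the equivalence ``$\bar{E}_{i-1}|_T \simeq \bar{E}_i|_T \Longleftrightarrow h_{i-1}=h_i$.'' The non-obvious direction is that equal consecutive heights actually preserve the restriction, rather than merely preserving the summand degrees. I would resolve this by using the explicit description of the allowable modification: the projection $\bar{E}_{i-1}|_T \to L$ with $L\in\Pic^{-h_{i-1}}(T)$ completely determines $\bar{E}_i|_T$ via the defining short exact sequence, and the fact that $\mc{O}_X(-T)|_T\simeq \mc{O}_T$ allows one to compute this restriction explicitly and check the dichotomy. Once this is in hand, the iterated lemma and the elementary counting argument finish the proof.
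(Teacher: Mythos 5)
Your proof is correct and follows essentially the same route as the paper's: both iterate the preceding lemma along the chain of allowable elementary modifications and count the steps at which the restriction to $T$ changes, which is exactly the number $s$ of distinct heights. You supply somewhat more detail than the paper does on the discriminant bookkeeping and on why equal consecutive heights preserve the restriction to $T$ (via $\mathcal{O}_X(-T)\vert_T \simeq \mathcal{O}_T$), but the underlying argument is identical.
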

\begin{proof}
Referring to the lemma, we have $\pi_*(\End_0 E) \simeq \pi_*(\End_0 \bar{E}_1)$ if
$h_0 = h_1$, and $\pi_*(\End_0 E) \simeq \pi_*(\End_0 \bar{E}_1)(-b)$ if  $h_0 > h_1$. Similarly, the lemma gives for any $1\leq i\leq l-1$ that $\pi_*(\End_0 \bar{E}_{i}) \simeq \pi_*(\End_0 \bar{E}_{i+1})$ if
$h_i = h_{i+1}$, and $\pi_*(\End_0 \bar{E}_i) \simeq \pi_*(\End_0 \bar{E}_{i+1})(-b)$ if  $h_i > h_{i+1}$. Applying this result iteratively to each of the $\bar{E}_i$ gives the statement of the proposition.
\end{proof}

\begin{corollary}\label{non-filt-jumps}
Let $E$ be a rank-2 vector bundle on $X$ that has $d$ jumps of multiplicity $\mu_i$ at the fibres $T_i = \pi^{-1}(b_i)$, where $b_i \in B$, $i = 1,\dots,d$. Suppose that the jump at $T_i$ has $s_i$ distinct heights. If $\bar{E}$ is the vector bundle obtained by removing all the jumps of $E$, then
\[ \Delta(E) = \Delta(\bar{E}) + \frac{1}{2}\sum_{i=1}^d\mu_i, \]
\[ \pi_*(\End_0 E) = \pi_*(\End_0 \bar{E})(-s_1b_1-\dots-s_db_d) \]
and
\[ H^0(X,\End_0(E)\otimes K_X) \simeq  H^0(X,\End_0(\bar{E})\otimes K_X(-s_1T_1-\dots-s_dT_d)).\]
\end{corollary}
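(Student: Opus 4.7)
The plan is to apply the previous proposition iteratively, one jump at a time, and then to deduce the cohomology statement from the projection formula.

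First, I would apply the preceding proposition inductively. Concretely, label the jumps $T_1, \ldots, T_d$ and let $E^{(0)} := E$ and $E^{(i)}$ denote the rank-2 bundle obtained from $E^{(i-1)}$ by performing all allowable elementary modifications along $T_i$, so that $E^{(d)} = \bar E$. Since the construction is local near each fibre, applying the preceding proposition at the fibre $T_i$ gives
\[ \Delta(E^{(i-1)}) = \Delta(E^{(i)}) + \tfrac{1}{2}\mu_i, \qquad \pi_*(\End_0 E^{(i-1)}) \simeq \pi_*(\End_0 E^{(i)})(-s_i b_i), \]
and summing (resp.\ telescoping) over $i=1,\ldots,d$ yields the first two identities of the statement. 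The only thing to check is that performing modifications along one fibre does not affect the structure of $\End_0$ over a different fibre, which is clear because the elementary modification $E^{(i)} \to E^{(i-1)}$ is an isomorphism off $T_i$.

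For the third statement, I would use that for a principal elliptic fibration $K_X \simeq \pi^* K_B$. This follows from Proposition \ref{cotangent}: since $\mc{T}_X^* \simeq \pi^*\mc{A}_B$ and $\mc{A}_B$ is an extension of $\mc{O}_B$ by $K_B$, we have $\det \mc{A}_B = K_B$, hence $K_X = \det \mc{T}_X^* = \pi^*K_B$. Likewise, $\mc{O}_X(-T_i) = \pi^*\mc{O}_B(-b_i)$ since $T_i = \pi^{-1}(b_i)$. Then by the projection formula and the identity just established,
\begin{align*}
H^0(X,\End_0(E)\otimes K_X) &\simeq H^0(B, \pi_*(\End_0 E) \otimes K_B) \\
&\simeq H^0\!\bigl(B,\, \pi_*(\End_0 \bar E) \otimes K_B(-\textstyle\sum_i s_i b_i)\bigr) \\
&\simeq H^0\!\bigl(X,\, \End_0(\bar E)\otimes K_X(-\textstyle\sum_i s_i T_i)\bigr),
\end{align*}
where the final isomorphism is the projection formula in the other direction.

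The only mildly subtle point is the inductive step: one must verify that the telescoping really gives a clean formula, i.e.\ that if one modifies along $T_i$ and then along $T_j$ with $i\ne j$, the effect on $\pi_*\End_0$ is the tensor product of the two local twists. This reduces to the fact that both modifications are local on the base, so the stalk of $\pi_*\End_0$ at $b_j$ is unaffected by modifications at $T_i$, and vice versa. Once this is observed, the corollary follows immediately from Proposition \ref{cotangent} and the preceding proposition.
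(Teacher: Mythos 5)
Your proposal is correct and follows the route the paper intends: the corollary is stated without proof precisely because it is the iterated application of the preceding proposition at each jump (the modifications being isomorphisms off the relevant fibre, so they telescope), combined with $K_X=\pi^*K_B$, $\mathcal{O}_X(-T_i)=\pi^*\mathcal{O}_B(-b_i)$ and the projection formula for the cohomological statement. Nothing to add.
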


%%%%%%%%%%%%%%%%%%%%%%%%%%%%%%%%%%%%%%%%%%%%%%%%%
%%%%%%%%%%%%%%%%%%%%%%%%%%%%%%%%%%%%%%%%%%%%%%%%%
%%%%%%%%%%%%%%%%%%%%%%%%%%%%%%%%%%%%%%%%%%%%%%%%%

\section{Co-Higgs bundles in the Hopf surface case}\label{Hopf}
In this section, $X$ is a Hopf surface so that $B = \mathbb{P}^1$ and $\mc{T}_X\cong \mc{O}_X(T)\oplus \mc{O}_X(T)$ with $T$ a fibre of $\pi$. A vector bundle $E$ on $X$ then admits a non-trivial co-Higgs bundle if and only if $H^0(X, \End_0(E)\otimes \mc{O}_X(T))\neq 0$. In fact, we can explicitly parameterise the Higgs fields on $E$ in terms of $H^0(X,\End_0(E)\otimes \mc{O}_X(T))$ as follows:
\begin{lemma}\label{hopf-integrability}
Let $X$ be a Hopf surface and let $E$ be a rank-2 vector bundle on $X$. The set of trace-free Higgs fields $\phi:E \to E\otimes \mc{T}_X$ is then
\[ \left\{\phi=(a\phi_0,b\phi_0) %\in H^0(\End_0(E)\otimes \mc{T}_X)
: \phi_0 \in H^0(X,\End_0(E)\otimes \mc{O}_X(F)), a,b \in \mc{M}_X(X)\right\},\]
where $\mc{M}_X$ is the sheaf of meromorphic functions on $X$.
\end{lemma}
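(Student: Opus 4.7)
The plan is to exploit the explicit decomposition $\mc{T}_X\cong\mc{O}_X(T)\oplus\mc{O}_X(T)$ to reduce the integrability condition to a pointwise linear algebra statement about trace-free $2\times 2$ matrices.

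Using this decomposition, every trace-free $\phi\in H^0(X,\End_0(E)\otimes\mc{T}_X)$ splits uniquely as $\phi=(\phi_1,\phi_2)$ with $\phi_i\in H^0(X,\End_0(E)\otimes\mc{O}_X(T))$ (each component is trace-free because the two copies of $\mc{O}_X(T)$ are linearly independent summands), and under the identification $\wedge^2\mc{T}_X\cong\mc{O}_X(2T)$ the integrability $\phi\wedge\phi=0$ translates directly into the commutator equation $[\phi_1,\phi_2]=0$ in $H^0(X,\End_0(E)\otimes\mc{O}_X(2T))$. I now invoke the elementary fact that two trace-free $2\times 2$ matrices commute if and only if they are linearly dependent, since the centralizer in $\mathfrak{sl}_2$ of any nonzero element is one-dimensional. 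Applied fibrewise to the rank-$3$ bundle $\End_0(E)\otimes\mc{O}_X(T)$, this shows that $\phi_1(x)$ and $\phi_2(x)$ are proportional at every $x\in X$ at which not both vanish.

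To extract the proportionality factor globally, I take $\phi_0=0$ and $a=b=0$ when both $\phi_i$ vanish identically; otherwise, without loss of generality $\phi_1\not\equiv 0$, and I set $\phi_0:=\phi_1$ and $a:=1$. In a local trivialization of $\End_0(E)\otimes\mc{O}_X(T)$ each $\phi_i$ is represented by a triple of holomorphic functions, and the pointwise proportionality defines a meromorphic function as the ratio of any matching pair of components of $\phi_2$ and $\phi_1$, independent of the choice by proportionality. These local ratios agree on overlaps and glue to a global $b\in\mc{M}_X(X)$ with $\phi_2=b\phi_0$. The reverse inclusion is immediate: for any $\phi_0$ and meromorphic $a,b$ for which $(a\phi_0,b\phi_0)$ happens to be a holomorphic section, the scalarity of $a,b$ yields $[a\phi_0,b\phi_0]=ab[\phi_0,\phi_0]=0$. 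The one technical point I expect to verify carefully is that the locally defined ratios assemble into a genuine element of $\mc{M}_X(X)$ on all of $X$, which follows from the standard extension of meromorphic functions across proper analytic subsets of a compact complex surface together with the fact that the zero locus of $\phi_1$ is a proper analytic subset (since $\phi_1\not\equiv 0$).
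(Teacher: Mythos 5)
Your proposal is correct and follows essentially the same route as the paper: the paper's explicit local computation of $\phi\wedge\phi$ in a frame of $\mc{O}_X(T)\oplus\mc{O}_X(T)$ is exactly the commutator $[\phi_1,\phi_2]$, and its vanishing of the $2\times 2$ minors is your fibrewise $\mathfrak{sl}_2$-centralizer argument in coordinates, after which both proofs glue the local ratios into a global meromorphic function. Your phrasing via the one-dimensional centralizer of a nonzero element of $\mathfrak{sl}_2$ is a clean repackaging of the same step, and your care about gluing the ratios across the zero locus of $\phi_1$ matches what the paper leaves implicit.
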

\begin{proof}
Let $U\subseteq X$ be an open set that simultaneously trivialises $\mc{O}_X(T)$ and $E$. Let $e$ be a nowhere vanishing section of $\mc{O}_X(T)\rvert_U$ and let $\{e_1,e_2\}$ be the induced local frame on $\mc{T}_X=\mc{O}_X(T)\oplus \mc{O}_X(T)$. Then $\phi\rvert_U$ has the form $$\phi\rvert_U=\begin{bmatrix}a_1 & b_1 \\ c_1 & -a_1\end{bmatrix}\otimes e_1 + \begin{bmatrix}a_2 & b_2\\ c_2 & -a_2\end{bmatrix}\otimes e_2,$$
and $\phi\wedge\phi\rvert_U$ is $$\left(\phi\wedge\phi\right)\rvert_U=\begin{bmatrix}b_1c_2-b_2c_1 & 2(a_1b_2-a_2b_1)\\ 2(c_1a_2-c_2a_1) & c_1b_2-c_2b_1\end{bmatrix}\otimes e_1\wedge e_2.$$
Since $e_1\wedge e_2$ is nowhere-vanishing on $U$, we must have $a_1b_2-a_2b_1=a_1c_2-a_2c_1=b_1c_2-b_2c_1=0$. The result holds trivially if $\phi\rvert_U=0$, so assume that there is an open set $V\subseteq U$ so that one of the entries is non-zero, say $a_1\neq 0$. Then $b_2=\frac{a_2b_1}{a_1}$ and $c_2=\frac{a_2c_1}{a_1}$, so $\phi\rvert_V=\phi_0\otimes e_1 +\frac{a_2}{a_1} \phi_0 \otimes e_2$ where $\phi_0=\begin{bmatrix}a_1 & b_1\\ c_1 & -a_1\end{bmatrix}.$ We obtain a similar result by assuming any of the $a_i,b_i,c_i$ is non-zero. Since $\phi$ is defined globally, we can glue these local representations to get that there are meromorphic functions $a,b$ so that $\phi=(a\phi_0,b\phi_0)$ for some $\phi_0\in H^0(X,\End_0(E)\otimes \mc{O}_X(T))$.
\end{proof}

\begin{remark}
Note that there are examples of co-Higgs bundles on the Hopf surface $X$ where the meromorphic functions $a,b$ in Lemma \ref{hopf-integrability} are forced to have non-trivial singularities. For instance, consider the Higgs field on $E = \mc{O}_X\oplus \mc{O}_X$ given by $$\left(\begin{bmatrix}0 & x\\ 0 & 0\end{bmatrix}, \begin{bmatrix}0 & y\\ 0 & 0\end{bmatrix}\right),$$
where $x,y$ are homogeneous coordinates for $\mb{P}^1$ and we use the identification of sections of $\mc{O}_X(T)$ with degree-1 homogeneous polynomials in $x,y$. It can be easily verified that this Higgs field is integrable, but for any representation $(a\phi_0, b\phi_0)$, at least one of $a$ and $b$ will have a singularity.
\end{remark}

Before giving a complete description of rank-2 co-Higgs on Hopf surfaces, we begin with two examples that are important in the classification of co-Higgs bundles with trivial Chern classes.

\begin{example}\label{stable-co-Higgs-even}
Let us first consider the case where $E = \mc{O}_X \oplus \mc{O}_X$ so that 
\[ H^0(X,\End_0 E(F)) = H^0(X,\mc{O}_X(F))^{\oplus 3} \simeq H^0(\mathbb{P}^1,\mc{O}_{\mathbb{P}^1}(1))^{\oplus 3}. \] 
Let $x,y$ be homogeneous coordinates on $\mathbb{P}^1$. Then, 
\[  H^0(\mathbb{P}^1,\mc{O}_{\mathbb{P}^1}(1)) \simeq \{ ax+by : a,b \in \mathbb{C} \}\]
is the set of all homogeneous polynomials of degree 1 in $x$ and $y$.
Moreover, any element $\phi_0 \in H^0(X,\End_0 E(T))$ can be written as $\phi_0 = A_1x + A_2y$ for some $A_1,A_2 \in \mathfrak{sl}(2,\mathbb{C})$. Note that $(E,\phi_0)$ is stable if and only if $\mc{O}_X$ is not an invariant subbundle of $E$, which is equivalent to the constant matrices $A_1$ and $A_2$ not having a common eigenvector. Moreover, $A_1$ and $A_2$ do not have a common eigenvector if and only if $\det([A_1,A_2]) \neq 0$ (see \cite{Shemesh}, Theorem 3.1). We are, however, only considering $\phi_0$ up to conjugation by an element of $\Aut(E) \simeq \mathrm{GL}(2,\mathbb{C})$. Since $A_1$ is conjugate to the matrix
\[ \begin{bmatrix} t & 1 \\ 0 & -t \end{bmatrix},\]
where $t$ and $-t$ are the eigenvalues of $A_1$,  we can thus assume from the start that
\[ \mbox{$A_1 = \begin{bmatrix} t & 1 \\ 0 & -t \end{bmatrix}$ and $A_2 = \begin{bmatrix} u & v \\ s & -u \end{bmatrix}$} \]
for some $s,t,u, v \in \mathbb{C}$. Furthermore, $s \neq 0$ since $\det([A_1,A_2]) \neq 0$. Conjugating both $A_1$ and $A_2$ by
\[ \begin{bmatrix} 1 & -u/s \\ 0 & 1/s \end{bmatrix},\]
we obtain
\[ \mbox{$A_1' = \begin{bmatrix} t & s' \\ 0 & -t \end{bmatrix}$ and $A_2' = \begin{bmatrix} 0 & v' \\ 1 & 0 \end{bmatrix}$}, \]
where $s' = s+2tu$ and $v' = u^2+sv$, and $4t^2v' - (s')^2 = \det([A_1,A_2]) \neq 0$. Note that whenever $4t^2v' - (s')^2 \neq 0$, conjugation by
\[ \begin{bmatrix} s & 2tv\\ 2t & s \end{bmatrix}\]
swaps $t$ and $-t$ in $A_1$ and leaves $A_2$ unchanged. Consequently, the conjugacy class of $(A_1,A_2)$ is completely determined by the pair of eigenvalues $\pm t$ as well as the constants $s',v'$. In other words, the stable Higgs fields $\phi_0$ are parametrised by the GIT quotient
\begin{align*}S&= \Spec \left(\mathbb{C}[s',t, v',w]/(w(4t^2v' - (s')^2) - 1)\right)//(t \mapsto -t)\\
&\simeq\Spec \left(\mathbb{C}[s',t',v',w]/(w(4t'v'-(s')^2)-1)\right).\end{align*}
More concretely, such stable Higgs fields are of the form
\[ \phi_0 = A_1'x+A_2'y = \begin{bmatrix} tx & s'x + v'y \\ y & -tx \end{bmatrix} \]
for some $s',t,v' \in \mathbb{C}$ such that $4t^2v' - (s')^2 \neq 0$.
This means, in particular, that $s'$ and $t$ cannot simultaneously be 0, implying that only constant multiples of $\phi_0$ are holomorphic sections of $\End_0(T)$. Stable integrable elements of $H^0(X,\End_0 E \otimes \mc{T}_X)$ are of the form $\phi = (a\phi_0,b\phi_0)$ with $(a,b) \in \mathbb{C}^2\setminus \{(0,0)\}$. Given that $a \neq 0$ or $b \neq 0$, we can absorb this non-zero constant in the eigenvalues of $A_1'$. Consequently, the moduli space of stable trace-free co-Higgs bundles with underlying bundle $E = \mc{O}_X \oplus \mc{O}_X$ is isomorphic to $S \times \mathbb{P}^1$.
\end{example}

\begin{example}\label{stable-co-Higgs-odd}
Let us now consider the case where $E = \mc{O}_X \oplus \mc{O}_X(-T)$ so that elements $\phi_0 \in H^0(X,\End_0 E(T))$ are of the form
\[ \phi_0 = \begin{bmatrix} a & b \\ c & -a \end{bmatrix}\]
with $a \in H^0(X,\mc{O}_X(T)), b \in H^0(X,\mc{O}_X(2T))$ and $c \in \mathbb{C}$, and automorphisms of $E$ are of the form
\[ \begin{bmatrix} d & e \\ 0 & f \end{bmatrix}\]
with $d,f \in \mathbb{C}^*$ and $e \in H^0(X,\mc{O}_X(T))$. Note that $\phi_0$ is stable if and only if $\mathcal{O}_X$ is not $\phi_0$-invariant. Since $\mc{O}_X$ can only map into the first factor of $E$, this happens if and only if $c \neq 0$. We are, however, only considering $\phi_0$ up to conjugation by an automorphism of $E$. Therefore, when $c \neq 0$, we can conjugate by
\[ \psi = \begin{bmatrix} 1 & -a/c \\ 0 & 1/c \end{bmatrix}\]
to obtain
\[ \phi_0' = \psi \phi_0 \psi^{-1} = \begin{bmatrix} 0 & b' \\ 1 & 0 \end{bmatrix},\]
where $b' = cb + a^2$. Consequently, the conjugacy class of $\phi_0$ is completely determined by $b' \in H^0(X,\mc{O}_X(2T))$. In other words, the stable Higgs fields are parametrised by 
\[ H^0(X,\mc{O}_X(2T)) = H^0(\mathbb{P}^1,\mc{O}_{\mathbb{P}^1}(2)) = \{ b_1x^2 + b_2xy + b_3y^2: b_1,b_2,b_3 \in \mathbb{C} \} \simeq  \mathbb{C}^3, \]
the set of all homogeneous polynomials of degree 2 in the homogeneous coordinates $x,y$ on $\mathbb{P}^1$.
More concretely, such stable Higgs fields are of the form
\[ \phi_0 = \begin{bmatrix} 0 & b_1x^2 + b_2xy + b_3y^2\\ 1 & 0 \end{bmatrix} \]
for some $b_1,b_2,b_3 \in \mathbb{C}$,
implying that only constant multiples of $\phi_0$ are holomorphic sections of $\End_0E(T)$. Stable integrable elements of $H^0(X,\End_0 E \otimes \mc{T}_X)$ are thus of the form $\phi = (a\phi_0,b\phi_0)$ with $(a,b) \in \mathbb{C}^2\setminus \{(0,0)\}$ and $\phi_0 \in H^0(X,\End_0 E(T))$. Given that $a \neq 0$ or $b \neq 0$, the non-zero constant $a$ or $b$ can also be absorbed in the constant $c$ of $\phi_0$. Consequently, the moduli space of stable trace-free co-Higgs bundles with underlying bundle $E = \mc{O}_X \oplus \mc{O}_X(-T)$ is isomorphic to $\mathbb{C}^3 \times \mathbb{P}^1$.
\end{example}

\begin{remark}\label{pullback-co-Higgs}
The stable co-Higgs bundles described in examples \ref{stable-co-Higgs-even} and \ref{stable-co-Higgs-odd} correspond to stable co-Higgs bundles on $\mathbb{P}^1$. 
To see this, recall from Proposition \ref{cotangent} that $\mc{T}_X$ fits into the exact sequence
\[ 0 \rightarrow  \mc{O}_X \rightarrow \mc{T}_X \rightarrow \pi^* \mc{T}_{\mathbb{P}^1} \rightarrow 0.\]
Tensoring this exact sequence by $\End_0 E$ for any vector bundle $E$ on $X$ and passing to cohomology, we obtain the long-exact sequence
\[ 0 \rightarrow H^0(X,\End_0 E) \rightarrow H^0(X,\End_0 E \otimes \mc{T}_X) \rightarrow H^0(X,\End_0 E \otimes\pi^* \mc{T}_{\mathbb{P}^1}) \dots. \]
In the case where $E = \mc{O}_X \oplus \mc{O}_X$ or $\mc{O}_X \oplus \mc{O}_X(-T)$, a direct computation gives the short exact sequence:
\[ 0 \rightarrow H^0(X,\End_0 E) \rightarrow H^0(X,\End_0 E \otimes \mc{T}_X) \rightarrow H^0(\mathbb{P}^1,\End_0 V \otimes \mc{T}_{\mathbb{P}^1}) \rightarrow 0, \]
where $V = \mc{O}_{\mathbb{P}^1} \oplus \mc{O}_{\mathbb{P}^1}$ or $\mc{O}_{\mathbb{P}^1} \oplus \mc{O}_{\mathbb{P}^1}(-1)$ depending on whether $E = \mc{O}_X \oplus \mc{O}_X$ or $\mc{O}_X \oplus \mc{O}_X(-T)$, respectively. Note that stable Higgs fields on $X$ cannot come from elements in $H^0(X,\End_0 E)$, otherwise, there would exist a $\phi \in H^0(X,\End_0 E)$ with $(E,\phi)$ stable, which would force $\phi = \lambda \id_E$ for some $\lambda \in \mathbb{C}$ by Proposition \ref{endo}. Since $\phi$ is assumed to be trace-free, this means that $\lambda = 0$ and $\phi = 0$, which is impossible since $E$ is not stable. Every stable Higgs field on $E$ is thus the pullback of a stable Higgs field on $V$, whose moduli spaces were studied in \cite{Rayan,Rayan-P1}. In fact, it was shown that the component corresponding to $V = \mc{O}_{\mathbb{P}^1} \oplus \mc{O}_{\mathbb{P}^1}(-1)$ is a 6-dimensional subvariety of $M \times H^0(\mathbb{P}^1,\mc{O}_{\mathbb{P}^1}(4))$, where $M$ is the total space of $\mc{T}_{\mathbb{P}^1}$. Nonetheless, not every stable Higgs field on $V$ gives rise to a stable Higgs field on $E$ because of the integrability condition $\phi \wedge \phi = 0$ as described in examples \ref{stable-co-Higgs-even} and \ref{stable-co-Higgs-odd}. As such, the stable Higgs fields on $E$ form a proper subset of the stable Higgs fields on $V$.
\end{remark}

Let us now give a complete description of rank-2 co-Higgs bundles $(E,\phi: E \rightarrow E \otimes \mc{T}_X)$ on $X$. We consider the cases where $E$ is not filtrable or filtrable separately. We first assume that $E$ is not filtrable.

\begin{proposition}\label{non-filt-co-Higgs}
Let $E$ be a rank-2 holomorphic vector bundle  on a Hopf surface $X$ that is not filtrable. Then, $\phi = 0$ is the only trace-free Higgs field on $E$.
\end{proposition}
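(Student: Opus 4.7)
The plan is to show that $H^0(X, \End_0(E) \otimes \mc{T}_X) = 0$ by pushing down to $\mathbb{P}^1$. Since $X$ is a Hopf surface, Proposition \ref{cotangent} gives $\mc{T}_X \cong \pi^*\mc{O}_{\mathbb{P}^1}(1)^{\oplus 2}$, so by the projection formula,
\[ H^0(X, \End_0(E) \otimes \mc{T}_X) \cong H^0(\mathbb{P}^1, \pi_*(\End_0 E)(1))^{\oplus 2}, \]
and the main task becomes controlling the degree of $\pi_*(\End_0 E)$ on $\mathbb{P}^1$.

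First, I would reduce to the jump-free case. Let $\bar{E}$ denote the bundle obtained from $E$ by successively removing all its jumps via allowable elementary modifications. Since $\bar{E}$ embeds as a subsheaf of $E$, any rank-$1$ subsheaf of $\bar{E}$ would yield one of $E$, so $\bar{E}$ remains non-filtrable. Corollary \ref{non-filt-jumps} then gives $\pi_*(\End_0 E) \cong \pi_*(\End_0 \bar{E})(-s_1 b_1 - \dots - s_d b_d)$ with each $s_i \geq 1$, reducing the problem to bounding the degree of $\bar{N} := \pi_*(\End_0 \bar{E})$.

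Next, I would apply Propositions \ref{smooth-spectral-curve} and \ref{non-filt-reg} to $\bar{E}$: because $\bar{E}$ has no jumps, its spectral curve $S_{\bar{E}}$ is smooth and $\bar{E}$ is regular on every fibre, so $\bar{N}$ is a line bundle on $\mathbb{P}^1$ with $\deg \bar{N} = -4\Delta(\bar{E})$. Non-filtrability of $\bar{E}$ forces $S_{\bar{E}}$ to be connected, since a disconnected smooth double cover of $\mathbb{P}^1$ would split as two sections of $J(X) \to \mathbb{P}^1$ and produce a rank-$1$ subsheaf of $\bar{E}$. Riemann--Hurwitz applied to the connected smooth double cover $\rho : S_{\bar{E}} \to \mathbb{P}^1$ then forces $\deg R \geq 2$, giving $\Delta(\bar{E}) \geq 1/4$.

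The main delicacy is upgrading this bound to $\Delta(\bar{E}) \geq 1$. The intersection form on $NS(X)$ is trivial on a Hopf surface (every divisor class is a multiple of the fibre $T$, which has self-intersection $0$), so $c_1(\bar{E})^2 = 0$ and hence $\Delta(\bar{E}) = c_2(\bar{E}) \in \mathbb{Z}$. Together with the Riemann--Hurwitz bound, this yields $\Delta(\bar{E}) \geq 1$, so $\deg \bar{N} \leq -4$. The line bundle $\bar{N}(1 - s_1 - \dots - s_d)$ on $\mathbb{P}^1$ then has degree at most $-3$, so its $H^0$ vanishes. Consequently $H^0(X, \End_0(E) \otimes \mc{T}_X) = 0$, and therefore $\phi = 0$.
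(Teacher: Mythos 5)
Your proof is correct in outline and follows the same strategy as the paper's: reduce to computing $H^0(\mathbb{P}^1,\pi_*(\End_0 E)(1))$, strip the jumps using Corollary \ref{non-filt-jumps}, and show the resulting line bundle on $\mathbb{P}^1$ has negative degree via $\deg \pi_*(\End_0\bar E)=-4\Delta(\bar E)$. The one place you genuinely add something is the positivity of $c_2$: the paper simply asserts that every non-filtrable rank-2 bundle on a Hopf surface has $c_2>0$ (and hence $\deg(N(b))=1-2c_2<0$ in the regular case), whereas you derive it from connectedness of the spectral curve plus Riemann--Hurwitz. That is a nice self-contained justification of a step the paper leaves implicit. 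Two small corrections, neither fatal. First, on a Hopf surface $\Delta(2,\delta,c_2)=c_2/2$, not $c_2$ (the intersection form kills $c_1^2$ but the discriminant still carries the factor $\tfrac{1}{2}$), so integrality gives $\Delta(\bar E)\geq \tfrac12$ and $\deg\bar N\leq -2$; your final degree bound of $-3$ should be $-1$, which is still negative, so the conclusion survives. Second, you invoke Proposition \ref{smooth-spectral-curve}, whose hypothesis is that $E$ lies \emph{in the non-filtrable range} $-e_\delta/4\leq \Delta< m(2,\delta)$; on a Hopf surface both endpoints equal $0$, so that range is empty and the proposition is vacuous there. What you actually need --- that a non-filtrable bundle is regular away from its jumps, and that $N^2\simeq\mc{O}_B(-R)$ so $\deg R=8\Delta(\bar E)$ --- is available from Proposition \ref{non-filt-reg} (whose hypotheses are just non-filtrability and fibrewise regularity) together with the general regularity statement for non-filtrable bundles; you should cite those rather than Proposition \ref{smooth-spectral-curve}. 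The paper's own proof silently relies on the same regularity fact when it asserts that the jump-free reduction $E'$ is regular.
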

\begin{proof}
It is enough to show that $h^0(X, \End_0(E(T)))=0$, where $T=\pi^{-1}\{b\}$ for some $b \in B$, where $\pi:X\to \mathbb{P}^1$ is the natural projection.
Let $\delta = \det E$ and set $N:=\pi_*(\End_0(E))$.
Since the Néron-Severi group of a Hopf surface is trivial, the ruled surface $\mathbb{F}_\delta$ is exactly $\mathbb{P}^1\times \mathbb{P}^1$. Because of this, every non-filtrable rank-2 bundle has positive second Chern class $c_2$ and discriminant $\Delta(2,\delta,c_2)=c_2/2$. Note that by Corollary \ref{non-filt-jumps}, if there is a non-filtrable bundle $E$ such that $h^0(X,\End_0(E(T)))\neq 0$, then there is a regular bundle $E'$ obtained by applying a series of allowable elementary modifications to $E$, as well as an effective divisor $D$ on $B$ so that $h^0(B,N(b+D))=h^0(X,\End_0(E'(T)))$. Thus it will suffice to consider the case where $E$ is regular. In this case, $\deg(N(b))=-4\Delta(2,\delta,c_2)+1=1-2c_2$ by Proposition \ref{non-filt-reg}. Since $c_2$ is a positive integer, $N(b)$ will therefore always have negative degree, giving $h^0(X,\End_0(E(T)))=0$.

\end{proof} 

Proposition \ref{non-filt-co-Higgs} therefore tells us that only filtrable bundles potentially admit non-trivial Higgs fields. We now consider the case where $E$ is filtrable, beginning with Higgs fields in $H^0(X,\End_0 E(T))$.

\begin{proposition}\label{hopf-reg}
Let $E$ be a filtrable rank-2 bundle over $X$ with $c_2(E) = c_2$ and maximal destabilising bundles $K_1, K_2$. Set $m:=\deg(K_1\otimes K_2\otimes \det(E)^{-1})$.
\begin{enumerate}
\item[(a)]
If $E$ is regular on the generic fibre of $\pi$, then 
\[ h^0(X,\End_0E(T)) = \max\{0,m+2\}.\]
In particular,
$H^0(X,\End_0E(T) \simeq H^0(X,\Hom(E,K(T)))$ so that
 $K_1$ and $K_2$ are both $\phi$-invariant for all $\phi \in H^0(X,\End_0E(T))$.

\item[(b)]
If $E$ is not regular on the generic fibre, so that $K_1 = K_2$ and $\pi_*(K_1^{-1}\otimes E)=\mc{O}_{\mb{P}^1}\oplus \mc{O}_{\mb{P}^1}(-\ell)$ for some $\ell\geq 0$ with $m \leq \ell$,
we then have two cases:
\begin{enumerate}
    \item[(i)] If $c_2 = 0$, then $m=\ell \geq 0$ and $E = K_1 \otimes (\mathcal{O}_X \oplus \mathcal{O}_X(-m T))$, so that
    \[ h^0(X,\End_0 E(T)) = \max\{ 6, m+4 \}.\]
In fact, $H^0(X,\End_0 E(T)) \simeq H^0(X,\Hom(E,K_1(T)))$ when $m \geq 2$, in which case $K_1$ is $\phi$-invariant for all $\phi \in H^0(X,\End_0E(T))$.

\item[(ii)] If $c_2 > 0$,  then $m < \ell$ and
\[h^0(X,\End_0E(T))= \max\{0,m+2\} + \max\{0,m-\ell+2\}.\]
In particular, $H^0(X, \End_0E(T)) \simeq H^0(X,\Hom(E,K_1(T)))$ so that $K_1$ is $\phi$-invariant for all $\phi \in H^0(X,\End_0E(T))$. 
\end{enumerate}

\end{enumerate}
\end{proposition}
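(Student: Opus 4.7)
The plan is to reduce everything to cohomology on $\mb{P}^1$ by invoking Proposition \ref{fitrable} with $V = \mc{O}_X(T) = \pi^*\mc{O}_{\mb{P}^1}(1)$, so that $W = \mc{O}_{\mb{P}^1}(1)$. By the remark following Proposition \ref{fitrable}, $K_1\otimes K_2\otimes \det(E)^{-1} = \pi^*H$ for some line bundle $H$ on $\mb{P}^1$, and by the definition of $m$ we have $H = \mc{O}_{\mb{P}^1}(m)$; hence $H\otimes W = \mc{O}_{\mb{P}^1}(m+1)$ with $h^0 = \max\{0,m+2\}$. All the claimed formulas will ultimately come from this identification together with Grothendieck's splitting theorem on $\mb{P}^1$.

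Part (a) is immediate from Proposition \ref{fitrable}(a): it gives both $h^0(X,\End_0 E(T)) = h^0(\mb{P}^1, H\otimes W) = \max\{0,m+2\}$ and the stated isomorphism with $H^0(X,\Hom(E,K_i(T)))$. For part (b)(i), Proposition \ref{fitrable}(b)(i) yields $E = K\otimes \pi^*F$ with $F$ a rank-$2$ bundle on $\mb{P}^1$ given by an extension of $\mc{O}_{\mb{P}^1}(-m)$ by $\mc{O}_{\mb{P}^1}$; matching $F$ with $\pi_*(K^{-1}\otimes E) = \mc{O}_{\mb{P}^1}\oplus \mc{O}_{\mb{P}^1}(-\ell)$ forces $\ell = m$, and since $m\geq 0$ we have $\Ext^1(\mc{O}_{\mb{P}^1}(-m),\mc{O}_{\mb{P}^1}) = H^1(\mb{P}^1,\mc{O}_{\mb{P}^1}(m)) = 0$, so the extension splits and $E\cong K\oplus K(-mT)$. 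A direct computation in this split decomposition gives $\End_0 E\otimes \mc{O}_X(T) \cong \pi^*(\mc{O}_{\mb{P}^1}(1)\oplus \mc{O}_{\mb{P}^1}(m+1)\oplus \mc{O}_{\mb{P}^1}(1-m))$, so $h^0 = 2+(m+2)+\max\{0,2-m\} = \max\{6,m+4\}$; comparing with $h^0(\Hom(E,K(T))) = 2+\max\{0,m+2\}$ shows the two agree exactly when $\max\{0,2-m\} = 0$, i.e.\ when $m\geq 2$.

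For part (b)(ii), I apply Proposition \ref{fitrable}(b)(ii). Using the rank-$2$ identity $E^\vee \cong E\otimes \det(E)^{-1}$ together with $K^2\otimes\det(E)^{-1} = \pi^*\mc{O}_{\mb{P}^1}(m)$, I rewrite $\Hom(E,K(T)) \cong (K^{-1}\otimes E)\otimes \pi^*\mc{O}_{\mb{P}^1}(m+1)$, and the projection formula gives
\[
h^0(X,\Hom(E,K(T))) = h^0(\mb{P}^1,\mc{O}_{\mb{P}^1}(m+1)\oplus \mc{O}_{\mb{P}^1}(m+1-\ell)) = \max\{0,m+2\}+\max\{0,m-\ell+2\},
\]
which is the desired formula. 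The main obstacle is upgrading the inclusion $H^0(\Hom(E,K(T)))\hookrightarrow H^0(\End_0 E(T))$ from Proposition \ref{fitrable}(b)(ii) to an equality: the accompanying remark provides this only when $h^0(X,\pi^*H^{-1}\otimes V) = h^0(\mb{P}^1,\mc{O}_{\mb{P}^1}(1-m)) = 0$, i.e.\ when $m\geq 2$. For the small-$m$ range I plan to analyze the connecting map $H^0(\End E\otimes V)\to H^0(K^{-1}\otimes E\otimes V\otimes \mc{I}_Z)$ directly, exploiting that $Z\neq \emptyset$ (since $c_2 > 0$) and the strict inequality $m < \ell$ to show that the image of the trace-free summand coincides with that of the multiples of $\id_E$, which is already $h^0(V) = 2$-dimensional; this will force every trace-free Higgs field to preserve $K$ and complete the proof.
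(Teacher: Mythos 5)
Your parts (a) and (b)(i) are fine and essentially follow the paper's route: both are read off from Proposition \ref{fitrable} with $V=\pi^*\mc{O}_{\mb{P}^1}(1)$, and your explicit splitting argument in (b)(i) (the extension class lives in $H^1(\mb{P}^1,\mc{O}_{\mb{P}^1}(m))=0$ for $m\geq 0$, so $E\cong K\oplus K(-mT)$, followed by a direct count) is a legitimate and slightly more self-contained way to get $\max\{6,m+4\}$ and the $m\geq 2$ threshold. The computation of $h^0(X,\Hom(E,K(T)))=\max\{0,m+2\}+\max\{0,m-\ell+2\}$ in (b)(ii) also matches the paper.

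The gap is in the second half of (b)(ii). The condition you import from the remark is really about $\ell$, not $m$: what is needed is $h^0(\mb{P}^1,(\mc{O}_{\mb{P}^1}\oplus\mc{O}_{\mb{P}^1}(-\ell))(1))=h^0(\mb{P}^1,\mc{O}_{\mb{P}^1}(1))=2$, i.e.\ $\ell\geq 2$; in that range the target $H^0(X,K^{-1}\otimes E(T)\otimes\mc{I}_Z)$ is at most $2$-dimensional and is already filled by the images of $\id_E\otimes s$, so equality follows at once. The genuinely hard cases are $\ell=1$ and $\ell=0$, and your proposed strategy for them does not go through as stated: when $\ell=1$ one has $h^0(X,K^{-1}\otimes E(T))=3$ and when $\ell=0$ one has $h^0(X,K^{-1}\otimes E(T))=4$, so the connecting map could a priori have image strictly larger than the $2$-dimensional trace part, and neither $Z\neq\emptyset$ nor $m<\ell$ by itself rules out that the extra section(s) vanish on $Z$ and are hit by a trace-free endomorphism. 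The paper has to work for these cases: for $\ell=1$ it passes to the allowable elementary modification $E'$ along a jump fibre and sandwiches $h^0(X,\End E(T))$ between $h^0(X,\Hom(E,K_1(T)))$ and $h^0(X,\Hom(E,K_1(T)))+2$ via two auxiliary exact sequences; for $\ell=0$ it uses that $H^0(X,K_1^{-1}\otimes E(T))\cong H^0(X,K_1^{-1}\otimes E)\otimes H^0(X,\mc{O}_X(T))$ to see that nothing new arises from trace-free sections. Your proposal names the right target but supplies no argument where the paper actually needs one, so as written the proof of (b)(ii) is incomplete for $\ell\in\{0,1\}$.
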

\begin{proof}
The cases where $E$ is regular on the generic fibre and where $c_2=0$ follow from Proposition \ref{fitrable} with $V=\mc{O}_X(T)\simeq \pi^*(\mc{O}_{\mathbb{P}^1}(1))$ and $H=\mc{O}_{\mathbb{P}^1}(m)$.

In the case that $E$ is not regular on the generic fibre and has positive second Chern class $c_2$, the bundle fits into the exact sequence 
\begin{equation}\label{exact-not-reg}
 0 \rightarrow K_1 \stackrel{i}{\rightarrow} E \stackrel{p}{\rightarrow} K_1(-mT)\otimes \mc{I}_Z \rightarrow 0  
\end{equation}
with $Z$ a $0$-dimensional subspace of length $c_2$, and $\pi_*(K_1^{-1}\otimes E)\simeq \mc{O}_{\mb{P}^1}\oplus \mc{O}_{\mb{P}^1}(-\ell).$ Note that in this case we necessarily have $\ell>m$. In this case, $\ell > m$. We again study $H^0(X,\End_0(E)(T))$ via the left exact sequence
\[0 \rightarrow H^0(X,\Hom(E,K_1(T))) \rightarrow H^0(X,\End(E)(T)) \rightarrow H^0(X, K_1^{-1}\otimes E(T) \otimes \mc{I}_Z) \dots.\]
Under these assumptions, we have
 \begin{align*} 
 h^0(X,\Hom(E,K_1(T))) &= h^0(X,K_1^{-1} \otimes E((m+1)T))\\
&= h^0(\mathbb{P}^1,\mathcal{O}_{\mathbb{P}^1}(m+1) \oplus 
 \mathcal{O}_{\mathbb{P}^1}(m+1-\ell))\\
 &= \max\{0,m+2\} + \max\{0,m-\ell+2\}
 \end{align*}
and 
\[ h^0(X,K_1^{-1} \otimes E(T)) = h^0(\mathbb{P}^1,\mathcal{O}_{\mathbb{P}^1}(1) \oplus 
 \mathcal{O}_{\mathbb{P}^1}(1-\ell)) = 2 + h^0(\mathbb{P}^1, \mathcal{O}_{\mathbb{P}^1}(1-\ell)).\]
 We consider three cases: $\ell \geq 2$, $
\ell =1$ and $\ell = 0$. 

 If $\ell \geq 2$, then $h^0(\mathbb{P}^1, \mathcal{O}_{\mathbb{P}^1}(1-\ell)) = 0$ and $h^0(X,K^{-1} \otimes E(T)) = 2$.
 This then means that 
 \[H^0(X,K^{-1} \otimes E(T) \otimes I_Z) = (p \otimes \id_{\mathcal{O}(T)})_*(H^0(X,\mathcal{O}_X(T))),\]
 and the elements of this space correspond to Higgs fields of the form 
 \begin{equation}\label{trace-Higgs} 
 \phi = \id_E \otimes s
 \end{equation}
 with $s \in H^0(X,\mathcal{O}_X(T))$.
 The exact sequence \eqref{exact-not-reg} thus yields \[H^0(X,\End_0 E (T)) \simeq H^0(X,\Hom(E,K_1(T))).\] 

If $\ell=1$ and $c_2>0$, then $E$ has at least one jump, say over the fibre $T$. Denote by $E'$ the allowable elementary modification of $E$ along $T$. Then,  there is an exact sequence 
\begin{equation}\label{allowable-elementary-mod}
0 \rightarrow E' \rightarrow E \rightarrow j_*\lambda \rightarrow 0    
\end{equation}
with $j:T\to X$ the inclusion map and $\lambda$ a line bundle of negative degree on $T$. In particular, $\det E' = \delta(-T)$. Tensoring \eqref{allowable-elementary-mod} by $E^\vee(T)$, we obtain
\[ 0 \rightarrow E' \otimes E^\vee (T) \rightarrow E \otimes E^\vee (T) \rightarrow j_*\lambda \otimes E^\vee (T) \rightarrow 0. \]
Given that $E' \otimes E^\vee (T) = E' \otimes (E \otimes \delta^{-1})(T) = E' \otimes E \otimes (\det E')^{-1} \simeq \Hom(E',E)$, this reduces to
\[ 0 \rightarrow \Hom(E',E) \rightarrow \End E(T) \rightarrow j_*\lambda \otimes E^\vee (T) \rightarrow 0. \]
Taking the long exact sequence on cohomology yields
\[ 0 \rightarrow H^0(X,\Hom(E',E)) \rightarrow H^0(X,\End E(T)) \rightarrow H^0(X,j_*\lambda \otimes E^\vee (T)) \rightarrow \dots. \]
Note that \begin{align*}H^0(X,j_*\lambda \otimes E^\vee (T)) \simeq H^0(T,\lambda \otimes (E^\vee(T))\vert_T) 
= H^0(F,(\lambda^2\otimes (\delta\vert_T)^{-1}) \oplus \mathcal{O}_T),\end{align*} 
implying that $h^0(X,j_*\lambda \otimes E^\vee (T)) = 1$ because $\deg \lambda < 0$ and $\deg(\delta\vert_T) = 0$.
Hence, 
\[ h^0(X,\End E(T)) \leq h^0(X,\Hom(E',E)) +1.\]
In addition, $K_1$ is the unique maximal destabilising bundle of $E'$. Consequently, $E'$ fits into an exact sequence

\[0 \rightarrow K_1 \rightarrow E' \rightarrow K_1(-(m+1)T)\otimes I_{Z'} \rightarrow 0\]
with $Z'$ a subset of $Z$ (counting multiplicity), since $K_1^{-1}\otimes \det E'\simeq K_1(-(m+1)T)$. Applying the $\Hom(-,E)$ functor to this exact sequence, we obtain 
%\[0 \rightarrow K^{-1}\otimes E \rightarrow \Hom(E',E) \rightarrow K^{-1}\otimes E((m+1)F)\otimes I_{Z'} \rightarrow 0.\]
\[0 \rightarrow \Hom(E,K_1(T)) \rightarrow \Hom(E',E) \rightarrow K_1^{-1}\otimes E\otimes I_{Z'} \rightarrow 0\]
because $K_1^{-1}\otimes E((m+1)T) \simeq \Hom(E,K_1(T))$.
%Pushing forward to $\mathbb{P}^1$ gives the left-exact sequence
%\[0 \rightarrow \mc{O}_{\mathbb{P}^1}\oplus \mc{O}_{\mathbb{P}^1}(-1) \rightarrow \pi_*(\Hom(E',E)) \rightarrow (\mc{O}_{\mathbb{P}^1}(m+1)\oplus\mc{O}_{\mathbb{P}^1}(m))\otimes \pi_*(I_{Z'}).\]
Taking the long exact sequence on cohomology gives the left-exact sequence
\[0 \rightarrow H^0(X,\Hom(E,K_1(T))) \rightarrow H^0(X,\Hom(E',E)) \rightarrow H^0(X,K_1^{-1} \otimes E \otimes I_{Z'}).\]
Note that 
$h^0(X,K_1^{-1} \otimes E \otimes I_{Z'}) \leq h^0(X,K_1^{-1} \otimes E) =h^0(\mathbb{P}^1, \mc{O}_{\mathbb{P}^1}\oplus\mc{O}_{\mathbb{P}^1}(-1)) = 1$ so that
\[ h^0(X,\Hom(E',E))\leq h^0(X, \Hom(E,K_1(T))+1.\]
Given that $h^0(X,\End E(T)) \leq h^0(X,\Hom(E',E))+1$ and $H^0(X,\Hom(E,K(T))$ maps injectively into $H^0(X,\End E(T))$ via $(i \otimes \id_{\mathcal{O}(T)})_*$, we then have \[ h^0(X, \Hom(E, K_1(T))) \leq h^0(X,\End E(T)) \leq h^0(X, \Hom(E, K_1(T)))+2.\] 
However, $H^0(X,\End E(T))$ contains sections of the form \eqref{trace-Higgs}, none of which are elements of $(i \otimes \id_{\mathcal{O}(T)})_*\left(H^0(X,\Hom(E,K_1(T))\right)$, implying that $h^0(X,\End E(T))=h^0(X, \Hom(E, K_1(T)))+2$ and $H^0(X,\End_0 E(T)) \simeq H^0(X, \Hom(E, K_1(T)))$.

%%%%%%%%%%%%%%%%%%%%%%%%%%%%%%%%%%%
%%%%%%%%%%%%%%%%%%%%%%%%%%%%%%%%%%%

Finally, if $l = 0$, then $m < 0$, implying that $E$ is stable and $h^0(X,\End E) = 1$. In addition, $\pi_*(K_1^{-1} \otimes E) = \mathcal{O}_{\mathbb{P}^1} \oplus 
 \mathcal{O}_{\mathbb{P}^1}$, which means that the injections $\iota: K_1 \rightarrow E$ are parametrised by 
 \[ H^0(X,K_1^{-1} \otimes E) = H^0(\mathbb{P}^1, \pi_*(K_1^{-1} \otimes E)) = H^0(\mathbb{P}^1,\mathcal{O}_{\mathbb{P}^1} \oplus 
 \mathcal{O}_{\mathbb{P}^1}) = \mathbb{C}^2.\]
 Moreover,
\begin{align*} 
H^0(X,K_1^{-1} \otimes E(T)) &= H^0(\mathbb{P}^1,\mathcal{O}_{\mathbb{P}^1}(1) \oplus 
 \mathcal{O}_{\mathbb{P}^1}(1)) = 
 H^0(X,K_1^{-1} \otimes E) \otimes H^0(X,\mathcal{O}_X(T))\\
 &= \{ \iota \otimes s: \mbox{$\iota:K_1 \rightarrow E$ is an injection}, s \in H^0(X,\mathcal{O}_X(T) \} \\ 
 &= \{ \iota^*(\id_E \otimes s): \mbox{$\iota:K_1 \rightarrow E$ is an injection}, s \in H^0(X,\mathcal{O}_X(T) \}.
 \end{align*}
 Consequently, none of the non-zero elements in $H^0(X,K_1^{-1} \otimes E(T))$ correspond to traceless Higgs fields.
Hence, $H^0(X,\End_0 E (T)) \simeq H^0(X,\Hom(E,K_1(T)))$.
\end{proof}

\begin{corollary}\label{hopf-filt-not-stable}
Let $E$ be a filtrable rank-2 bundle over $X$ with $c_2(E) = c_2$ and $\phi \in H^0(X,\End_0E(T))$. We have the following:
\begin{enumerate}
    \item[(a)] If $c_2 > 0$, then $(E,\phi)$ is stable if and only if $E$ is stable.
    \item[(b)] If $c_2= 0$ and $(E,\phi)$ is stable, then $E = K \oplus K$ or $K \oplus K(-T)$ for some $K \in \Pic X$.
\end{enumerate}
\end{corollary}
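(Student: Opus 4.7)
The plan is to deduce both parts of the corollary directly from the case analysis in Proposition \ref{hopf-reg}, using that its $\phi$-invariance statements for maximal destabilising bundles translate cleanly into equivalences or obstructions for stability of the pair $(E,\phi)$.

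For part (a), the condition $c_2>0$ places $E$ in either case (a) (regular on the generic fibre) or case (b)(ii) (not regular, $c_2>0$) of Proposition \ref{hopf-reg}. In both situations the proposition concludes that every $\phi \in H^0(X,\End_0 E(T))$ preserves the maximal destabilising bundles of $E$. Since those are precisely the rank-one subsheaves of $E$ of extremal slope, the pair $(E,\phi)$ is stable if and only if each maximal destabilising bundle $K$ satisfies $\mu(K)<\mu(E)$, which is exactly the condition for $E$ to be stable as a vector bundle.

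For part (b), a filtrable $E$ with $c_2=0$ is automatically an extension of line bundles, so it falls under case (a) or case (b)(i) of Proposition \ref{hopf-reg}. I would first rule out case (a): the two distinct maximal destabilising bundles $K_1,K_2$ determine spectral sections $b \mapsto K_i\vert_{\pi^{-1}(b)} \in T^*$, which are holomorphic maps $\mathbb{P}^1 \to T^*$. Because $\mathbb{P}^1$ is simply connected while $T^*$ is an elliptic curve with universal cover $\mathbb{C}$, each such map lifts to a holomorphic $\mathbb{P}^1 \to \mathbb{C}$ and must therefore be constant. Since $K_1 \neq K_2$ in case (a) forces the two constant values in $T^*$ to differ, we get $K_1\vert_{T'} \neq K_2\vert_{T'}$ on every fibre $T'$, hence $A(E)=\emptyset$. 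The Br\^inz\u{a}nescu--Moraru relation then gives $K_1 \otimes K_2 \simeq \det(E)$, so $\mu(K_1)=\mu(K_2)=\mu(E)$; with both $K_i$ being $\phi$-invariant by Proposition \ref{hopf-reg}(a), the pair $(E,\phi)$ is at best strictly semi-stable, never stable. In case (b)(i), Proposition \ref{hopf-reg}(b)(i) yields $E \simeq K \otimes (\mathcal{O}_X \oplus \mathcal{O}_X(-mT))$ for some $m \geq 0$; when $m \geq 2$ the summand $K$ is $\phi$-invariant and the computation $\mu(K) - \mu(E) = \frac{m}{2}\deg T > 0$ forces $(E,\phi)$ to fail stability. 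Hence only $m=0$ and $m=1$ survive, yielding exactly $E \simeq K \oplus K$ or $E \simeq K \oplus K(-T)$.

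The main subtlety is the elimination of case (a) in part (b): the dimension count $h^0(X,\End_0 E(T)) = \max\{0,m+2\}$ from Proposition \ref{hopf-reg}(a) could at first suggest that case (a) accommodates stable pairs when $m \geq -1$, but the topological rigidity of holomorphic maps $\mathbb{P}^1 \to T^*$ collapses the spectral data, forces $A(E)=\emptyset$, and makes $E$ only strictly semi-stable. With this geometric observation in hand, the rest of the proof is a direct bookkeeping with Proposition \ref{hopf-reg}.
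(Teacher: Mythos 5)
Your overall strategy is the same as the paper's: everything is driven by the $\phi$-invariance conclusions of Proposition \ref{hopf-reg}, and your part (a) is correct and essentially identical to the paper's argument (the paper phrases it contrapositively: if $E$ is not stable, a maximal destabilising bundle of slope $\geq \mu(E)$ must fail to be $\phi$-invariant, which by Proposition \ref{hopf-reg} forces $E = K\oplus K$ or $K\oplus K(-T)$ and hence $c_2=0$). Your part (b), however, has two concrete problems in the elimination of case (a). First, the deduction ``$K_1\otimes K_2\simeq \det(E)$, so $\mu(K_1)=\mu(K_2)=\mu(E)$'' is a non sequitur: the isomorphism only gives $\mu(K_1)+\mu(K_2)=2\mu(E)$, i.e.\ the two slopes \emph{average} to $\mu(E)$; one of them may well be strictly larger. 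Your conclusion survives anyway, since both $K_i$ are $\phi$-invariant and at least one necessarily has slope $\geq\mu(E)$, but as written the step is false. Second, and more substantively, you assume that case (a) comes with two \emph{distinct} maximal destabilising bundles. A bundle that is regular on the generic fibre can have a unique maximal destabilising bundle $K$, namely exactly when $\det(E)\otimes K^{-2}\in\pi^*\Pic(\mathbb{P}^1)$ (this is the Br\^inz\u{a}nescu--Moraru uniqueness criterion quoted in the paper); e.g.\ $E=K\otimes\mathcal{A}$ with $\mathcal{A}$ the non-split self-extension of $\mathcal{O}_X$ restricting to the Atiyah bundle on each fibre is regular on every fibre, has $c_2=0$, and has $K$ as its only destabilising bundle. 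Your ``two constant spectral sections'' argument says nothing about this sub-case, so case (a) is not fully ruled out. It can be patched: regularity on the generic fibre forces the extension class of $\mathcal{O}_X\to K^{-1}\otimes E\to\pi^*H$ to have non-zero component in $H^0(\mathbb{P}^1,H^{-1}\otimes R^1\pi_*\mathcal{O}_X)$, whence $\deg H\leq 0$ and $\mu(K)\geq\mu(E)$ with $K$ $\phi$-invariant.

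Both defects are repaired at once by the paper's more economical organization, which you may want to adopt: a stable filtrable bundle on a Hopf surface has $c_2>0$, so when $c_2=0$ the bundle $E$ is not stable and \emph{some} maximal destabilising bundle $K$ (unique or not) satisfies $\mu(K)\geq\mu(E)$; stability of the pair then forces $K$ not to be $\phi$-invariant, and Proposition \ref{hopf-reg} leaves only case (b)(i) with $m\in\{0,1\}$. This bypasses any discussion of spectral sections, $A(E)$, or the number of destabilising bundles. Your treatment of case (b)(i) with $m\geq 2$ is correct and agrees with the paper.
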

\begin{proof}
If $E$ is stable, then $c_2>0$ and $(E,\phi)$ is automatically stable. Let us therefore assume that $E$ is not stable with maximal destabilising bundles $K_1,K_2$. Then, $\mu(K_1) \geq \mu(E)$ or $\mu(K_2) \geq \mu(E)$. Without loss of generality, we can assume that $\mu(K) \geq \mu(E)$.
Suppose that $E$ has a stable Higgs field $\phi$. Then $K$ is not $\phi$-invariant, and this can only happen if $E = K \oplus K$ or $K \oplus K(-T)$ for some $K \in \Pic X$ by Proposition \ref{hopf-reg}.
\end{proof}

Putting all this together, we obtain the following:

\begin{theorem}\label{stable-co-Higgs}
Let $X$ be a Hopf surface and $c_2$ be a non-negative integer. Moreover, let $(E,\phi)$ be a trace-free rank-2 co-Higgs bundle on $X$ with $c_2(E) = c_2$. We have:
\begin{enumerate}
    \item If $E$ is non-filtrable, in which case $c_2 > 0$, then $\phi = 0$.
    \item If $E$ is filtrable, then one can have $\phi \neq 0$ for all $c_2 \geq 0$. 
    \item If $c_2 > 0$, then $(E,\phi)$ is stable if and only if $E$ is stable.
    \item If $c_2 = 0$, then $E$ admits a non-trivial stable Higgs field if and only if $E = L \oplus L$ or $L \oplus L(-T)$ for some $L \in \Pic(X)$.
\end{enumerate}
Consequently, there exists a non-trivial stable rank-2 co-Higgs bundle $(E,\phi)$ on $X$ with $c_2(E) = c_2$ for all $c_2 \geq 0$. 
\end{theorem}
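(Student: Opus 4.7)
The plan is to observe that this theorem is essentially a synthesis of results already established in this section, namely Propositions \ref{non-filt-co-Higgs} and \ref{hopf-reg}, Corollary \ref{hopf-filt-not-stable}, and Examples \ref{stable-co-Higgs-even} and \ref{stable-co-Higgs-odd}. The proof therefore amounts to assembling these pieces and, for the final existence statement, producing explicit bundles with the required invariants.

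First I would dispatch (1) by direct appeal to Proposition \ref{non-filt-co-Higgs}; the side remark that $c_2>0$ is already extracted in the proof of that proposition, where the vanishing of $NS(X)$ for a Hopf surface forces $\mathbb{F}_\delta \cong \mathbb{P}^1 \times \mathbb{P}^1$ and hence $\Delta(E)=c_2/2>0$ whenever $E$ is non-filtrable. Part (3) is then immediate: if $E$ is non-filtrable, then by (1) we have $\phi=0$ and $(E,0)$ is stable precisely when $E$ is, which is automatic from non-filtrability; if $E$ is filtrable, this is Corollary \ref{hopf-filt-not-stable}(a). Part (4) uses Corollary \ref{hopf-filt-not-stable}(b) for the forward direction; for the converse, I would note that tensoring by $L^{-1}$ preserves $\phi$-invariance of subsheaves and shifts all slopes uniformly, so it reduces the problem to $L=\mc{O}_X$, and then invoke the explicit stable co-Higgs bundles constructed in Examples \ref{stable-co-Higgs-even} and \ref{stable-co-Higgs-odd} for $\mc{O}_X \oplus \mc{O}_X$ and $\mc{O}_X \oplus \mc{O}_X(-T)$.

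For (2), I would exhibit explicit filtrable examples: when $c_2=0$, take $E=\mc{O}_X \oplus \mc{O}_X$ and use Example \ref{stable-co-Higgs-even}; when $c_2>0$, form $E$ via a non-trivial extension $0 \to \mc{O}_X \to E \to \mc{I}_Z \to 0$ with $|Z|=c_2$, chosen generically so that $E$ is locally free and regular on the generic fibre of $\pi$ (so that $K_1=K_2=\mc{O}_X$ and $m=0$). Then Proposition \ref{hopf-reg}(a) yields $h^0(X,\End_0 E(T))=2>0$, and one applies Lemma \ref{hopf-integrability} to promote any such trace-free element into a genuine trace-free Higgs field $\phi:E \to E\otimes \mc{T}_X$.

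Finally, for the consequence, the $c_2=0$ case is covered by (4); for $c_2>0$, parts (1) and (3) force $E$ to be filtrable and stable, so one must construct a stable filtrable $E$ with $c_2(E)=c_2$ and $H^0(X,\End_0E \otimes \mc{O}_X(T))\neq 0$. Using the surjectivity of $\deg_g \colon \Pic^\tau(X)\to \mathbb{R}$ recalled in Section \ref{Line bundles}, I would select $\delta \in \Pic(X)$ and line bundles $K_1,K_2$ so that $\deg_g K_i < \tfrac{1}{2}\deg_g \delta$ while $m:=\deg(K_1\otimes K_2 \otimes \delta^{-1}) \geq -1$ (for example $m=-1$ with $\deg_g K_i = (\deg_g \delta - 1)/2$), and then construct $E$ as an extension $0\to K_1 \to E \to \delta \otimes K_1^{-1}\otimes \mc{I}_Z \to 0$ with $|Z|=c_2$, arranged to be regular on the generic fibre with both $K_1$ and $K_2$ as maximal destabilising bundles. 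Proposition \ref{hopf-reg}(a) then guarantees a non-trivial trace-free Higgs field, and by (3) the resulting pair is stable. The main bookkeeping obstacle is verifying that this last construction simultaneously achieves stability of $E$ and the correct identification of its two maximal destabilising bundles, which requires a careful choice of the extension class along the lines of the spectral-curve analysis from Section \ref{Spectral}.
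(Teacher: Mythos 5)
Your overall architecture matches the paper's proof exactly: part (1) from Proposition \ref{non-filt-co-Higgs}, parts (3) and (4) from Corollary \ref{hopf-filt-not-stable} together with the reduction to Examples \ref{stable-co-Higgs-even} and \ref{stable-co-Higgs-odd}, and an explicit stable filtrable construction for $c_2>0$. The one place you genuinely diverge is that construction, and it is also the one place you leave something unfinished. You build $E$ as a Serre-type extension $0\to K_1\to E\to \delta\otimes K_1^{-1}\otimes \mc{I}_Z\to 0$ with $|Z|=c_2$ and then concede that identifying the two maximal destabilising bundles (and hence stability, and the applicability of Proposition \ref{hopf-reg}(a) with $m=-1$) ``requires a careful choice of the extension class.'' That verification is exactly the content you need and it is not routine for an arbitrary extension with non-empty $Z$: you must rule out larger destabilising subsheaves and confirm that the second destabilising bundle is what you claim. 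The paper avoids this entirely by instead taking $E'=L_1\oplus L_2$ with $L_1\neq L_2$, $-1<\deg L_1-\deg L_2<1$ (so $L_2\otimes L_1^{-1}\neq\mc{O}_X(\ell T)$ and $E'$ is regular on the generic fibre), and performing a single elementary modification along one fibre by a degree-$c_2$ line bundle; the maximal destabilising bundles of the result are then immediately $L_1(-T)$ and $L_2(-T)$, both of slope strictly less than $\mu(E)$, and $m=-1$ falls out by direct computation. A similar caveat applies to your proof of part (2) for $c_2>0$ via $0\to\mc{O}_X\to E\to\mc{I}_Z\to 0$: whether such $E$ is regular on the generic fibre depends on the extension class, though either branch of Proposition \ref{hopf-reg} happens to give a non-zero space of Higgs fields there, so that step survives. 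I would either adopt the paper's elementary-modification construction or supply the destabilising-bundle analysis you deferred; as written, the final existence claim for $c_2>0$ is not fully proved.
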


\begin{proof}
Part 1 follows immediately from Proposition \ref{non-filt-co-Higgs}, so we can restrict to the case where $E$ is filtrable. We thus assume that $E$ is filtrable. Then, there exist non-trivial Higgs fields $\phi$ for all $E$ with $c_2 \geq 0$ by Proposition \ref{hopf-reg}. In the case that $c_2>0$, Corollary \ref{hopf-filt-not-stable} imply that $(E,\phi)$ is stable if and only if $E$ is stable. We are therefore left with considering the case where $c_2 = 0$. Referring to Corollary \ref{hopf-filt-not-stable}, if $c_2=0$ and $E$ admits a stable Higgs field then, $E = L \oplus L$ or $L \oplus L(-T)$ for some $L \in \Pic X$.

To construct a stable co-Higgs bundle with second Chern class $c_2>0$, let $L_1$ and $L_2$ be distinct line bundles in $\Pic(X)$ such that $-1<\deg(L_1)-\deg(L_2)<1$. This means, in particular, that $L_2 \otimes L_1^{-1} \neq \mathcal{O}_X(\ell T)$ for all $\ell \in \mathbb{Z}$. Set $E'=L_1\oplus L_2$ and let $E$ be an elementary modification of $E'$ along a fibre $T$ of $\pi$ by a line bundle $\lambda$ on $F$ of degree $c_2$. Then, $\det(E)=\det(E')\otimes \mc{O}_X(-T)$, $c_2(E)=c_2$, and since $\lambda$ has positive degree, the destabilising line bundles of $E$ are $L_1(-T)$ and $L_2(-T)$. We now have $\mu(E)=\frac{\deg(L_1)+\deg(L_2)-1}{2}$, and since $-1<\deg(L_1)-\deg(L_2)<1$,
\[ \deg(L_i(-T))=\deg(L_i)-1<\frac{\deg(L_1)+\deg(L_2)}{2}-\frac{1}{2} = \mu(E), \]
$i = 1,2$, so $E$ is stable. Furthermore, 
\[m=\deg\left(L_1(-T)\otimes L_2(-T) \otimes (L_1\otimes L_2\otimes \mc{O}_X(-T))^{-1}\right)=-1.\] 
Note that $E$ is regular on the generic fibre of $\pi$ since $L_1\vert_{\pi^{-1}(b)} \neq L_2\vert_{\pi^{-1}(b)}$ for all $b \in \mathbb{P}^1$. Hence, $h^0(X,\End_0 E(T)) =1$ by Proposition \ref{hopf-reg} (a), implying that every trace-free Higgs field $\phi: E \rightarrow E(T)$ is of the form $a\phi_0$ for some $a \in \mathbb{C}$ and a fixed $\phi_0 \in H^0(X,\End_0 E(T)$. Consequently, $h^0(X,\End_0 E \otimes \mathcal{T}_X) = 2$ and $\phi \wedge \phi = 0$ for all $\phi \in H^0(X,\End_0 E \otimes \mathcal{T}_X)$ giving us a two-dimensional family of trace-free Higgs fields on $E$. 

Finally, let us assume that $c_2 = 0$. Referring to Corollary \ref{hopf-filt-not-stable}, if $E$ admits a stable Higgs, then $E = K \oplus K$ or $K \oplus K(T)$ for some $K \in \Pic(X)$. Let us verify that it does admit stable Higgs fields in both cases. It is enough to do it for $E = \mc{O}_X\oplus \mc{O}_X$ or $\mc{O}_X\oplus \mc{O}_X(-T)$, and we know that stable Higgs fields exist for both by examples \ref{stable-co-Higgs-even} and \ref{stable-co-Higgs-odd}.
\end{proof}

In the case where $c_2=0$, we have a complete description of the moduli spaces of co-Higgs bundles on $X$:
\begin{corollary}\label{moduli-co-Higgs}
Let $\mathcal{M}^{st}_{coH,0}(X)$ be the moduli space of stable trace-free rank-2 co-Higgs bundles with second Chern class $c_2=0$ on the Hopf surface $X$. Then, $\mathcal{M}^{st}_{coH,0}(X)$ is a 5-dimensional variety with two disjoint components $\mathcal{Z}_1$ and $\mathcal{Z}_2$ given by
\[ \mathcal{Z}_1 \simeq \Pic(X) \times \Spec \left(\mathbb{C}[s,z, v,w]/(w(4zv - s^2) - 1)\right) \times \mathbb{P}^1\]
and
\[ \mathcal{Z}_2 \simeq \Pic(X) \times \mathbb{C}^3 \times \mathbb{P}^1.\]
Moreover, $\mathcal{M}^{st}_{coH,0}(X)$ is a codimension-1 subvariety of the moduli space of co-Higgs bundles on $\mathbb{P}^1$.
\end{corollary}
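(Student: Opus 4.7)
The plan is to combine Theorem \ref{stable-co-Higgs}(4) with the explicit descriptions in Examples \ref{stable-co-Higgs-even} and \ref{stable-co-Higgs-odd}. By Theorem \ref{stable-co-Higgs}(4), the underlying bundle of any stable trace-free rank-2 co-Higgs bundle on $X$ with $c_2 = 0$ is isomorphic either to $L \oplus L$ or to $L \oplus L(-T)$ for some $L \in \Pic(X)$. I would define $\mathcal{Z}_1$ as the locus of the first type and $\mathcal{Z}_2$ the second; disjointness is immediate because $L \oplus L$ is strictly $g$-semistable while $L \oplus L(-T)$ is $g$-unstable (using $\deg_g \mathcal{O}_X(T) > 0$, which by Proposition \ref{tracefree} is necessary for stable co-Higgs bundles to exist at all), and these properties are isomorphism invariants of the underlying bundle.

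Within each $\mathcal{Z}_i$, the canonical identifications $\End_0(E) \cong \End_0(E \otimes L^{-1})$ and $\Aut(E) \cong \Aut(E \otimes L^{-1})$ respect stability, so the moduli is a trivial bundle over $\Pic(X)$ whose fibre is the moduli of trace-free stable co-Higgs structures on the fixed bundle $E_0 = \mathcal{O}_X \oplus \mathcal{O}_X$ (for $\mathcal{Z}_1$) or $E_0 = \mathcal{O}_X \oplus \mathcal{O}_X(-T)$ (for $\mathcal{Z}_2$). I would then quote Example \ref{stable-co-Higgs-even}, which computes the GIT quotient by $\Aut(E_0) = \GL(2,\mathbb{C})$ of the stable $\phi_0 \in H^0(X, \End_0(E_0)(T))$ as $\Spec(\mathbb{C}[s,z,v,w]/(w(4zv-s^2)-1))$, and combine it with Lemma \ref{hopf-integrability} (showing every integrable trace-free Higgs field is of the form $(a\phi_0, b\phi_0)$ with $[a:b] \in \mathbb{P}^1$) to obtain the fibre $\Spec(\mathbb{C}[s,z,v,w]/(w(4zv-s^2)-1)) \times \mathbb{P}^1$. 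The analogous argument using Example \ref{stable-co-Higgs-odd} yields the fibre $\mathbb{C}^3 \times \mathbb{P}^1$ for $\mathcal{Z}_2$.

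For the final codimension-1 claim, I would appeal to Remark \ref{pullback-co-Higgs}: stable trace-free Higgs fields on $E_0$ arise as pullbacks of stable Higgs fields on the corresponding bundle $V$ on $\mathbb{P}^1$, and the cited component of the $\mathbb{P}^1$-moduli is 6-dimensional. The 5-dimensional $\mathcal{M}^{st}_{coH,0}(X)$ then sits inside this 6-dimensional $\mathbb{P}^1$-moduli as the codimension-1 subvariety cut out by the integrability condition $\phi \wedge \phi = 0$, which by Lemma \ref{hopf-integrability} translates into the single algebraic condition that the two tangent components of $\phi$ be proportional. The main obstacle will be making this embedding precise: one must reconcile the positive-dimensional $\Pic(X)$-factor on $X$ with the discrete $\Pic(\mathbb{P}^1) = \mathbb{Z}$ on the base, and verify that the codimension-1 condition holds literally (not just fibrewise over $\Pic(X)$) by carefully matching the GIT constructions and dimension counts on both sides.
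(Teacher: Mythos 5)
Your proposal follows exactly the route of the paper, whose proof of this corollary is a one-line citation of Examples \ref{stable-co-Higgs-even} and \ref{stable-co-Higgs-odd}, Theorem \ref{stable-co-Higgs}, and Remark \ref{pullback-co-Higgs}; you simply spell out the reduction to the fixed bundles $\mc{O}_X\oplus\mc{O}_X$ and $\mc{O}_X\oplus\mc{O}_X(-T)$ via tensoring by $L^{-1}$, the disjointness of the two loci, and the dimension count for the codimension-1 claim. The only small slip is attributing $\deg_g\mc{O}_X(T)>0$ to Proposition \ref{tracefree} (which gives a necessary condition in the other direction); it actually follows from the normalisation $\deg_g(\pi^*H)=\deg H$.
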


\begin{proof}
 This is a direct consequence of examples \ref{stable-co-Higgs-even} and \ref{stable-co-Higgs-odd}, Theorem \ref{stable-co-Higgs} and remark \ref{pullback-co-Higgs}.   
\end{proof}

\section{Higgs bundles in the Kodaira dimension 1 case}\label{geq2}

Let $\pi:X\to B$ be a non-K\"ahler principal elliptic surface with base $B$ of genus $g\geq 2$. In this case, $\deg(K_X)=2g-2>0$. Note that for any trace-free stable Higgs bundle $(E, \phi)$ on $X$, the Higgs field $\phi$ is of the form $(\id_E\otimes i)\circ\vp$ for some trace-free $\vp:E \to E\otimes K_X$ with $(E,\vp)$ stable by Proposition \ref{extension}, where $i:K_X \to \mc{T}_X^*$ is the injection given in Proposition \ref{cotangent}. One can easily verify that $((\id_E\otimes i)\circ\vp) \wedge ((\id_E\otimes i)\circ\vp)=0$ for any $\vp \in H^0(\End_0(E)\otimes K_X)$, so it suffices to study trace-free stable pairs of the form $(E, \vp:E \to E \otimes K_X)$, that is, Vafa--Witten pairs.

We divide our analysis in two parts by first studying filtrable bundles and then non-filtrable ones.

\begin{proposition}\label{regular}
Let $E$ be a rank-2 filtrable vector bundle on $X$, and let $K_1$ and $K_2$ be its maximal destabilising bundles. Set $H:=\pi_*(\det(E)^{-1}\otimes K_1\otimes K_2)$.
\begin{enumerate}
    \item[(a)] Suppose that $E$ is regular on the generic fibre of $\pi$. Then,
    \[ H^0(X,\End_0(E)\otimes V) \simeq H^0(B, H \otimes K_B),\]
    and a trace-free Higgs field $\phi: E \rightarrow E \otimes K_X$ is stable if and only if $E$ is.
    
    \item[(b)] Suppose that $E$ is not regular on the generic fibre of $\pi$ so that $K = K_1 = K_2$ is its unique maximal destabilising bundle. We have two cases:
    \begin{enumerate}
        \item[(i)] 
        If $E$ is an extension of line bundles, then $E\simeq K \otimes \pi^*(F)$ for some rank-2 vector bundle $F$ on $B$ that is an extension of $H^{-1}$ by $\mathcal{O}_B$. Furthermore, 
        \[ H^0(X,\End E\otimes K_X)= H^0(B, \End F\otimes K_B)\] 
        so that Higgs fields on $E$ are pullbacks of Higgs fields on $F$.
        
        \item[(ii)] If $E$ is not an extension of line bundles, then 
        \[ h^0(X,\End_0(E)\otimes K_X)\geq h^0(X,\Hom(E,K \otimes K_X)) = h^0(B,H\otimes K_B).\] 
        In fact, $H^0(X,\End_0 E\otimes K_X) \simeq H^0(X,\Hom(E,K \otimes K_X))$ whenever $h^0(B,K_B) = h^0(B,\pi_*(K^{-1} \otimes E) \otimes K_B)$, in which case $K$ is $\phi$-invariant for all Higgs fields $\phi: E \rightarrow E \otimes K_X$, and Higgs fields are stable if and only if $E$ is. 
    \end{enumerate}
\end{enumerate}

\end{proposition}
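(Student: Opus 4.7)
The plan is to recognize that for a non-K\"ahler principal elliptic surface the canonical bundle is pulled back from the base, so Proposition \ref{regular} becomes a direct specialization of Proposition \ref{fitrable}. Concretely, taking determinants in the extension $0 \to \pi^* K_B \to \mc{T}_X^* \to \mc{O}_X \to 0$ of Proposition \ref{cotangent} yields $K_X = \pi^* K_B$, so Proposition \ref{fitrable} applies with $V = K_X$ and $W = K_B$.

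First I would translate each case of Proposition \ref{fitrable} into its $K_X$ form. Part (a) gives immediately the dimension formula $h^0(X, \End_0 E \otimes K_X) = h^0(B, H \otimes K_B)$ together with the $\phi$-invariance of $K_1$ and $K_2$. The equivalence of stabilities I would establish by noting that stability of $E$ automatically makes every pair $(E,\phi)$ stable, while any destabilising bundle of $E$ is $\phi$-invariant by part (a) and hence also destabilises the pair. Part (b)(i) is a direct rewording of Proposition \ref{fitrable}(b)(i): the pair decomposes as the pullback of a pair on $B$. For (b)(ii), I would transfer the inequality $h^0(X, \End_0 E \otimes K_X) \geq h^0(X, \Hom(E, K \otimes K_X))$ from Proposition \ref{fitrable}(b)(ii), and extract the equality $h^0(X, \Hom(E, K \otimes K_X)) = h^0(B, H \otimes K_B)$ via the projection formula $\pi_*\Hom(E, K \otimes K_X) \simeq \pi_*(K^{-1}\otimes E) \otimes H \otimes K_B$ combined with the extension structure $0 \to \mc{O}_B \to \pi_*(K^{-1}\otimes E) \to H^{-1} \otimes \pi_*(\mc{I}_Z) \to 0$ already recorded in the proof of Proposition \ref{fitrable}.

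The most delicate point is the stability claim in (b)(ii). The hypothesis $h^0(B, \pi_*(K^{-1}\otimes E) \otimes K_B) = h^0(B, K_B)$ is precisely what forces any trace-free Higgs field to factor through $\Hom(E, K \otimes K_X)$, so that $K$ is $\phi$-invariant for every such $\phi$. Once this is in hand, the stability equivalence is settled exactly as in part (a): $K$ is the unique maximal destabiliser of $E$ and is now $\phi$-invariant, so any failure of stability of $E$ transfers verbatim to the pair $(E,\phi)$, and conversely stability of $E$ automatically yields stability of $(E,\phi)$.
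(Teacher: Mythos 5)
Your proposal is correct and matches the paper's argument, which simply observes that $K_X = \pi^*K_B$ and cites Proposition \ref{fitrable} with $V = K_X$ and $W = K_B$. The additional detail you supply (the determinant computation from Proposition \ref{cotangent} and the explicit translation of each case) is a sound elaboration of what the paper leaves implicit.
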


\begin{proof}
This is a direct consequence of Proposition \ref{fitrable} with $V = K_X = \pi^* K_B$.
\end{proof}

\begin{remark}
    Using similar arguments to those in the $\ell=1, c_2>0$ case of Proposition \ref{hopf-reg}, we can show that the bounds in (b) (ii) are sharp for a vector bundle $E$ if there is a point $b \in B$ such that $E$ has an allowable elementary modification along $\pi^{-1}(b)$ and $H^0(B, L\otimes K_B(-b))=0,$ where $L$ is as defined in the exact sequence \eqref{non-reg-pushforward}.
\end{remark}

By using the descriptions of the trace-free Higgs fields in the above cases, we can show that the Chern classes of a filtrable rank-2 bundle with a non-trivial Higgs field are only restricted in that they must be in the filtrable range.

\begin{proposition}\label{filt}
Let $E$ be a rank-2 filtrable bundle on $X$ with $c(E)=(1,c_1,c_2)$. Then, there is a stable rank-2 vector bundle $F$ on $X$ with $\det(F)=\det(E)$ and $c_2(F)=c_2(E)$ such that $F$ has a non-trivial trace-free Higgs field.
\end{proposition}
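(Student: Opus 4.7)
The plan is to construct $F$ explicitly as a generic non-trivial extension of line bundles and then to invoke Proposition \ref{regular}(a) to produce the trace-free Higgs field. Since $E$ is filtrable, B\u{a}nic\u{a}--Le Potier ensures that $(\delta,c_2)$ lies in the filtrable range, leaving room for an analogous construction for a new bundle $F$ with the same invariants.

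I would pick $L_1\in\Pic(X)$ lying in a destabilising class optimal with respect to $m(2,\delta)$, with $\deg_g L_1<\deg_g\delta/2$ strictly, and such that $L_1\otimes L_2^{-1}\notin\pi^*\Pic(B)$ where $L_2:=\delta\otimes L_1^{-1}$. The non-pullback condition can be achieved because $\Pic(X)/\pi^*\Pic(B)$ is large (see Section \ref{Line bundles}), and it forces $L_1\vert_{\pi^{-1}(b)}\not\simeq L_2\vert_{\pi^{-1}(b)}$ for all but finitely many $b\in B$. Next, I would choose a zero-dimensional subscheme $Z\subset X$ with $|Z|=c_2-L_1\cdot L_2$ (non-negative by the filtrable range hypothesis) and take $F$ to be a generic non-trivial extension
\[ 0\to L_1\to F\to L_2\otimes \mc{I}_Z\to 0. \]
Serre's construction makes $F$ locally free for generic extension class; by construction $\det F=\delta$ and $c_2(F)=c_2$, and for generic extensions $L_1$ is a maximal destabilising subbundle, so the strict inequality $\deg_g L_1<\deg_g\delta/2$ forces $F$ to be stable. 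The non-pullback condition also ensures that $F$ is regular on the generic fibre of $\pi$.

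To extract a non-trivial trace-free Higgs field, I would apply Proposition \ref{regular}(a), which gives $H^0(X,\End_0 F\otimes K_X)\simeq H^0(B,H\otimes K_B)$, where $H:=\pi_*(\delta^{-1}\otimes K_1\otimes K_2)\in \Pic(B)$ is a line bundle of degree $-|A(F)|$. Crucially, since $L_1$ and $L_2$ restrict distinctly on generic fibres and $H^1$ of any non-trivial degree-$0$ line bundle on an elliptic curve vanishes, the restriction of any extension class to such a fibre is automatically zero; hence $A(F)$ is contained in the finite set of fibres where $L_1\vert_T\simeq L_2\vert_T$. By imposing finitely many linear conditions on the extension class to kill the fibrewise restrictions over this finite set, I can arrange $A(F)=\emptyset$, yielding $H\simeq\mc{O}_B$ and $h^0(B,K_B)=g\geq 2>0$. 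Any non-zero $\phi\in H^0(X,\End_0 F\otimes K_X)$ so produced then gives a stable trace-free Higgs pair $(F,\phi)$ by the same proposition.

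The main obstacle is showing that all of the required conditions (local freeness of $F$, stability, regularity on the generic fibre, and $A(F)=\emptyset$) can be satisfied simultaneously: one must check that the subspace of $\Ext^1(L_2\otimes\mc{I}_Z,L_1)$ cut out by the vanishing conditions over the finitely many ``bad'' fibres contains a class giving a locally free sheaf meeting the open stability and regularity conditions. This reduces to a dimension count for $\Ext^1$ together with Serre's criterion. As a fallback, if $A(F)=\emptyset$ cannot be enforced for the given $(L_1,L_2,Z)$, it suffices to ensure $|A(F)|\leq g-2$, so that $\deg(H\otimes K_B)=2g-2-|A(F)|\geq g$ and Riemann--Roch still guarantees $h^0(B,H\otimes K_B)>0$.
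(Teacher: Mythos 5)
Your overall strategy --- build $F$ as an extension of line bundles arranged to be regular on the generic fibre, then read off the Higgs fields from Proposition \ref{regular}(a) --- is the same as the paper's, but the step you lean on most heavily, namely arranging $A(F)=\emptyset$ so that $H\simeq\mc{O}_B$, is impossible for a \emph{stable} $F$. By the Br\^inz\u{a}nescu--Moraru relation quoted in the paper, $K_1\otimes K_2\otimes\det(F)^{-1}\simeq\mc{O}_X\bigl(-\sum_{T\in A(F)}T\bigr)$, so $\deg K_1+\deg K_2=\deg\delta-\lvert A(F)\rvert$; stability forces $\deg K_i<\mu_g(F)=\tfrac{1}{2}\deg\delta$ for both maximal destabilising bundles, hence $\lvert A(F)\rvert\geq 1$. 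Concretely, in the $Z=\emptyset$ case one has $\pi_*(L_1\otimes L_2^{-1})=0$ and $\Ext^1(L_2,L_1)\simeq H^0(B,R^1\pi_*(L_1\otimes L_2^{-1}))$, a torsion module supported exactly on the ``bad'' fibres; killing the restriction of the extension class over every such fibre kills the class itself, so your $F$ degenerates to $L_1\oplus L_2$, which is never stable. Your fallback $\lvert A(F)\rvert\leq g-2$ inherits the same problem when $g=2$ (it again demands $A(F)=\emptyset$), and for $\lvert A(F)\rvert=g-1$ the Euler-characteristic bound $h^0\geq\deg-g+1$ gives nothing.

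The fix, which is what the paper does, is to aim for $\lvert A(F)\rvert=1$: after twisting by a constant-factor-of-automorphy bundle $L_a$ so that both destabilising degrees fall strictly below the slope, one arranges a single non-split fibre over a point $b\in B$ (via an elementary modification along $\pi^{-1}(b)$ when $Z\neq\emptyset$, or by choosing an extension class supported at a single point when $Z=\emptyset$ and $\Sigma_1\cdot\Sigma_2\neq 0$), so that $H=\mc{O}_B(-b)$ and $h^0(B,K_B(-b))=h^1(B,\mc{O}_B(b))=g-1\geq 1$ by Riemann--Roch and Serre duality --- an exact count rather than an Euler-characteristic estimate, which is precisely what saves the $g=2$ case. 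You would also need to treat the remaining case $Z=\emptyset$, $\Sigma_1\cdot\Sigma_2=0$, where $R^1\pi_*(L_1\otimes L_2^{-1})$ may vanish, every extension of $L_2$ by $L_1$ splits, and no choice of extension class helps; there the paper takes $F$ to be a twist of the pullback of a stable rank-2 bundle $W$ on $B$ and produces the Higgs field from $h^0(B,\End_0 W\otimes K_B)=h^1(B,\End_0 W)=3(g-1)>0$.
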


\begin{proof}
Since $E$ is filtrable, there are line bundles $L_1$ and $L_2$ and a finite set of points $Z$ on $X$ (counting multiplicity) such that $E$ fits into an exact sequence of the form
\[ 0 \rightarrow L_1 \rightarrow E \rightarrow L_2\otimes \mc{I}_Z \rightarrow 0. \]
In particular, $\det E = L_1 \otimes L_2$ and $c_2(E) = c_1(L_1) \cdot c_1(L_2) + \vert Z \vert$. Moreover, if $\Sigma_1$ and $\Sigma_2$ are the section of $J(X)$ corresponding to $L_1$ and $L_2$, respectively, then $(\Sigma_1 + \Sigma_2)$ is the spectral curve of $E$ when $Z = \emptyset$. Recall that $\Sigma_1 \cdot \Sigma_2 \neq 0$ if and only if $c_1(L_1 \otimes L_2^{-1})^2 \neq 0$.
We split the proof into three cases: 
\begin{itemize}\item[(i)] $Z \neq \emptyset$,
\item[(ii)]$Z = \emptyset$ and $\Sigma_1 \cdot \Sigma_2 \neq 0$, and %$\Sigma_1 \neq \Sigma_2$, and
\item[(iii)]$Z = \emptyset$ and $\Sigma_1 \cdot \Sigma_2 = 0$. %$\Sigma_1 = \Sigma_2$.
\end{itemize}

We first consider the case where $Z\neq \emptyset$. Let $L_a$ be a line bundle on $X$ given by a constant factor of automorphy $a$ such that $$-1<\deg(L_1)-\deg(L_2)+2\deg(L_a)<0.$$ Since the degree function maps line bundles with constant factor of automorphy surjectively onto $\mathbb{R}$, such an $L_a$ always exists. Note that $\deg(L_1)-\deg(L_2)+2\deg(L_a)$ is not an integer. Therefore, $L_1 \otimes L_2^{-1} \otimes L_a^2$ is not the pullback of a line bundle on $B$ so that its restriction to at least one fibre of $\pi$ is not trivial. In other words, $(L_1\otimes L_a)\rvert_{\pi^{-1}(b)} \not\simeq (L_2\otimes L_a^{-1})\rvert_{\pi^{-1}(b)}$ for some point $b \in B$. Define 
\[ F':=(L_1\otimes L_a\otimes \pi^*(\mc{O}_B(b)))\oplus (L_2\otimes L_a^{-1}).\] 
Then, $\det F' = \det E(\pi^{-1}(b))$ and $c_2(F') = c_1(L_1) \cdot c_1(L_2)$ because $c_1(\mathcal{O}_X(\pi^{-1}(b)))$ is a torsion element of $H^2(X,\mathbb{Z})$.
Let $\lambda$ be any line bundle on $\pi^{-1}(b)$ such that $\deg(\lambda)=\lvert Z\rvert > 0$, and define $F$ to be the elementary modification $$ 0 \rightarrow F \rightarrow F' \rightarrow \iota_{b,*}\lambda \rightarrow 0,$$
where $\iota_b$ is the inclusion of $\pi^{-1}(b)$ into $X$. Then $\det(F)=\det(E)$, $c_2(F)=c_2(E)$, and $F$ has maximal destabilising bundles $L_1\otimes L_a$ and $L_2\otimes L_a^{-1}\otimes \pi^*(\mc{O}_B(-b))$. Therefore, $F$ fits into the exact sequences \begin{align*} 0 \rightarrow L_1\otimes L_a \rightarrow &F \rightarrow L_2\otimes L_a^{-1} \otimes \mc{I}_Y \rightarrow 0, \\ 0 \rightarrow L_2\otimes L_a^{-1}\otimes \pi^*(\mc{O}_B(-b)) \rightarrow &F \rightarrow L_1\otimes L_a\otimes \pi^*(\mc{O}_B(b)) \otimes \mc{I}_Y \rightarrow 0\end{align*}
for some finite set of points $Y$ on $X$ (counting multiplicity) with $\lvert Y\rvert = \vert Z \vert >0$.  By assumption, 
\[ \deg(L_2\otimes L_a^{-1} \otimes \pi^*(\mc{O}_B(-b))) < \deg(L_1\otimes L_a)<\deg(L_2\otimes L_a^{-1})\] 
so that
\[ \mu(L_1 \otimes L_a) = \deg(L_1 \otimes L_a) < \frac{1}{2}(\deg L_1 + \deg L_2) = \mu(F)\]
and
\[ \mu(L_2\otimes L_a^{-1}\otimes \pi^*(\mc{O}_B(-b))) = \deg(L_2\otimes L_a^{-1}\otimes \pi^*(\mc{O}_B(-b)))<\frac{1}{2}(\deg L_1 + \deg L_2) = \mu(F).\]  
Hence, $F$ is a stable bundle. Finally, by Proposition \ref{regular}, the trace-free Higgs fields on $F$ are parameterised by $H^0(B, K_B(-b))$ and by Riemann-Roch, $$h^0(B,K_B(-b))=h^0(B,\mc{O}_B(b))+ (g-2) = 1+(g-2) = g-1 \geq 1,$$ 
since $g=g(B) \geq 2$, so $F$ has a non-zero trace-free Higgs field, proving case (i).

Let us now assume that $Z=\emptyset$ and that the spectral curve of $E$ has non-trivial self-intersection. Then, $c_2(E) = c_1(L_1) \cdot c_1(L_2)$ and $c_1(L_1\otimes L_2^{-1})^2\neq 0$. We again choose $L_a$ to be a line bundle on $X$ given by a constant factor of automorphy $a$ such that $$-1<\deg(L_1)-\deg(L_2)+2\deg(L_a)<0.$$
Extensions $$ 0 \rightarrow L_1\otimes L_a \rightarrow F \rightarrow L_2\otimes L_a^{-1}\rightarrow 0$$
are parameterised by 
\[H^1(X,\Hom(L_2\otimes L_a^{-1},L_1\otimes L_a))\cong H^0(B, R^1\pi_*(L_2^{-1}\otimes L_1\otimes L_a^2)),\] 
since $\pi_*(L_2^{-1}\otimes L_1\otimes L_a^2) = 0$, where $R^1\pi_*(L_2^{-1}\otimes L_a\otimes L_a^2)$ is a torsion sheaf supported on points $b \in B$ such that $$\Sigma_{L_1\otimes L_a}\cap \Sigma_{L_2\otimes L_a^{-1}}\cap \pi^{-1}(b)\neq \emptyset.$$
(By the assumption that $c_1(L_2^{-1}\otimes L_1)^2\neq 0$, there is at least one such point in the support.) In addition, for any choice of section $s \in H^0(B, R^1\pi_*(L_2^{-1}\otimes L_1\otimes L_a^2))$, the corresponding extension has maximal destabilising bundles $L_1\otimes L_a$ and $L_2\otimes L_a^{-1}\otimes \pi^*\mc{O}_B(-D_s)$, where $D_s$ is the divisor on which $s$ is supported. Therefore, if we choose $s$ so that it is supported on a single point $b$, the corresponding extension $F$ will have maximal destabilising bundles $L_1\otimes L_a$ and $L_2\otimes L_a^{-1}\otimes \pi^*\mc{O}_B(-b)$. The bundle $F$ is stable and has the same determinant and second Chern class as $E$. Furthermore, the trace-free Higgs fields on $F$ are parameterised by $H^0(B,K_B(-b))\neq 0.$

Finally, let us assume that $Z=\emptyset$ and $\Sigma_1 \cdot \Sigma_2=0$. Therefore, $L_1 \simeq L_2 \otimes L_a \otimes \pi^* H$ for some line bundle $H \in \Pic(B)$ and $a\in \mathbb{C}^*$ with $L_a$ the line bundle with constant automorphy $a$ so that $\det(E)=L_1^2\otimes \pi^*H^{-1}\otimes L_a^{-1}$ and $c_2(E) = c_1(L_1)^2$. Let $W$ be a rank-2 stable bundle on $B$ with determinant $H^{-1}$ and set $F = L_1 \otimes L_b\otimes \pi^*W$, where $b$ is a complex number satisfying $b^2=a^{-1}$. Then, $F$ is a stable bundle on $X$ with $\det(F) = \det(E)$ and $c_2(F) = c_2(E)$. Since $W$ is a stable bundle on $B$, it is simple, implying that $h^0(B,\End_0 V) = 0$. By Riemann-Roch, we then have $h^0(B,\End_0 W \otimes K_B) = h^1(B,\End_0 W) = 3(1-g) > 0$. Hence, $W$ must have a non-trivial trace-free Higgs field, which lifts to a non-trivial trace-free Higgs field on $F$, proving (iii).
\end{proof}

\begin{corollary}\label{cor-filt}
Let $\delta \in \Pic(X)$ and $c_2 \in \mathbb{Z}$ be such that $\Delta(2,\delta,c_2) \geq  m(2,c_1)$. Then, some bundles in $\mc{M}_{2,\delta,c_2}(X)$ admit non-trivial Higgs fields.
\end{corollary}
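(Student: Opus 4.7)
The plan is to combine the Bănică--Le Potier existence criterion for filtrable rank-2 bundles with Proposition \ref{filt}. The hypothesis $\Delta(2,\delta,c_2)\geq m(2,\delta)$ is exactly the numerical condition guaranteeing that there exists a filtrable rank-2 vector bundle $E$ on $X$ with $\det(E)=\delta$ and $c_2(E)=c_2$. So first I would invoke this criterion to produce such a filtrable $E$.

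Next, I would apply Proposition \ref{filt} directly to $E$. That proposition yields a stable rank-2 vector bundle $F$ on $X$ with the same determinant $\delta$ and the same second Chern class $c_2$ as $E$, and crucially $F$ admits a non-trivial trace-free Higgs field $\varphi:F\to F\otimes K_X$. Since $F$ is stable, it defines a point of the moduli space $\mc{M}_{2,\delta,c_2}(X)$.

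Finally, I need to promote the Vafa--Witten pair $(F,\varphi)$ to a Higgs bundle in the original sense (i.e.\ with values in $\mc{T}_X^*$). This is immediate from Proposition \ref{cotangent}: the inclusion $i:K_X\hookrightarrow \mc{T}_X^*$ provides the non-trivial trace-free Higgs field $(\id_F\otimes i)\circ\varphi:F\to F\otimes \mc{T}_X^*$, which automatically satisfies the integrability condition since $K_X$ is a line bundle (and more generally because this procedure is discussed at the start of Section \ref{geq2}). Thus $F\in \mc{M}_{2,\delta,c_2}(X)$ admits a non-trivial Higgs field, as required.

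There is no real obstacle here; the corollary is essentially a packaging of Proposition \ref{filt} together with the Bănică--Le Potier existence theorem. The only thing worth double-checking is that the stable bundle $F$ produced in the proof of Proposition \ref{filt} is indeed constructed so as to preserve both $\det(E)$ and $c_2(E)$ in all three cases (the $Z\neq\emptyset$ case uses an elementary modification with $\deg\lambda=|Z|$, the self-intersecting spectral curve case uses an extension with the same numerical invariants, and the parallel spectral curves case uses a twisted pullback of a stable rank-2 bundle on $B$), which is indeed the case as stated in the proposition.
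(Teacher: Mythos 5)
Your proposal is correct and follows exactly the route the paper intends: the hypothesis $\Delta(2,\delta,c_2)\geq m(2,\delta)$ is the B\u{a}nic\u{a}--Le Potier criterion producing a filtrable bundle with the given invariants, and Proposition \ref{filt} then supplies a stable bundle with the same invariants carrying a non-trivial trace-free Higgs field. Your extra remark about promoting the $K_X$-valued field to a $\mc{T}_X^*$-valued one via the inclusion from Proposition \ref{cotangent} matches the reduction already made at the start of Section \ref{geq2}, so nothing further is needed.
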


\begin{remark}
Corollary \ref{cor-filt} tells us that there exist stable rank-2 Higgs bundles on non-K\"ahler elliptic surfaces with non-trivial Higgs field whose underlying bundles are not pulled back from $B$. This is in contrast with what happens on positive elliptic fibrations. Indeed, in \cite{BiswasVerbitsky}, the authors prove that on a positive principal elliptic fibration of dimension $\geq 3$ over a compact K\"ahler orbifold $M$, any stable Higgs bundle is, up to tensoring by a line bundle, the pullback of a Higgs bundle on $M$.
\end{remark}

Finally, we consider the case when the bundles are not filtrable.

\begin{proposition}
Suppose that $g \geq 2$ and $E$ is a rank-2 regular non-filtrable bundle such that $\Delta(E) < \frac{1}{4}(g-1)$. 
Then, $E$ admits non-trivial Higgs fields. In particular, if the spectral curve of $E$ is unramified, then $h^0(X,\End_0 E \otimes K_X) = g-1$.
\end{proposition}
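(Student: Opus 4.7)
The plan is to directly apply Proposition \ref{non-filt-reg} and then do a standard Riemann--Roch computation on $B$. Since $E$ is regular (hence regular on every fibre) and non-filtrable, Proposition \ref{non-filt-reg} gives a line bundle $N=\pi_*(\End_0 E)$ on $B$ with $N^2\simeq \mathcal{O}_B(-R)$ and $\deg N=-4\Delta(E)$, together with the identification
\[
H^0(X,\End_0 E\otimes K_X)\;\simeq\;H^0(B,N\otimes K_B),
\]
since $K_X=\pi^*K_B$. So the question reduces entirely to a computation on the base curve.

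Next I would apply Riemann--Roch on $B$ to $N\otimes K_B$, whose degree is $2g-2-4\Delta(E)$, to get
\[
h^0(B,N\otimes K_B)=h^1(B,N\otimes K_B)+(g-1)-4\Delta(E)\;\geq\;(g-1)-4\Delta(E),
\]
where the inequality uses that $h^1\geq 0$. The hypothesis $\Delta(E)<(g-1)/4$ forces the right-hand side to be strictly positive, which produces the desired non-trivial Higgs field.

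For the ``in particular'' statement, assume the spectral curve is unramified, so $R=0$ and $N^2\simeq\mathcal{O}_B$; in particular $\deg N=0$, which also forces $\Delta(E)=0$. The key remaining step is to rule out $N\simeq \mathcal{O}_B$. This follows from stability: since $E$ is non-filtrable it is stable with respect to every Gauduchon metric, hence simple, so $h^0(X,\End_0 E)=0$; by Proposition \ref{non-filt-reg} (applied with $W=\mathcal{O}_B$) this equals $h^0(B,N)$, so $N$ cannot be the trivial bundle. Thus $N$ is a non-trivial $2$-torsion line bundle, and in particular $h^0(B,N^{-1})=h^0(B,N)=0$. Serre duality then gives $h^1(B,N\otimes K_B)=0$, and the Riemann--Roch calculation above collapses to $h^0(B,N\otimes K_B)=g-1$, as claimed.

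The computation is almost entirely mechanical given Proposition \ref{non-filt-reg}; the only genuinely substantive point is the vanishing $h^0(B,N)=0$, which requires invoking stability (equivalently, simplicity) of $E$ to exclude the trivial case $N=\mathcal{O}_B$. This is the one place where the non-filtrability hypothesis is used in an essential way beyond merely ensuring that $\pi_*(\End_0 E)$ is a line bundle.
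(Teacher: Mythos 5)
Your proof is correct and follows essentially the same route as the paper: apply Proposition \ref{non-filt-reg} to reduce to $h^0(B,N\otimes K_B)$ with $\deg N=-4\Delta(E)$, conclude non-vanishing by Riemann--Roch from $\deg(N\otimes K_B)>g-1$, and in the unramified case use simplicity of the stable bundle $E$ to see that $N$ is a non-trivial $2$-torsion line bundle, whence Serre duality and Riemann--Roch give exactly $g-1$. The only cosmetic difference is that the paper applies Serre duality first (computing $h^1(B,N^{-1})$) while you apply Riemann--Roch to $N\otimes K_B$ directly and use Serre duality to kill the $h^1$ term; these are the same computation.
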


\begin{proof}
Since $E$ is regular and non-filtrable, by Proposition \ref{non-filt-reg}, 
\[h^0(X,\End_0(E) \otimes K_X) = h^0(B,N \otimes K_B),\] 
where $N = \pi_*(\End_0 E)$ and $\deg N = -4\Delta(E)$. Therefore, if $\Delta(E) < \frac{1}{4}(g-1)$, we have $\deg(N \otimes K_B) > g-1$, implying that $h^0(B,N \otimes K_B) \neq 0$.

Suppose that $R= 0$ so that $N^2 \simeq \mathcal{O}_B$. Then, $N = \lambda_0 \neq \mathcal{O}_B$ for some $\lambda_0 \in \Pic^0(B)$ since $h^0(B,N) = h^0(X,\End_0 E) = 0$ by stability of $E$. Moreover, $\lambda_0^2 \simeq \mathcal{O}_B$ and $N^{-1} \simeq N$. Therefore, by Serre Duality and Riemann-Roch,
\[ h^0(X,\End_0 E \otimes K_X) = h^0(B,N \otimes K_B) = h^1(B,N^{-1}) = g-1. \]
\end{proof}

When the Chern classes are chosen in the non-filtrable range, we can say more:

\begin{proposition}\label{non-filt}
Let $\delta \in \Pic(X)$ and $c_2 \in \mathbb{Z}$ be such that $-e_\delta/4 \leq \Delta(2,\delta,c_2) < m(2,c_1)$. Furthermore, suppose that either
\begin{itemize}
    \item $\Delta(2,\delta,c_2)=-e_\delta/4$ and $e_\delta>1-g$ when $g \geq 2$, or
    \item $\Delta(2,\delta,c_2) > -e_\delta/4$ and $e_\delta > 2-g$ when $g \geq 3$.
\end{itemize}
Then, some bundles in $\mc{M}_{2,\delta,c_2}(X)$ admit non-trivial Higgs fields. 
\end{proposition}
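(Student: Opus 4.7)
The plan is to construct, in each case, a specific bundle $E \in \mc{M}_{2,\delta,c_2}(X)$ for which $H^0(X, \End_0(E) \otimes K_X)$ can be computed explicitly by pushing down to $B$ and applying Riemann--Roch. Since the invariants lie in the non-filtrable range, every bundle in $\mc{M}_{2,\delta,c_2}(X)$ is non-filtrable and hence automatically stable, so producing a single $E$ with non-zero Higgs space suffices. The stated bounds on $e_\delta$ are precisely the degree thresholds needed for Riemann--Roch to force non-vanishing.

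For Case 1, where $\Delta(2,\delta,c_2) = -e_\delta/4$, take $E$ to be a minimal bundle with the prescribed invariants; it exists by \cite[Theorem 4.5]{BrMor}. Any allowable elementary modification would strictly decrease $\Delta$, contradicting the minimality bound, so $E$ has no jumps, and Proposition \ref{smooth-spectral-curve} then implies $E$ is regular on every fibre. Proposition \ref{non-filt-reg} gives
\[ H^0(X, \End_0(E) \otimes K_X) \simeq H^0(B, N \otimes K_B), \]
with $N = \pi_*(\End_0 E)$ of degree $-4\Delta(E) = e_\delta$. The hypothesis $e_\delta > 1 - g$ (equivalently $e_\delta \geq 2 - g$ since $e_\delta \in \Z$) gives $\deg(N \otimes K_B) \geq g$, so Riemann--Roch yields $h^0(B, N \otimes K_B) \geq 1$.

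For Case 2, where $\Delta(2,\delta,c_2) > -e_\delta/4$ and $g \geq 3$, fix a point $b_0 \in B$ and set $M := 2\Delta(2,\delta,c_2) + e_\delta/2$, which is a positive integer by the parity relation $c_1(\delta)^2 \equiv 2 e_\delta \pmod 4$ forced by the existence of minimal bundles. Since $e_\delta$ is insensitive to twisting $\delta$ by $\pi^* \Pic(B)$ (such twists restrict trivially to fibres and leave the spectral section unchanged), we pick a minimal bundle $\bar{E}$ with $\det \bar{E} = \delta \otimes \pi^* \mc{O}_B(-b_0)$ and $c_2(\bar{E}) = c_2 - M$; it satisfies $\Delta(\bar{E}) = -e_\delta/4$ and is regular on every fibre. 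For any $\mu \in \Pic^M(\pi^{-1}(b_0))$, a surjection $\bar{E}|_{\pi^{-1}(b_0)} \twoheadrightarrow \mu$ exists because a generic section of $\mu \otimes \bar{E}|_{\pi^{-1}(b_0)}^\vee$ is nowhere-vanishing on an elliptic curve when both summands of this rank-$2$ bundle have positive degree. Let $E$ be the corresponding elementary modification $0 \to \bar{E} \to E \to \iota_{b_0,*} \mu \to 0$. Then $\det E = \delta$, $c_2(E) = c_2$, and $\Delta(E) < m(2, \delta)$ forces $E$ to be non-filtrable, so $E \in \mc{M}_{2,\delta,c_2}(X)$. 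Moreover $E$ has a single length-$1$ jump of height $M$ at $b_0$, so the invariant $s$ of Corollary \ref{non-filt-jumps} equals $1$. Applying that corollary and the projection formula gives
\[ H^0(X, \End_0(E) \otimes K_X) \simeq H^0(B, N \otimes K_B(-b_0)), \]
with $\deg N = e_\delta$. The hypothesis $e_\delta > 2 - g$ gives $\deg(N \otimes K_B(-b_0)) = e_\delta + 2g - 3 \geq g$, so Riemann--Roch again produces a non-zero section.

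The main obstacle lies in the construction in Case 2: one must verify that the positive-degree elementary modification produces a non-filtrable bundle with a single length-$1$ jump (hence $s = 1$) and that the resulting $(\det E, c_2(E))$ matches the prescribed $(\delta, c_2)$. These steps are essentially bookkeeping, resting on the invariance of $e_\delta$ under $\pi^* \Pic(B)$-twists and on the elementary curve-theoretic existence of nowhere-vanishing sections of rank-$2$ bundles of positive degree on an elliptic curve, as outlined in Section \ref{elementary}.
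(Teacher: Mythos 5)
Your proof follows essentially the same route as the paper: in the minimal case $\Delta=-e_\delta/4$ the bundle is regular with no jumps, so Proposition \ref{non-filt-reg} and Riemann--Roch give a section of $N\otimes K_B$; in the non-minimal case one produces a bundle with a single length-one jump so that $s=1$, $\deg N=e_\delta-1$, and Riemann--Roch applies again (the paper merely asserts such a bundle exists, whereas you construct it, which is a welcome addition). One bookkeeping slip in Case 2: with the paper's conventions the modified bundle is the \emph{subsheaf}, so you want $0\to E\to\bar E\to\iota_{b_0,*}\mu\to 0$ with $\det\bar E=\delta\otimes\pi^*\mc{O}_B(b_0)$; as written, your sequence $0\to\bar E\to E\to\iota_{b_0,*}\mu\to 0$ gives $c_2(E)=c_2-2M$ rather than $c_2$. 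This is easily repaired and does not affect the argument.
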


\begin{proof}
Set $\Delta = \Delta(2,c_1,c_2)$. Since $\Delta$ is, by assumption, such that $-e_\delta/4 \leq \Delta < m(2,c_1)$, every bundle $E$ with $\Delta(E) = \Delta$ is non-filtrable. If $\Delta = -e_\delta/4$, then $E$ is regular and referring to Proposition \ref{non-filt-reg}, 
\[h^0(X,\End_0(E) \otimes K_X) = h^0(B,N \otimes K_B),\] 
where $N = \pi_*(\End_0 E)$ and $\deg N = -4\Delta = e_\delta$. Therefore, if $e_\delta > 1-g$, we have $\deg(N \otimes K_B) > g-1$, implying that $h^0(B,N \otimes K_B) \neq 0$.
Whereas if $\Delta > -e_\delta/4$, then $\Delta = -e_\delta/4 + k/2$ for some positive integer $k>0$. This means, in particular, that $E$ can have a single jump that has multiplicity $k$ and length 1, in which case $\deg N = e_\delta - 1$ by Corollary \ref{non-filt-jumps} (because the bundle $\bar{E}$ obtained from $E$ by removing its jump is regular with $\Delta(\bar{E}) = -e_\delta/4$ so that $\deg(\pi_*(\End_0 \bar{E})) = e_\delta$ by Proposition \ref{non-filt-reg}).
Therefore, if $e_\delta > 2-g$, we have $\deg(N \otimes K_B) > g-1$ and $h^0(B,N \otimes K_B) \neq 0$. 
\end{proof}

We end the paper with an application of our analysis of Higgs bundles on non-K\"ahler elliptic surfaces to the smoothness of moduli spaces of bundles on them.

\begin{theorem}\label{not-smooth}
Let $\pi:X\to B$ be a non-K\"ahler elliptic surface with base curve $B$ of genus at least 2. Let $\mathcal{M}_{2,\delta,c_2}(X)$ be the moduli space of rank-2 stable bundles on $X$ with determinant $\delta$ and second Chern class $c_2$. Suppose that either
\begin{itemize}
    \item $\Delta(2,c_1,c_2) \geq m(2,c_1)$ when $g \geq 2$, or
    \item $-e_\delta/4 = \Delta(2,\delta,c_2) < m(2,c_1)$ and $e_\delta>1-g$ when $g \geq 2$, or
    \item $-e_\delta/4 < \Delta(2,\delta,c_2) < m(2,c_1)$ and $e_\delta > 2-g$ when $g \geq 3$.
    \end{itemize}
Then, $\mc{M}_{2,\delta,c_2}(X)$ is not smooth as a ringed space when $\Delta(2,\delta,c_2) \neq 0$. In particular, if $c_1(\delta)$ and $c_2$ are in the filtrable range and $\mc{M}_{2,\delta,c_2}(X)$ has positive dimension, then it is not smooth as a ringed space.
\end{theorem}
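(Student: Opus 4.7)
The plan is to reduce the non-smoothness of $\mc{M}_{2,\delta,c_2}(X)$ at a stable point $[E]$ to the non-vanishing of $H^0(X,\End_0 E \otimes K_X)$, and then apply the existence results already established in this section. First I would invoke the standard deformation theory of moduli of simple vector bundles on a smooth compact complex surface: at a stable point $[E]$, the Zariski tangent space is $H^1(X,\End_0 E)$ and the obstructions to extending infinitesimal deformations live in $H^2(X,\End_0 E)$. Consequently, $[E]$ is a smooth point of $\mc{M}_{2,\delta,c_2}(X)$ as a ringed space if and only if this obstruction space vanishes. Serre duality on the surface $X$ identifies
\[ H^2(X,\End_0 E) \;\cong\; H^0(X,\End_0 E \otimes K_X)^*, \]
so smoothness at $[E]$ is equivalent to the absence of non-trivial trace-free Higgs fields (equivalently, by Proposition \ref{extension} and the computation at the start of Section \ref{geq2}, of non-trivial trace-free Vafa--Witten pairs) on $E$.

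With this reformulation, the theorem reduces to exhibiting, in each of the three cases, a stable bundle $E \in \mc{M}_{2,\delta,c_2}(X)$ carrying a non-zero section of $\End_0 E \otimes K_X$. The first bullet, $\Delta(2,\delta,c_2) \geq m(2,c_1)$, is precisely the filtrable range, and Corollary \ref{cor-filt} delivers such a stable bundle directly. The second and third bullets cover the two subcases of the non-filtrable range $-e_\delta/4 \leq \Delta(2,\delta,c_2) < m(2,c_1)$, and Proposition \ref{non-filt} constructs stable bundles with non-trivial trace-free Higgs fields in exactly these numerical regimes. In each case the resulting obstruction class in $H^2(X,\End_0 E)$ is non-zero, so $\mc{M}_{2,\delta,c_2}(X)$ fails to be smooth at $[E]$ as a ringed space.

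The final sentence of the theorem is then immediate from the first bullet: the phrase "$c_1(\delta)$ and $c_2$ are in the filtrable range" is synonymous with $\Delta(2,\delta,c_2) \geq m(2,c_1)$, so Corollary \ref{cor-filt} again produces a singular point provided the moduli space has positive dimension. The role of the hypothesis $\Delta(2,\delta,c_2) \neq 0$ is simply to guarantee that the moduli space has genuine moduli; when $\Delta$ vanishes the expected dimension collapses and the non-smoothness assertion would be vacuous.

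There is no serious obstacle in the argument itself, since it is essentially an assembly of previously proved results. The only point that merits care is verifying that the bundles produced by Corollary \ref{cor-filt} and Proposition \ref{non-filt} are honestly stable (not merely polystable), so that the deformation-theoretic description of the tangent and obstruction spaces applies at $[E]$; this was already built into the statements of those results, where the constructed bundles were shown to be stable with respect to any Gauduchon metric on $X$.
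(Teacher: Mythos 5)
Your argument is correct and follows essentially the same route as the paper: both reduce non-smoothness at $[E]$ to $H^2(X,\End_0 E)\cong H^0(X,\End_0 E\otimes K_X)^*\neq 0$ (you via Serre duality on the obstruction space, the paper via the computation $h^1=h^0(\End_0 E\otimes K_X)-\chi(\End_0 E)$ for stable $E$), and both then invoke Corollary \ref{cor-filt} for the filtrable range and Proposition \ref{non-filt} for the two non-filtrable cases. The only cosmetic difference is your speculative aside on the role of $\Delta\neq 0$, which is not part of the core argument.
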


\begin{remark}
    Note that even if the moduli space is not smooth as a ringed space, the underlying analytic variety may still be smooth. Examples of this phenomenon appear in \cite[Section 3]{Teleman}.
\end{remark}
\begin{proof}
By a standard deformation theory argument, the Zariski tangent space of $\mathcal{M}_{2,\delta,c_2}(X)$ at a point corresponding a vector bundle $E$ is isomorphic to $H^1(X, \End_0(E)).$ Since $E$ is by assumption stable, we have \begin{align*}h^1(X, \End_0(E))&=h^2(X, \End_0(E))-\chi(X, \End_0(E))\\
&=h^0(X, \End_0(E)\otimes K_X)-\chi(X, \End_0(E)),\end{align*}
so $\mathcal{M}_{2,\delta,c_2}(X)$ has a singular point at $E$ if and only if $E$ has a non-zero trace-free Higgs field. The result then follows immediately from Corollary \ref{cor-filt} and Proposition \ref{non-filt}.
\end{proof}

%%%%%%%%%%%%%%%
%%%%%%%%%%%%%%%
%%%%%%%%%%%%%%%

While the results we have presented so far can be used to determine whether a given bundle admits a non-trivial Higgs field, getting a general description for all bundles relies on having an explicit description of the base curve $B$ and of $\Hom(B,T)$. We nonetheless have enough information to completely describe what happens when the genus of $B$ is 2. In this case, $-2 \leq e_\delta \leq 0$.

Suppose that $B$ has genus $g=2$. Let $\iota_B : B \rightarrow \mathbb{P}^1$ be the double cover of $B$ and $p_1,\dots,p_6$ be its ramification points. Let $\delta \in \Pic(B)$ and $c_2 \in \mathbb{Z}$ be such that $-e_\delta/4 \leq \Delta(2,\Delta,c_2) < m(2,c_1)$. Let $E$ be a rank-2 vector bundle with determinant $\delta$ and second Chern class $c_2$. If $R$ is the ramification divisor of the bisection of the spectral curve of $E$ and $N = \pi_*(\End_0 E)$, we have the following:

\begin{example}
Let us first consider the case where $e_\delta = -2$. In this case, $\Delta_0 = \frac{1}{2}$ and $\Delta = \frac{1}{2}(1+k)$, $k \geq 0$. If $k=0$, then the bundles have no jumps and $\deg N =-2$ with $N^{-2} = \mathcal{O}_B(R)$ so that $R$ is then an effective divisor of degree 4. Moreover, $\deg(N \otimes K_B) = 0$ so that $h^0(B,N \otimes K_B) \neq 0$ if and only if $N = K_B^{-1}$, in which case, $\mathcal{O}_B(R) = K_B^2$ and $\iota_B(R) = R$. Note that if $\iota_B(R) \neq R$, then the vector bundle does not admit a Higgs field. Conversely, if $R$ is invariant under the involution, then $\mathcal{O}_B(R) = K_B^2$, implying that $N = K_B^{-1} \otimes \lambda_0$, where $\lambda_0$ is a half-period, and $h^0(B,N \otimes K_B) \neq 0$ if and only if $\lambda_0 = \mathcal{O}_B$. Note that since $\lambda_0$ is a half-period, then $\lambda_0 = \mathcal{O}_B(p_i - p_j)$ for some $1 \leq i,j \leq 6$. Moreover, $K_B = \mathcal{O}_B(2p_i)$ so that
$N = \mathcal{O}_B(-p_i-p_j)$, and $h^0(B,N \otimes K_B) \neq 0$ if and only if $i=j$.

Let us now assume that $k > 0$. If the vector bundle has no jumps, then $\deg N = -2(k+1)$ and $\deg(N \otimes K_B) = -2k < 0$. Therefore, $h^0(B,N \otimes K_B) = 0$, implying that there are no non-trivial Higgs fields in this case. 
Let us finally consider the case where $k > 0$ and the bundle has jumps. In this case, $\deg N = -2 -(2m + s)$ with $0 \leq m \leq k$ and $s \geq 1$ so that $\deg(N \otimes K_B) = -(2m+s) < 0$ and $h^0(B,N \otimes K_B) = 0$. Hence, there are no non-trivial Higgs fields when $k>0$. 

We see that when $e_\delta = -2$ and $k>0$, there are no non-trivial Higgs fields, implying that the moduli spaces are always smooth in this case. 
\end{example}

\begin{example}
Let us now consider the case where $e_\delta = -1$. In this case, $\Delta_0 = \frac{1}{4}$ and $\Delta = \frac{1}{4}(1+2k)$, $k \geq 0$. If $k=0$, then the bundles have no jumps and $\deg N =-1$ with $N^{-2} = \mathcal{O}_B(R)$ so that $R$ is then an effective divisor of degree 2. Moreover, $\deg(N \otimes K_B) = 1$ and $h^0(B,N \otimes K_B) \neq 0$ if and only if $h^1(B,N^{-1}) \neq 0$, in which case, $N^{-1} = \mathcal{O}_B(b)$ for some $b \in B$. Moreover, $\mathcal{O}_B(R) = N^{-2} = \mathcal{O}_B(2b)$. But since $R$ is an effective divisor of degree 2 without double points, it can only be linearly equivalent to the double point $2b$ if $b=p_i$ for some $1 \leq i \leq 6$ (because, otherwise, the linear system of $R$ is 0-dimensional) so that $R \sim 2p_i \sim K_B$ and $\mathcal{O}_B(R) = K_B$. Note that if $R \not\sim K_B$, then the vector bundle does not admit a Higgs field. Conversely, if $R \sim K_B$, then $R \sim 2p_i$ for any $1 \leq i \leq 6$, implying that $N^{-1} = \mathcal{O}(p_i) \otimes \lambda_0$, where $\lambda_0$ is a half-period. Again, $h^0(B,N \otimes K_B) \neq 0$ if and only if $N^{-1} = \mathcal{O}_B(b)$ for some $b \in B$, in which case $\mathcal{O}_B(b-p_i) \simeq \lambda_0$ and $\mathcal{O}_B(2b-2p_i) = \mathcal{O}_B$. But this means that $2b \sim 2p_i \sim K_B$ so that $b=p_j$ for some $1 \leq j \leq 6$. We therefore see that $h^0(B,N \otimes K_B) \neq 0$ if and only if $R \sim K_B$ and $N = \mathcal{O}_B(-p_j)$ for some $1 \leq j \leq 6$.

Let us now assume that $k > 0$. If the vector bundle has no jumps, then $\deg N = -(2k+1)$ and $\deg(N \otimes K_B) = 1-2k < 0$. Therefore, $h^0(B,N \otimes K_B) = 0$, implying that there are no non-trivial Higgs fields in this case. 
Let us finally consider the case where $k > 0$ and the bundle has jumps. In this case, $\deg N = -1 -(2m + s)$ with $0 \leq m \leq k$ and $s \geq 1$ so that $\deg(N \otimes K_B) = 1-(2m+s) \leq 0$. If $\deg(N \otimes K_B) = 0$, then $2m+s = 1$ so that $m=0$ and $s=1$, implying that the bundle has only one jump with length dividing $k$ and each allowable elementary modification in the sequence preserves the height except for the final one. 
Then, $\deg N = -2$ so that $N^{-1} = \mathcal{O}_B(p+p')$ for some $p,p' \in B$. Moreover, $h^0(B,N \otimes K_B) \neq 0$ if and only if $N = K_B^{-1}$ if and only if $p' = \iota_B(p)$. Also, $N' = N(b)$, where $b \in B$ is such that the jump occurs over the fibre $\pi^{-1}(b)$. Note that $K_B \simeq \mathcal{O}(b+\iota(b))$. Therefore, if $N = K_B^{-1}$, then $N' = K_B^{-1}(b)$ and $\mathcal{O}_B(R) = K_B^2(-2b) \simeq \mathcal{O}_B(2\iota_B(b))$. But $R$ is an effective divisor of degree 2. So the only way that $R \sim 2\iota_B(b)$ is if $\iota_B(b) = p_i$ for some $1 \leq i \leq 6$. Hence, $R \sim K_B$ so that $R = p + \iota_B(p)$ with $\iota_B(p) \neq p$, and $b$ is a ramification point of $B$. In particular, if $R \not\sim K_B$ or if the fibre of the jump is not over a ramification point of $B$, then there are no non-trivial Higgs fields.
Conversely, if the fibre of the jump is over the ramification point $p_i$ and $R = p + \iota_B(p)$ with $\iota_B(p) \neq p$, then $N^{-2}(-2p_i) = \mathcal{O}_B(R) \simeq K_B$. But $\mathcal{O}(2p_i) \simeq K_B$ so that $N^{-2} = K_B^2$ and $N = K_B^{-1} \otimes \lambda_0$ for some half-period $\lambda_0$. Suppose that $\lambda_0 = \mathcal{O}_B(p_j - p_k)$ for some $1 \leq j,k \leq 6$. Then, since $K_B \simeq \mathcal{O}_B(2p_j)$, we see that $N=\mathcal{O}_B(-p_j-p_k)$ and 
$h^0(B,N \otimes K_B) \neq 0$ if and only if $j=k$. 

We see that when $e_\delta = -1$ and $k>0$, there are non-trivial Higgs fields only when $R \simeq K_B \simeq N^{-2}$ and the bundle has a unique jump of dividing $k$ over a ramification point of $B$.  
\end{example}

\begin{example}
We finally consider the case where $e_\delta = 0$. This time, $\Delta_0 = 0$ and $\Delta = \frac{k}{2}$, $k \geq 0$. If $k=0$, then the bundles have no jumps and $\deg N =0$ with $N^{-2} = \mathcal{O}_B(R)$ so that $R=0$ since $R$ is then an effective divisor of degree 0. This means that the spectral covers of the bundles are unramified. Moreover, $N$ is a non-trivial half-period so that $N = \mathcal{O}_B(p_i - p_j)$ with $i \neq j$. Moreover, $\deg(N \otimes K_B) = 2$ so that $h^0(B,N \otimes K_B) \neq 0$. Therefore, any vector bundle with $\Delta = 0$ admits a non-trivial Higgs field.

Let us now assume that $k > 0$. If the vector bundle has no jumps, then $\deg N = -2k$ and $\deg(N \otimes K_B) = 2-2k \leq 0$. In particular, $\deg(N \otimes K_B) < 0$ when $k > 1$ so that $h^0(B,N \otimes K_B) = 0$, implying that there are no non-trivial Higgs fields in this case. But when $k=1$, we have that $\deg(N \otimes K_B) = 0$ so that $h^0(B,N \otimes K_B) \neq 0$ if and only if $N = K_B^{-1}$, in which case, $\mathcal{O}_B(R) = K_B^2$ so that $\iota_B(R) = R$. In other words, if the bundle admits a non-trivial Higgs field, then the ramification divisor $R$  of its spectral curve is invariant under the involution $\iota_B$ on $B$. This means, in particular, that if the ramification divisor of the spectral cover is not invariant under the involution $\iota_B$, then the bundle does not admits non-trivial Higgs fields. Conversely, if $\iota_B(R) = R$, then $R = K_B^2$ since $R$ is an effective divisor of degree $8\Delta = 4$. Therefore, $N^{-2} = K_B^2$, implying that $N = K_B^{-1} \otimes \lambda_0$ for some half-period $\lambda_0 \in \Pic^0(B)$, and $h^0(B,N \otimes K_B) \neq 0$ if and only if $\lambda_0 = \mathcal{O}_B$.

We end with the case where $k > 0$ and the bundle has jumps. In this case, $\deg N = -(2m + s)$ with $0 \leq m \leq k$ and $s \geq 1$ so that $\deg(N \otimes K_B) = 2-(2m+s) \leq 1$. If $m$ and $s$ are such that $\deg(N \otimes K_B) < 0$, then $h^0(B,N \otimes K_B) = 0$ and there are no non-trivial Higgs fields. We therefore only have to consider the case where $\deg(N \otimes K_B) = 0,1$. We first note that $\deg(N \otimes K_B) = 1$ if and only if $m=0$ and $s=1$, implying that the bundle has only one jump with length dividing $k$ and each allowable elementary modification in the sequence preserves the height except for the final one. Moreover, $N = N'(-b)$, where $b \in B$ is such that the jump occurs over the fibre $\pi^{-1}(b)$ and $N'$ is a non-trivial half-period on $B$. In particular, $\mathcal{O}_B(R) = (N')^{-2} = \mathcal{O}_B$ so that $R = 0$ and the spectral curve of the bundle is unramified. If $h^0(B,N \otimes K_B) \neq 0$, then $h^0(B,N^{-1}) = h^1(B,N^{-1}) = h^0(B,N \otimes K_B) \neq 0$ and so $N^{-1} = \mathcal{O}(b')$ for some $b' \in B$. Thus, $N' = \mathcal{O}_B(b-b')$ so that $b' \neq b$. Conversely, if the spectral cover is unramified so that $R = 0$ and $N' = \mathcal{O}_B(p_i - p_j)$, $i \neq j$, we can add a jump at $p_i$, which will give $N = \mathcal{O}_B(-p_j)$ so that $N \otimes K_B = \mathcal{O}_B(p_j)$ since $p_j$ is a ramification points, implying that $h^0(B,N \otimes K_B) \neq 0$.

Finally, if $\deg(N \otimes K_B) = 0$, then $2m+s = 2$ so that $m=0$ and $s=2$, implying that the bundle has 2 jumps. Again, $R = 0$ and $N'$ is a non-trivial half-period. Then, $h^0(B,N \otimes K_B) \neq 0$ if and only if $N = K_B^{-1}$. Moreover, $N = N'(-b-b')$ where $b,b' \in B$ corresponds to the fibres over which the jumps occur. Hence, $N' = K_B^{-1} \otimes \mathcal{O}_B(b+b')$, implying that $\mathcal{O}_B(b+b') \neq K_B$, which means that $b' \neq \iota_B(b)$. In other words, the jumps correspond to fibres over points in $B$ that are not related by the involution. Conversely, if $R = 0$ and $N' = \mathcal{O}_B(p_i - p_j)$, $i \neq j$, then if we introduce jumps at $p_i$ and $p_j$, the resulting bundle will have $N = \mathcal{O}_B(-2p_j) = K_B^{-1}$, implying that $h^0(B,N \otimes K_B) \neq 0$.

We see that when $e_\delta = 0$, there always exist bundle that admits non-trivial Higgs fields, implying that the moduli spaces are always singular in this case. 
\end{example}

\begin{remark}
Referring to Theorem 3 in \cite{Kani}, minimal genus 2 covers of $T$ of degree $d \geq 2$ always exist, implying that there exists a principal elliptic surfaces $\pi:X\to B$ with $g(B)=2$ and a line bundle $\delta \in \Pic(X)$ such that $m(2,\delta)=d/4$ for any such $d$.
\end{remark}

As a direct consequence of the above examples, we obtain:

\begin{theorem}
Suppose that $B$ has genus $g=2$. Let $\delta \in \Pic(B)$, $c_2 \in \mathbb{Z}$, and $\mathcal{M}_{2,\delta,c_2}(X)$ be the moduli space of rank-2 stable bundles on $X$ with determinant $\delta$ and second Chern class $c_2$.
Suppose that $-e_\delta/4 \leq \Delta(2,\Delta,c_2) < m(2,c_1)$ so that every bundle in $\mathcal{M}_{2,\delta,c_2}(X)$ is non-filtrable. Then:
\begin{enumerate}
    \item If $e_\delta = -2$ and $\Delta(2,\delta,c_2) > 1/2$, then $\mc{M}_{2,\delta,c_2}(X)$ is smooth. 
    \item  If $e_\delta = 0$, then $\mc{M}_{2,\delta,c_2}(X)$ is singular as a ringed space.
\end{enumerate}

\end{theorem}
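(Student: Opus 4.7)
The approach is to apply the deformation-theoretic criterion of Theorem \ref{not-smooth}: for a stable bundle $E$ on $X$, the moduli space $\mc{M}_{2,\delta,c_2}(X)$ is smooth at $[E]$ as a ringed space if and only if $h^0(X,\End_0 E\otimes K_X)=0$. Because the hypotheses place the Chern classes in the non-filtrable range, every $E\in\mc{M}_{2,\delta,c_2}(X)$ is non-filtrable, so Proposition \ref{non-filt-reg} and Corollary \ref{non-filt-jumps} identify this obstruction with $h^0(B,N\otimes K_B)$, where $N:=\pi_*(\End_0 E)$ is a line bundle on the genus-two base $B$ whose degree is fixed by the jump structure of $E$. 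Since $\deg K_B=2$, the whole question reduces to a degree and Riemann--Roch computation on $B$.

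For part (1), assume $e_\delta=-2$ and write $\Delta:=\Delta(2,\delta,c_2)=(1+k)/2$ with $k\geq 1$. I would split into two cases according to whether $E$ has jumps. If $E$ has no jumps, then $E$ is regular by Proposition \ref{smooth-spectral-curve}, and Proposition \ref{non-filt-reg} gives $\deg N=-4\Delta=-2(1+k)$, whence $\deg(N\otimes K_B)=-2k<0$. Otherwise, let $\bar E$ denote the bundle obtained from $E$ by removing all its jumps; then $\Delta(\bar E)=-e_\delta/4=1/2$, so $\deg \pi_*(\End_0\bar E)=-2$, and Corollary \ref{non-filt-jumps} yields $\deg N=-2-\sum_i s_i$ with $\sum_i s_i\geq 1$. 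Hence $\deg(N\otimes K_B)=-\sum_i s_i <0$. In every case $h^0(B,N\otimes K_B)=0$, so no stable $E$ admits a non-trivial trace-free Higgs field and $\mc{M}_{2,\delta,c_2}(X)$ is smooth.

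For part (2), assume $e_\delta=0$. It suffices to exhibit, for each $c_2$ in the stated range, a single stable $E\in\mc{M}_{2,\delta,c_2}(X)$ with $h^0(B,N\otimes K_B)\neq 0$. When $\Delta=0$, Proposition \ref{non-filt-reg} gives $N^2\simeq\mc{O}_B$, and stability forces $N\not\simeq\mc{O}_B$, so $N$ is a non-trivial $2$-torsion line bundle on $B$; Riemann--Roch then gives $h^0(B,N\otimes K_B)\geq 1$. For $\Delta=k/2>0$, I would use the elementary-modification and spectral-cover machinery of Sections \ref{Spectral}--\ref{elementary} to construct bundles whose jumps are positioned at fibres over Weierstrass points of $B$ (or with carefully chosen heights) so that the resulting $N$ satisfies $N\simeq K_B^{-1}$; the six Weierstrass points $p_1,\dots,p_6$ of $B$ provide exactly enough freedom to force the ramification divisor $R$ of the spectral cover to satisfy $\mc{O}_B(R)\simeq K_B^2$, which is the condition for such an $N$ to exist. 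The chief obstacle is a careful combinatorial bookkeeping: in each arithmetic regime of $k$ and of the jump partition (by height, length, and multiplicity), one must verify both that the prescribed $N$ is realised by an actual bundle with determinant $\delta$ and that this bundle is stable. This casework is exactly what is carried out in the three examples immediately preceding the theorem, which I would invoke as the bulk of the proof.
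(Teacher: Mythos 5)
Your proposal takes exactly the paper's route: smoothness is reduced via Theorem \ref{not-smooth} to the vanishing of $h^0(B,N\otimes K_B)$ for $N=\pi_*(\End_0 E)$, whose degree is computed from Proposition \ref{non-filt-reg} and Corollary \ref{non-filt-jumps}, and the remaining casework for $e_\delta=0$ is precisely the content of the three examples preceding the theorem, which the paper itself invokes as the proof. The only slip is in part (1): for a bundle with jumps, $\Delta(\bar E)$ need not equal $-e_\delta/4$ --- removing jumps only guarantees $\Delta(\bar E)\geq 1/2$, hence $\deg\pi_*(\End_0 \bar E)\leq -2$ --- but since this inequality points in the favourable direction you still get $\deg(N\otimes K_B)\leq -\sum_i s_i<0$, so your conclusion is unaffected.
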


\bibliographystyle{alpha}
\bibliography{ref}
\end{document}